\def \C {\mathbb{C}}
\def \W {{\bf W}}
\def \E {{\bf E}}
\def \h {{\bf h}}
\def \ddd {\mathcal{D}}
\def \dps {\displaystyle}
\def \fol {{\mathcal F}}
\def \F {{\mathcal F}}
\def \G {{\mathcal G}}
\def \folt {\widetilde{\mathcal{F}}}
\def \M {{\bf M}}
\def \N {\mathbb{ N}}
\def \nn {\mathcal{N}}
\def \oo {\mathcal{O}}
\def \aa {\mathcal{A}}
\def \P {\mathbb{P}}
\def \pn {\mathbb{P}^n}
\def \Q {\mathbb{ Q}}
\def \R {\mathbb{R}}
\def \zz {\mathbb{Z}}
\def \sing {{\rm Sing}}
\def \tt {\mathcal{T}}
\def \Z {{\bf Z}}
\newtheorem{proposition}{Proposition}[section]
\newtheorem{definition}[proposition]{Definition}
\newtheorem{corollary}[proposition]{Corollary}
\newtheorem{lema}[proposition]{Lemma}
\newtheorem{theorem}[proposition]{Theorem}
\newtheorem{example}[proposition]{Example}
\newtheorem{remark}[proposition]{Remark}
\begin{document}

\title[A   Van den Essen type formula  and applications]{ A higher-dimensional  Van den Essen type formula for projective foliations and applications}

\author{Maur\'icio Corr\^ea, Gilcione Nonato Costa}
\address{Maur\'icio Corr\^ea  \\ 
Universit\`a degli Studi di Bari, 
Via E. Orabona 4, I-70125, Bari, Italy
}
\email[M. Corr\^ea]{mauricio.barros@uniba.it,mauriciomatufmg@gmail.com } 
\address{Gilcione Nonato Costa \\
ICEx - UFMG \\
Departamento de Matem\'atica \\
Av. Ant\^onio Carlos 6627 \\
30123-970 Belo Horizonte MG, Brazil} \email{gilcione@mat.ufmg.br}

\subjclass[2010]{Primary 32S65 - 58K45} \keywords{ Holomorphic
foliations - non-isolated singularities - Milnor number -
Desingularization}

\begin{abstract}
Let  $\mathcal{F}$ be a one-dimensional holomorphic foliation on $\mathbb{P}^n$  such that 
$\W\subset \text{Sing}(\mathcal{F})$, where 
$\W$ is a smooth complete intersection variety.  We determine and compute the variation of the Milnor number \( \mu(\mathcal{F}, \W) \) under blowups, which depends on the vanishing order of the pullback foliation along the exceptional divisor, as well as on numerical and topological invariants of \( \W \).
 This represents a higher-dimensional version of Van den Essen's formula for projective foliations of dimension one. As an application, we obtain a lower bound for the Milnor number of the foliation. Also, we use this formula to show that for a foliation on $\mathbb{P}^n$ that is singular along a smooth curve, there exists a finite number of blow-ups with centers on smooth curves such that the induced foliation has multiplicity equal to 1
  and that for generic points of the curves in the final stage, the singularities are elementary. Moreover, we obtain a bound on the maximum number of blow-ups needed to resolve the foliation, depending on the numerical and topological invariants of the curve.
\end{abstract}

\dedicatory{In memory of Celso dos Santos Viana}

\maketitle

\section{Introduction}

\par 
In his celebrated work \cite{Bott}, Bott provided a method to compute residues of global holomorphic vector fields along non-degenerate and non-isolated singularities. In general, determining the residue for degenerate singularities in the case of meromorphic vector fields is challenging, except for isolated singularities, as shown by Baum and Bott in \cite{bb}. 

F. Bracci and T. Suwa established in \cite{BS} that Baum-Bott indices continuously vary under smooth deformations of holomorphic foliations, and in particular, such residues/indices can be computed via deformation. Therefore, following Bracci and Suwa, we can, in particular, define and compute the Milnor number of a one-dimensional foliation along a smooth subvariety of high codimension contained in the singular set as follows. Consider a one-dimensional holomorphic foliation $\mathcal{F}_0$ on a complex manifold $\M_0$ induced by a global section of $v_0\in H^0(X, T\M_0\otimes L)$, for some fixed line bundle $L$, such that its singular locus $\text{Sing}(\mathcal{F}_0)$ contains a smooth subvariety $\W_0$ of codimension $d \geq 2$. Now, 
let $\fol_t$ be a generic holomorphic
deformation of $\fol_0$, for $t\in D(0,\epsilon)$, with $\epsilon$ sufficiently small
such that $\fol_t$ is induced by a section of $v_t\in H^0(X, T\M_0\otimes L)$,
$\dps\lim_{t\to0}v_t=v_0$ and
$\sing(\fol_t)=\{p_1^t,\ldots,p_{m_t}^{t}\}$, where each $p_j^t$ is an
isolated closed point. Then  \textit{Milnor number} $\mu(\fol_0, \W_0)$  of $\fol_0$ along  $\W_0$ is given by 
$$\mu(\fol_0,\W_0)=\dps\sum_{\dps\lim_{t\to0} p_j^t\in\W_0}\mu(\fol_t,p_j^t),$$ where $\mu(\fol_t,p_j^t)$ is the usual Milnor number for isolated singularities. In \cite{AG}, we computed the Milnor number for the case where $\W_0$ is a non-dicritical component. Here, we extend that result to the case where $\W_0$ is a dicritical component of $\text{Sing}(\fol_0)$.

It is natural to ask how the Milnor number varies under certain maps that modify the foliation. In \cite{AGR},  the authors show that   $\mu(\fol_0,\W_0)$  on a three-dimensional manifold $\M_0$ remains invariant under topological equivalences $C^1$. In this work, our focus is on determining the \textit{Milnor number} $\mu(\fol_0, \W_0)$ and computing its variation under\textit{ blow-ups} for foliations on projective spaces.

\par In order to present our first results we need to fix the following notation: $\W_0:=Z(f_1,\ldots,f_d)$ will be a smooth complete intersection variety on $\M_0=\pn$ where $f_j$ is a reduced polynomial with $k_j=\deg(f_j)$ for $j=1,\ldots,d$.
Let $\tt_{\W_0}$ and $\nn:=\nn_{\W_0}$ be tangent and normal bundles of
$\W_0$ in $\M_0$ and with their total Chern classes $c(\tt_{|W_0})=\sum
\tau_i^{(d)}\h^i$ and $c(\nn)=\sum \sigma_i^{(d)}\h^i$,
respectively, where $\h$ is the hyperplane class of $\pn$. Consider 
$$\mathcal{W}^{(d)}_{\delta}:=\mathcal{W}^{(d)}_{\delta}(k_1,\ldots,k_{d})=\sum_{i_{1}+\ldots+i_{d}=\delta}k^{i_{1}}_{1}\ldots k^{i_{d}}_{d},$$
 the complete symmetric function
of degree $\delta$ in $d$ variables at the multi-indices
$(k_1,\ldots,k_d)$. 
Now, let $\pi_1:\M_1\to\M_0$ be the blowup of $\M_0=\pn$ along $\W_0$, with the exceptional
divisor $\E_1=\pi_1^{-1}\big(\W_{0}\big)$. The kernel
$\nu(\fol_0,\W_0,\varphi_a)$ is defined as follows
$$
\nu(\fol_0,W_0,\varphi_a)=-\deg(\W_0)\sum_{|a|=0}^{n-d}\sum_{m=0}^{n-d-|a|}(-1)^{\delta_{|a|}^{m}}\frac{\varphi_{a}^{(m)}(\ell)}{m!}(k-1)^{m}\sigma_{a_1}^{(d)}\tau_{a_2}^{(d)}\mathcal{W}_{\delta_{|a|}^{m}}^{(d)},
$$
where $k=\deg(\fol_0)$, $a=(a_1,a_2)\in\zz^2$, $|a|=a_1+a_2\ge0$,
$\delta_{|a|}^{m}:=n-d-|a|-m$; and $\ell$ is given by
$$\ell=\left\{\begin{array}{ll}
              m_{\E_1}\big(\pi_1^*\fol_0\big),& \mbox{ if } {\W_0 \mbox{ is non-dicritical}}\cr
              m_{\E_1}\big(\pi_1^*\fol_{0}\big)-1,&\mbox{ if } {\W_0 \mbox{ is dicritical}},
              \end{array}\right. $$
where   $m_{\E_1}\big(\pi_1^*\fol_0\big)$  denotes the vanishing order of the pullback foliation  $\pi_1^*\fol_0$  at $\E_1$ and the function $ \varphi_{a}(x):=x^{n-d-a_2}(1+x)^{d-a_1}$ with $\varphi_{a}^{(m)}(x)=\dps\frac{d^m}{dx^m}\varphi_{a}(x).$

\begin{theorem}\label{theorem1}
Let $\fol_0$ be a holomorphic foliation by curves on $\pn$, with
$n\geq 3$, of degree $k$. Suppose that the singular set of $\fol_0$ is the disjoint union of a smooth scheme-theoretic
complete intersection subvariety $\W_0$ of pure codimension $d\geq
2$, and closed points $p_1,\ldots,p_s$. Consider the blow-up
$\pi_1:\M_1\to\M_0$ centered on $\W_0$ being
$\E_1=\pi_1^{-1}\big(\W_0\big)$ the exceptional divisor and $\fol_1$ the strict transform foliation obtained from $\fol_{0}$ via
$\pi_1$.  Then
\begin{enumerate}
 \item [(a)] $\displaystyle\sum_{i=1}^{s}\mu(\fol_0,p_i)=\dps\sum_{i=0}^{n}k^i+\nu(\fol_0,W_0,\varphi_a)-N(\fol_0,\aa_{\W_0}),$
 \item [(b)] $\mu(\fol_0,\W_0) = -\nu(\fol_0,\W_0,\varphi_a)+N(\fol_0,\aa_{\W_0})\ge -\nu(\fol_0,\W_0,\varphi_a),$
 \item[(c)] $\mu(\fol_1,\bigcup_{i}\W_i^{(1)})=\mu(\fol_0,\W_0)+\nu(\fol_0,W_0,\vartheta_a)$
\end{enumerate}
where $N(\fol_0,\aa_{\W_0})$  is the number  of embedding closed points associated to $\W_0$, counted with 
multiplicities,  $\W_i^{(1)}\subset \E_1$ is each connected component of $\sing(\fol_1)$ and
$$\vartheta_{a}(x)=\varphi_{a}(x)+
x^{n-d-a_2-1}\big(1-(1+x)^{d-a_1}\big).$$
\end{theorem}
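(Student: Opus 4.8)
The plan is to anchor the argument on the deformation definition of the Milnor number together with the conservation of the total number of singularities. Let $\fol_t$ be a generic deformation of $\fol_0$ into a foliation with isolated singularities, induced by a section of $T\pn\otimes\oo(k-1)$. Since $\fol_0$ has degree $k$, the total number of singularities of $\fol_t$, counted with Milnor multiplicity, is the top Chern number
$$\int_{\pn}c_n\big(T\pn\otimes\oo(k-1)\big)=\sum_{i=0}^{n}k^{i}.$$
As $t\to 0$ each isolated singularity either limits into $\W_0$ or to one of the points $p_1,\dots,p_s$; by definition the first group contributes $\mu(\fol_0,\W_0)$ and the second $\sum_i\mu(\fol_0,p_i)$, so that
$$\sum_{i=1}^{s}\mu(\fol_0,p_i)+\mu(\fol_0,\W_0)=\sum_{i=0}^{n}k^{i}.$$
Consequently (a) and (b) are equivalent, and both reduce to identifying $\mu(\fol_0,\W_0)$ with $-\nu(\fol_0,\W_0,\varphi_a)+N(\fol_0,\aa_{\W_0})$; the inequality in (b) is then immediate, since $N(\fol_0,\aa_{\W_0})\ge 0$. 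Thus the whole problem reduces to computing the localized contribution $\mu(\fol_0,\W_0)$.

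To evaluate this localized residue I would pass to the blow-up $\pi_1\colon\M_1\to\M_0$ along $\W_0$, where the cluster of singularities collapsing onto $\W_0$ is spread over the exceptional divisor $\E_1=\P(\nn)$, a $\mathbb{P}^{d-1}$-bundle over $\W_0$. By the Bracci--Suwa localization principle, $\mu(\fol_0,\W_0)$ is then computed as a Chern number of the twisted tangent bundle of $\M_1$ concentrated on $\E_1$. Here the vanishing order along $\E_1$ is decisive: $\pi_1^*\fol_0$ vanishes to order $m_{\E_1}(\pi_1^*\fol_0)$ on $\E_1$, and the effective twist governing the residue is $\oo(\ell)$, with $\ell$ exactly as defined --- the dicritical case lowering the order by one, because dividing out the reduced equation of $\E_1$ absorbs one unit of vanishing. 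Restricting to $\E_1$ and splitting $T\M_1|_{\E_1}$ through the conormal sequence of $\E_1$ and the relative tangent sequence $0\to T_{\E_1/\W_0}\to T\E_1\to\pi_1^*\tt_{\W_0}\to 0$, the integrand factors into a vertical (fiber) part carrying the tautological class $\zeta$ of $\P(\nn)$ and a horizontal part carrying $c(\tt_{\W_0})$ and $c(\nn)$.

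The heart of the computation is the integration of this class over $\P(\nn)$. I would integrate first along the $\mathbb{P}^{d-1}$ fibers: pushing the powers of $\zeta$ down to $\W_0$ produces the Segre classes $s_\delta(\nn)$, and for the complete intersection $\W_0=Z(f_1,\dots,f_d)$ one has $c(\nn)=\prod_{j=1}^{d}(1+k_j\h)$, whence $s_\delta(\nn)=(-1)^\delta\mathcal{W}_\delta^{(d)}\h^\delta$. This is precisely the origin of the factor $(-1)^{\delta_{|a|}^{m}}\mathcal{W}_{\delta_{|a|}^{m}}^{(d)}$ and of the truncation $0\le\delta_{|a|}^{m}=n-d-|a|-m$. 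The remaining integration over $\W_0$ supplies $\deg(\W_0)$ together with the Chern monomials $\sigma_{a_1}^{(d)}\tau_{a_2}^{(d)}$. Finally, the twist by $\oo(k-1)$, whose Chern class is $(k-1)\h$, superposed on the exceptional twist $\oo(\ell)$ forces a finite Taylor expansion of the top Chern class about $x=\ell$ with increment $(k-1)$, producing $\sum_m\frac{\varphi_a^{(m)}(\ell)}{m!}(k-1)^m$, where $\varphi_a(x)=x^{n-d-a_2}(1+x)^{d-a_1}$ records the splitting of the twisted tangent bundle into its $n-d-a_2$ factors along $\W_0$ and its $d-a_1$ factors transverse to it. Assembling the three contributions reproduces $-\nu(\fol_0,\W_0,\varphi_a)$; the discrepancy between this idealized count and the genuine scheme structure of $\singf$ along $\W_0$ is exactly the embedded-point correction $+N(\fol_0,\aa_{\W_0})$, which completes (b), hence (a).

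For (c) I would rerun the identical localization on $\M_1$ for the strict transform $\fol_1$, whose singular set $\bigcup_i\W_i^{(1)}$ lies on $\E_1$. The strict transform is obtained from $\pi_1^*\fol_0$ by dividing out the vanishing along $\E_1$, which shifts the effective twist and hence replaces the top Chern integrand by one differing only in the conormal factor; the same fiber integration then shows that the new residue equals the old one plus the term built from $\vartheta_a(x)=\varphi_a(x)+x^{n-d-a_2-1}\big(1-(1+x)^{d-a_1}\big)$, the extra summand being precisely the effect of removing one unit of vanishing from that factor. This yields $\mu(\fol_1,\bigcup_i\W_i^{(1)})-\mu(\fol_0,\W_0)=\nu(\fol_0,\W_0,\vartheta_a)$. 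The main obstacle throughout is the Chern/Segre bookkeeping on $\P(\nn)$ under the repeated twists: one must control how the tautological class, the Grothendieck relation and the derivatives of $\varphi_a$ interact so that the fiber push-forward lands on the complete symmetric functions with the correct signs and truncation, and simultaneously verify that the dicritical shift in $\ell$ and the embedded-point term are the only sources of discrepancy with the exact singular scheme.
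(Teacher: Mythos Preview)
Your conservation-of-singularities opening is correct and matches the paper's implicit use of the total count $\sum_{i=0}^n k^i$, so (a) and (b) are indeed equivalent. But from that point on your route diverges from the paper's and leaves a real gap.

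The paper does \emph{not} compute $\mu(\fol_0,\W_0)$ by a direct Bracci--Suwa localization and Chern/Segre integration on $\P(\nn)$ as you outline. Instead, its engine is a \emph{special} deformation (Lemma~3.1 in the paper): one deforms $\fol_0$ not to a foliation with only isolated singularities, but to a family $\fol_t$ that still has $\W_0$ in its singular set, with the additional property that $\fol_t$ is \emph{special} along $\W_0$ (so that after blowing up, $\widetilde{\fol_t}$ has only isolated singularities on $\E_1$) and that the order of annulment along $\E_1$ is exactly $\ell$. This reduces everything to the special case, where Theorem~2.8 (proved in the authors' earlier paper) supplies the two numbers $\nn(\widetilde{\fol_t},\M_1)=\sum k^i+\nu(\fol_0,\W_0,\varphi_a-\psi_a)$ and $\nn(\widetilde{\fol_t},\E_1)=-\nu(\fol_0,\W_0,\psi_a)$. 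Subtracting these and accounting for the isolated singularities of $\fol_t$ off $\W_0$ that drift onto $\W_0$ as $t\to 0$ is precisely what defines $N(\fol_0,\aa_{\W_0})$ and yields (a). Part (c) then falls out of the identity $\varphi_a-\psi_a=\vartheta_a$ together with $N(\fol_0,A_{\W_0})=N(\fol_1,A_{\E_1})$, which holds because $\pi_1$ is a biholomorphism off $\E_1$.

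The gap in your proposal is the treatment of $N(\fol_0,\aa_{\W_0})$. You invoke it only as ``the discrepancy between this idealized count and the genuine scheme structure,'' but in the paper this number has a concrete definition that requires the special deformation: it counts isolated singularities of $\fol_t$ lying \emph{off} $\W_0$ (which is still singular for $\fol_t$) that nevertheless collapse onto $\W_0$. Under your fully generic deformation there is no such separation---every singularity limiting to $\W_0$ is lumped into $\mu(\fol_0,\W_0)$, and you have no mechanism to split that into the ``kernel'' piece $-\nu(\fol_0,\W_0,\varphi_a)$ and the embedded piece $N$. Your direct localization sketch, even if the Segre bookkeeping were carried out, would at best recover the Baum--Bott residue along $\W_0$ for a foliation whose singular scheme is exactly $\W_0$; it does not see the embedded points, and asserting that the leftover is ``exactly'' $N$ is the statement to be proved, not an observation. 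The paper's special deformation is precisely the device that makes this separation rigorous and simultaneously forces the correct value of $\ell$ in both the dicritical and non-dicritical cases.
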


This is a higher-dimensional version of \textit{Van den Essen's formula}; see \cite[Theorem 1.3]{Van-den-Essen} and \cite[Appendix]{Mattei-Moussu}. Theorem \ref{theorem1} provides a lower bound for the Milnor number 
$$\mu(\fol_0,\W_0) \geq   -\nu(\fol_0,\W_0,\varphi_a)=\deg(\W_0)\sum_{|a|=0}^{n-d}\sum_{m=0}^{n-d-|a|}(-1)^{\delta_{|a|}^{m}}\frac{\varphi_{a}^{(m)}(\ell)}{m!}(k-1)^{m}\sigma_{a_1}^{(d)}\tau_{a_2}^{(d)}\mathcal{W}_{\delta_{|a|}^{m}}^{(d)}$$
and also  calculates its variation under blow-up. 

Item (c) remains valid even when $\W_0$  has dimension zero.

In the next part of this paper, we explore a holomorphic foliation $\fol_0$ defined on $\mathbb{P}^n$, where its singular set contains a smooth regular curve $\W_0$. We start with $\M_0 = \mathbb{P}^n$ and consider a sequence of blow-ups $\pi_j: \M_j \to \M_{j-1}$ centered, for each $j \geq 1$, along a component $\W_{j-1} \subset \text{Sing}(\mathcal{F}_{j-1})$, where $\mathcal{F}_j$ is the strict transform obtained from $\mathcal{F}_{j-1}$ under $\pi_j$, and $\E_j$ is the exceptional divisor. In short, we denote this sequence by $\{\pi_j, \M_j, \W_j, \mathcal{F}_j, \E_j\}.$
Generally, when we restrict the singular set of $\mathcal{F}_j$ to the exceptional divisor $\E_j$, it comprises new components and potentially isolated closed points. However, these new curves can be categorized into two main types: those that are homeomorphic to $\W_0$ and those that are homeomorphic to $\mathbb{P}^{n-2}$. As a result, we present the following theorem, which efficiently determines $\mu(\fol_i, \W_i)$ under natural hypotheses for $i\ge0$.
\begin{theorem}\label{theoremB} Let $\fol_0$ be a one-dimensional
holomorphic foliation defined on $\P^n$  such
that $\W_0\subset\sing(\fol_0)$ where $\W_0$ is a smooth curve of degree $\deg(\W_0)$ and Euler characteristic $\chi(\W_0)$.

If there is a blow-up sequence $\{\pi_j,\M_j,\W_j,\fol_j,\E_j\}$ where $\W_j$ is homeomorphic to $\W_{j-1}$ and $\pi_j(\W_j) = \W_{j-1}$ for $j \geq 1$, then
\begin{enumerate}
    \item [(a)] \begin{align*}\displaystyle   
    \mu(\fol_j,\W_j)=& (1+\ell_{j+1})^{n-1}\chi(\W_0)-\ell_{j+1}^2(1+\ell_{j+1})^{n-2}\frac{\Lambda_0^{(n)}}{(n-1)^j}+\cr
                 &+(n\ell_{j+1}+1)(1+\ell_j)^{n-2}\bigg[(k-1)\deg(\W_0)-\Lambda_0^{(n)}\sum_{i=1}^j\frac{\ell_i}{(n-1)^i}\bigg]+N(\fol_j,\aa_{\W_j})
\end{align*}
where $\Lambda_0^{(n)}:=(n+1)\deg(\W_0)-\chi(\W_0)$,  $\ell_i=m_{\E_i}\big(\pi_i^*\fol_{i-1}\big)$ for $i=1,\ldots,j+1$ and $N(\fol_{j},\aa_{\W_{j}})$ the number of embedding closed points associated with $\W_j$, counted with multiplicities.

\item [(b)] \begin{align*}
 \mu(\fol_{j+1},\bigcup_{i}\W_i^{(j+1)})&=\mu(\fol_j,\W_j)+\chi(\W_0)\bigg(\sum_{i=0}^{n-2}(1+\ell_{j+1})^i-(1+\ell_{j+1})^{n-1}\bigg)\cr
 &+(\ell_{j+1}^2-\ell_{j+1})(1+\ell_{j+1})^{n-2}\frac{\Lambda_0^{(n)}}{(n-1)^j}\cr
 &-(n\ell_{j+1}-n+2)(1+\ell_{j+1})^{n-2}\bigg((k-1)\deg(\W_0)-\Lambda_0^{(n)}\sum_{i=1}^j\frac{\ell_i}{(n-1)^i}\bigg)
 \end{align*}
 \end{enumerate}
 where $\W_i^{(j+1)}\subset \E_{j+1}$ is each connected component of $\sing(\fol_{j+1})$.
 
 Here we assume that $\dps\sum_{i=\alpha}^{\beta}a_i=0$ for $\alpha < \beta$.
\end{theorem}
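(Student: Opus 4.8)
The plan is to induct on $j$, at each stage reducing to the curve case $d=n-1$ of Theorem~\ref{theorem1} and to its one-step blow-up variation, and then to supply recursions for the handful of invariants that actually change along the tower $\{\pi_j,\M_j,\W_j,\fol_j,\E_j\}$. First I would fix the meaning of $\Lambda_0^{(n)}$: by adjunction on a smooth curve $\W_0\subset\pn$ one has $\deg(\nn_{\W_0})=\int_{\W_0}c_1(T\pn)-\int_{\W_0}c_1(T\W_0)=(n+1)\deg(\W_0)-\chi(\W_0)=\Lambda_0^{(n)}$, so $\Lambda_0^{(n)}$ is exactly the degree of the normal bundle, while $\int_{\W_0}c_1(T\W_0)=\chi(\W_0)$ and the twisting line bundle $L$ of a degree-$k$ foliation restricts to $\W_0$ with degree $(k-1)\deg(\W_0)$. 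The point is that a smooth curve can feed only three numbers into a Bott/Baum--Bott residue, namely $\chi$, $\deg(\nn)$ and $\deg(L|_\W)$, because $\int_\W c_1(T\M)=\chi(\W)+\deg(\nn_\W)$ forces even the ambient tangent data to be absorbed into $\chi$ and $\deg(\nn)$; this is what makes the residue ambient-independent and licenses transporting Theorem~\ref{theorem1} to the non-projective $\M_j$.

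Next I would specialize the kernel to curves. Putting $d=n-1$ gives $n-d=1$, so the double sum defining $\nu(\fol,\W,\varphi_a)$ retains only the four terms indexed by $(a,m)\in\{((0,0),0),((0,0),1),((1,0),0),((0,1),0)\}$, and the complete symmetric functions collapse to $\mathcal{W}_0^{(d)}=1$ and $\mathcal{W}_1^{(d)}=\sigma_1^{(d)}$. Using $\varphi_{(0,0)}(x)=x(1+x)^{n-1}$ with $\varphi_{(0,0)}'(x)=(1+x)^{n-2}(1+nx)$, $\varphi_{(1,0)}(x)=x(1+x)^{n-2}$ and $\varphi_{(0,1)}(x)=(1+x)^{n-1}$, together with $\deg(\W)\sigma_1^{(d)}=\deg(\nn_\W)$ and $\deg(\W)\tau_1^{(d)}=\chi(\W)$, Theorem~\ref{theorem1}(b) collapses to the master formula
\[ \mu(\fol,\W)=\chi(\W)(1+\ell)^{n-1}-\deg(\nn_\W)\,\ell^2(1+\ell)^{n-2}+\deg(L|_\W)(1+n\ell)(1+\ell)^{n-2}+N(\fol,\aa_\W), \]
and the identical specialization of $\vartheta_a$ produces the one-step variation, whose Euler-characteristic part $\chi\big(\sum_{i=0}^{n-2}(1+\ell)^i-(1+\ell)^{n-1}\big)$ is the contribution of the new strata carried by $\E=\P(\nn_\W)$.

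The heart of the proof is the recursion for these invariants along the tower. Since $\W_j$ is homeomorphic to $\W_{j-1}$, the Euler characteristic is constant, $\chi(\W_j)=\chi(\W_0)$. For the other two I would use that blowing up the curve $\W_{j-1}$ yields $\E_j=\P(\nn_{\W_{j-1}})$, a $\P^{n-2}$-bundle over $\W_{j-1}$, inside which $\W_j$ sits as a section with $\pi_j|_{\W_j}$ an isomorphism; the section is pinned down by demanding that it be the component of $\sing(\fol_j)\cap\E_j$ homeomorphic to $\W_{j-1}$, i.e. by the behaviour of $\fol_j$ transverse to $\E_j$. Splitting $\nn_{\W_j/\M_j}$ through $0\to\nn_{\W_j/\E_j}\to\nn_{\W_j/\M_j}\to\oo_{\E_j}(\E_j)|_{\W_j}\to0$, and using $\oo_{\E_j}(\E_j)=\oo_{\P(\nn_{\W_{j-1}})}(-1)$ together with the relative Euler sequence of the projective bundle, reduces $\deg(\nn_{\W_j})$ to the tautological degree along the section; the rank $n-1$ of $\nn_{\W_{j-1}}$ is what inserts the factor $1/(n-1)$ at each step, while the passage to the strict transform, where $L_j$ differs from $\pi_j^*L_{j-1}$ by a twist along $\E_j$ whose multiplicity is governed by the vanishing order $\ell_j$, injects the $\ell_j$-dependence upon restriction to the section. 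Solving the two recursions should give $\deg(\nn_{\W_j})=\Lambda_0^{(n)}/(n-1)^j$ and $\deg(L_j|_{\W_j})=(k-1)\deg(\W_0)-\Lambda_0^{(n)}\sum_{i=1}^{j}\ell_i/(n-1)^i$.

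Finally I would substitute $\chi(\W_j)=\chi(\W_0)$, the solved recursions, and $\ell=\ell_{j+1}$ into the master formula to obtain (a), and apply the curve specialization of the $\vartheta_a$-variation at stage $j$ to obtain (b), whose right-hand side is manifestly $\mu(\fol_j,\W_j)$ plus that variation. I expect the recursions of the previous paragraph to be the main obstacle: identifying $\W_j$ as the correct section of $\P(\nn_{\W_{j-1}})$ and reading off both its normal-bundle degree and the restricted twist as functions of $\ell_j$ is the delicate geometric input, and it is precisely there that the denominators $(n-1)^i$ and the weights $\ell_i$ are produced. A secondary point to control is that the variation formula governs the whole union $\bigcup_i\W_i^{(j+1)}$, so I must check that the embedded points $N(\fol_{j+1},\aa_{\W_{j+1}})$ and the strata homeomorphic to $\P^{n-2}$ are faithfully recorded by the specialized $\vartheta_a$ rather than double-counted.
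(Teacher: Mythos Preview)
Your overall architecture matches the paper's: both arguments hinge on the same three recursions
\[
\chi(\W_j)=\chi(\W_0),\qquad \int_{\W_j}c_1(\nn_{\W_j})=\frac{\Lambda_0^{(n)}}{(n-1)^{j}},\qquad \int_{\W_j}c_1(\tt_{\fol_j}^*)=(k-1)\deg(\W_0)-\Lambda_0^{(n)}\sum_{i=1}^{j}\frac{\ell_i}{(n-1)^{i}},
\]
and your curve specialization of the kernel $\nu$ to the ``master formula'' is correct and agrees with the paper's $j=0$ case. The paper establishes the last two identities via Porteous' formula $c_1(\tt_{\M_j})=\pi_j^*c_1(\tt_{\M_{j-1}})-(n-2)\E_j$ and $c_1(\tt_{\fol_j}^*)=\pi_j^*c_1(\tt_{\fol_{j-1}}^*)-\ell_j\E_j$, together with the section identity $\int_{\W_j}\E_j=\Lambda_0^{(n)}/(n-1)^j$; your exact-sequence approach to the same recursions is a legitimate alternative.

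The genuine gap is your step ``this makes the residue ambient-independent and licenses transporting Theorem~\ref{theorem1} to the non-projective $\M_j$''. Theorem~\ref{theorem1} is stated and proved for a \emph{global complete intersection} $\W_0\subset\P^n$: its proof feeds the deformation lemma and the Baum--Bott count $\sum_i k^i$ on $\P^n$ into the kernel $\nu$. For $j\ge1$ the curve $\W_j\subset\M_j$ is smooth (hence l.c.i.) but in general neither a global complete intersection nor sitting inside $\P^n$, so Theorem~\ref{theorem1} does not apply as a black box, and the observation that only $\chi,\deg(\nn),\deg(L|_\W)$ survive in the answer does not by itself prove the formula on $\M_j$. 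The paper does not invoke ambient-independence; instead it reproves the needed Baum--Bott computations directly on $\M_j$ and $\E_{j+1}$ (this is Theorem~\ref{theoremd}, obtained from Porteous' blow-up formula for $c_i(\tt_{\M_j})$ and explicit integration over $\E_j$), re-runs the deformation lemma at stage $j$ to produce a family $\fol_{jt}$ special along $\W_j$, and then uses
\[
\mu(\fol_j,\W_j)=\nn(\fol_j,\M_j)-\nn(\fol_{j+1},\M_{j+1})+\nn(\fol_{j+1},\E_{j+1})+N(\fol_j,\aa_{\W_j})
\]
together with $N(\fol_j,\aa_{\W_j})=N(\fol_{j+1},\aa_{\E_{j+1}})$ to obtain both (a) and (b). In other words, the content you are packaging as ``ambient-independence'' is precisely Theorem~\ref{theoremd} plus the stage-$j$ deformation argument, and that is the work you still have to do.
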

The map $\pi_j: \M_j\setminus\E_j\to \M_{j-1}\setminus\W_{j-1}$ is a biholomorphism which implies that the sequence $N(\fol_{j},\aa_{\W_{j}})$ is non-increasing. 
Since $\mu(\fol_j, \W_j)$ is a natural number, the order $m_{\W_i}(\fol_i)$ (the order of vanishing of the foliation $\fol_i$ along the curve $\W_i$) typically increases by one during a blow-up sequence. This increase is expected to be $m_{\W_0}(\fol_0) + 1$. By Theorem \ref{theoremB}, we conclude that the order of annulment $\ell_i = m_{\E_i}(\pi_i^*\fol_{i-1})$ will be zero for sufficiently large $i$. This implies that after a certain point in the sequence of blow-ups, the foliation no longer increases in complexity, and the process stabilizes. Consequently, this result offers a new approach to extending Seidenberg's Theorem for foliations with non-isolated singularities.  
A well-known fact is that for $n=2$, the resolution Theorem of Seidenberg
\cite{sein68} asserts that all the singularities of $\fol_i$ are elementary for $i$ large enough. This means that if
$p\in \sing(\fol_i)$ then $\fol_i$ is locally generated by a vector
field $X_k$ having a linear part with eigenvalues $\lambda_1$ and
$\lambda_2$ where ${\lambda_1}/{\lambda_2} \not\in \Q_{+}$ or at
least one eigenvalues is non-zero. 

As defined in \cite{CCS}, the component $\W_0$ will be called an \textit{absolutely isolated singularity} if all the components $\W_i$, which appear in this process, have the same dimension as $\W_0$. In \cite{CCS}, the desingularization theorem is proven when $\W_0$ is a non-dicritical absolutely isolated singularity. The non-dicritical condition is removed in \cite{rb}. However, it is unfortunate that a general birational desingularization theorem is not possible, as shown in \cite{FSFS}, where F. Sanz and F. Sancho presented an example of a vector field $X$ that  \textit{cannot be} desingularized by any sequence of blowups. For further details, refer to Example \ref{SSe} and Proposition \ref{propSS}. For complex 3-folds, Panazzolo \cite{Panazzolo} and McQuillan–Panazzolo \cite{McQuillan–Panazzolo} provide a non-birational desingularization (after performing smoothed weighted blow-ups) and Cano in \cite{Cano} proposes a desingularization approach that permits the use of formal, non-algebraic blow-up centers. Our next result states that we can proceed with a birational desingularization such that, for generic points of the curves in the desingularized model, the singularities are elementary.

\begin{theorem}
\label{theoremA} Let $\fol_0$ be a one-dimensional holomorphic
foliation defined on $\M_0=\P^n$ such that
$\W_0\subset\sing(\fol_0)$
where $\W_0$ is a smooth curve.

If there is a blow-up
sequence $\{\pi_i,\M_i,\W_i,\fol_i,\E_i\}$ such that $\W_i$ is a homeomorphic curve to $\W_{i-1}$ and $\pi_j(\W_j)=\W_{i-1}$, then there is a natural number $k$ such that $\W_i$ is an
elementary component of $\fol_i$ for $i\ge k$, for almost all points of $\W_i$. In particular,  $\fol_i$ is  generically log canonical along $\W_i$. for all $i\ge k$. 
\end{theorem}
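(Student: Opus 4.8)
The plan is to combine the numerical stabilization furnished by Theorem \ref{theoremB} with a local analysis of the generic transversal singularity. First I would record the consequence already noted after Theorem \ref{theoremB}: since $\mu(\fol_i,\W_i)$ is a non-negative integer, the vanishing orders $\ell_i=m_{\E_i}(\pi_i^*\fol_{i-1})$ must vanish for all sufficiently large $i$, which is equivalent to saying that the multiplicity $m_{\W_{i-1}}(\fol_{i-1})$ drops to $1$. Substituting $\ell_i=0$ into the formula of Theorem \ref{theoremB}(a) makes every term except $N(\fol_i,\aa_{\W_i})$ independent of $i$, because the tail $\sum_j \ell_j/(n-1)^j$ freezes and every factor $(1+\ell_{j+1})$ becomes $1$. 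Since $N(\fol_i,\aa_{\W_i})$ is a non-increasing sequence of non-negative integers it stabilizes, and therefore $\mu(\fol_i,\W_i)$ is eventually constant. I would then fix a natural number $k$ large enough that $\ell_i=0$ and that both $\mu(\fol_i,\W_i)$ and $N(\fol_i,\aa_{\W_i})$ are constant for all $i\ge k$.

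Next I would pass to an arbitrary generic point $p\in\W_i$ and choose local coordinates $(x_1,\dots,x_{n-1},t)$ with $\W_i=\{x=0\}$, so that $\fol_i$ is generated by a vector field $X=\sum_j a_j(x,t)\partial_{x_j}+b(x,t)\partial_t$ whose coefficients all vanish along $\W_i$. Multiplicity one means that the transversal linear part $A(t)=\big(\partial a_j/\partial x_l(0,t)\big)$ is not identically zero, and, by the definition of elementary recalled in the discussion of Seidenberg's theorem, the component $\W_i$ is elementary at $p$ exactly when $A(t)$ admits a non-zero eigenvalue. Thus the theorem is reduced to proving that $A(t)$ is non-nilpotent for almost all $t$. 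To analyze this I would slice by a generic transversal $T=\{t=t_0\}\cong(\C^{n-1},0)$; blowing up $\M_i$ along the codimension-$(n-1)$ center $\W_i$ restricts near $p$ to blowing up the point $x=0$ of $T$, and the hypothesis that the successive centers $\W_j$ remain homeomorphic to $\W_0$ and map onto one another guarantees that the whole sequence $\{\pi_j\}$ induces a genuine sequence of point blow-ups of the isolated transversal singularity, so that the reduction theory of absolutely isolated singularities applies \cite{CCS,rb}.

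The conclusion then follows by contradiction. If $A(t)$ were nilpotent for generic $t$, the generic transversal singularity would be non-elementary, and by the reduction theory just invoked (Seidenberg \cite{sein68} for $n=2$, and its higher-dimensional analogue for the induced point blow-ups) a further blow-up along $\W_i$ would strictly lower the transversal Milnor number and hence strictly lower $\mu(\fol_i,\W_i)$; this contradicts the constancy of $\mu(\fol_i,\W_i)$ for $i\ge k$ established in the first step. Therefore $A(t)$ has a non-zero eigenvalue for almost all $t$, so $\W_i$ is an elementary component of $\fol_i$ for every $i\ge k$, and since an elementary singularity (non-nilpotent linear part) of a foliation by curves is log canonical, $\fol_i$ is generically log canonical along $\W_i$. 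The main obstacle I anticipate is precisely this last step: one must show that a non-elementary generic transversal type can always be strictly simplified by the induced blow-up and that this simplification is faithfully recorded by the global invariant $\mu(\fol_i,\W_i)$. In particular one has to exclude, at generic points of the curve, the pathological non-resolvable behaviour exhibited in the Sanz--Sancho example \cite{FSFS}, and one must match the local drop of the transversal Milnor number with the collapsed form of the formula of Theorem \ref{theoremB}; it is here that the genericity of $p\in\W_i$ together with the multiplicity-one normalization does the essential work.
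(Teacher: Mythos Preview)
Your first step---using the integrality of $\mu(\fol_j,\W_j)$ from Theorem \ref{theoremB} to force $\ell_i=0$ and hence $m_{\W_i}(\fol_i)=1$ for all large $i$---is correct and coincides with the paper's Lemma \ref{lemafp0}. From that point on your route diverges from the paper's and carries a real gap.

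The paper does not argue via transversal slices or monotonicity of a Milnor number. Once $m_{\W_i}(\fol_i)=1$ and $\W_i$ is of type III, it performs an explicit case analysis (written out for $n=3$) of the transversal linear part $\mathbf{A}_{X_i}|_{\W_i}$: Proposition \ref{proed} treats two non-identically-zero eigenvalues, Proposition \ref{pro1an} exactly one, and Propositions \ref{pro00}--\ref{pro01} the nilpotent cases. In each nilpotent case the eigenvalues along the curves appearing after one, two, or three further blow-ups are computed by hand and shown to be non-zero at generic points. The log canonical statement is then quoted from \cite{Cascini-Spicer}.

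The gap in your argument is the implication ``non-elementary generic transversal $\Rightarrow$ $\mu(\fol_{i+1},\W_{i+1})<\mu(\fol_i,\W_i)$''. Two links are missing. First, $\W_{i+1}$ is only one of possibly several components of $\sing(\fol_{i+1})\cap\E_{i+1}$, and nothing you wrote ties $\mu(\fol_{i+1},\W_{i+1})$ to the Milnor number of the generic transversal slice; with $\ell_{j+1}=0$ Theorem \ref{theoremB}(a) collapses to
\[
\mu(\fol_j,\W_j)=\chi(\W_0)+(k-1)\deg(\W_0)-\Lambda_0^{(n)}\sum_{i=1}^j\frac{\ell_i}{(n-1)^i}+N(\fol_j,\aa_{\W_j}),
\]
and none of these terms records the Jordan type of $\mathbf{A}_{X_j}|_{\W_j}$. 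Second, the reduction theorems you invoke (\cite{sein68}, \cite{CCS}, \cite{rb}) establish \emph{termination} of the blow-up process for (absolutely) isolated singularities via lexicographic or multiplicity invariants, not a strict drop of the Milnor number at each step; so even on the transversal the inequality you need is not available off the shelf. You flag this yourself as ``the main obstacle'', and it is a genuine one: without either an explicit normal-form computation (as the paper carries out) or a precise identity relating the transversal Milnor number to $\mu(\fol_i,\W_i)$ or to $N(\fol_i,\aa_{\W_i})$, the contradiction does not close.
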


From Theorem \ref{theoremA}, we can conclude that after a finite number of blow-ups along
homeomorphic curves to $\W_0$,  the foliation $\fol_k$ in the open set $U_k$
is described by the following vector field.

$$\dps X_k=x_1\bigg(\lambda_1^{(k)}+P_1(x)\bigg)\frac{\partial}{\partial x_1}+\bigg(x_1r_1(x_3)+x_2\lambda_2^{(k)}+P_2(x)\bigg)\frac{\partial}{\partial x_2}+P_3(x)\frac{\partial}{\partial x_3}$$
where $\lambda_i^{(k)}=\lambda_i^{(k)}(x_3)$ for $i=1,2$, the singular component $\W_k$ is defined
as $x_1=x_2=0$ and $X_k$ having a linear part with eigenvalues $\lambda_1^{(k)}$ and
$\lambda_2^{(k)}$ where ${\lambda_1^{(k)}}/{\lambda_2^{(k)}} \not\in \Q_{+}$,  for almost all points of $\W_k$, or at
least one of these eigenvalues is  not identically zero. 

As a consequence of the \ref{theoremB}, we obtain a bound on the maximum number of blowups needed to resolve the foliation 
which depends on the vanishing order of the pullback foliation along the exceptional divisor, as well as on numerical and topological invariants of the curve.

\begin{corollary}
       Let $\fol_0$ be a one-dimensional holomorphic
foliation defined on $\M_0=\P^n$ such that
$\W_0\subset\sing(\fol_0)$
where $\W_0$ is a smooth curve. Then, 
the maximum number of blowups needed to resolve the foliation is
\begin{equation} 
\sum_{\ell=1}{^\ell_1}\lfloor \log_{n-1}\ell(1+\ell)^{n-2}(2\ell+1)(\Lambda_0^{(n)})^{2})\rfloor 
\end{equation}
$\Lambda_0^{(n)}=(n+1)\deg(\W_0)-\chi(\W_0)$ and   $\ell_1=m_{\E_1}\big(\pi_1^*\fol\big)$. 
\end{corollary}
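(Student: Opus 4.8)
The plan is to run a purely arithmetic argument on the formula of Theorem \ref{theoremB}(a), exploiting the single fact that $\mu(\fol_j,\W_j)$ and $N(\fol_j,\aa_{\W_j})$ are non-negative integers. Subtracting the integer $N(\fol_j,\aa_{\W_j})$ and isolating the only terms of the formula that carry the denominator $(n-1)^j$, I would record that integrality of $\mu(\fol_j,\W_j)$ amounts to
$$(n-1)^{j} \mid B_\ell+A_\ell S_j, \qquad B_\ell=\ell^2(1+\ell)^{n-2}\Lambda_0^{(n)}, \quad A_\ell=(n\ell+1)(1+\ell)^{n-2}\Lambda_0^{(n)},$$
where $\ell=\ell_{j+1}$ is the current vanishing order and $S_j=\sum_{i=1}^{j}\ell_i(n-1)^{j-i}\in\zz$ encodes the whole history of the sequence. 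The point of writing things this way is that $S_j$ obeys the one-step recursion $S_{j+1}=(n-1)S_j+\ell_{j+1}$, which will let me eliminate the history.

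The decisive step is a telescoping computation. Assuming two consecutive blow-ups stay at the same level, $\ell_{j+1}=\ell_{j+2}=\ell$, I would substitute the recursion to obtain
$$B_\ell+A_\ell S_{j+1}=(n-1)\big(B_\ell+A_\ell S_j\big)+\ell(1+\ell)^{n-2}(2\ell+1)\,\Lambda_0^{(n)},$$
the constant $\ell(1+\ell)^{n-2}(2\ell+1)\Lambda_0^{(n)}$ arising precisely as $A_\ell\ell-(n-2)B_\ell$. Since the integrality at step $j$ renders the first summand divisible by $(n-1)^{j+1}$, integrality at step $j+1$ becomes the history-free divisibility
$$(n-1)^{\,j+1} \mid \ell(1+\ell)^{n-2}(2\ell+1)\,\Lambda_0^{(n)}.$$
As the right-hand side is a fixed nonzero integer so long as the singularity is not yet resolved, the exponent $j+1$ cannot exceed its base-$(n-1)$ logarithm; hence a block of consecutive blow-ups at level $\ell$ has length at most $\big\lfloor\log_{n-1}\!\big(\ell(1+\ell)^{n-2}(2\ell+1)(\Lambda_0^{(n)})^2\big)\big\rfloor$, the extra factor $\Lambda_0^{(n)}$ in the stated bound being harmless slack since $\Lambda_0^{(n)}\ge1$.

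To assemble the global bound I would invoke the discussion following Theorem \ref{theoremB}: since $\mu(\fol_j,\W_j)\in\N$ forces $\ell_i\to0$, the vanishing orders decrease from $\ell_1=m_{\E_1}(\pi_1^*\fol)$ down to $0$, partitioning the resolution into one contiguous block for each value $\ell\in\{1,\dots,\ell_1\}$. Applying the per-block estimate at each level and summing yields
$$\sum_{\ell=1}^{\ell_1}\big\lfloor \log_{n-1}\!\big(\ell(1+\ell)^{n-2}(2\ell+1)(\Lambda_0^{(n)})^2\big)\big\rfloor,$$
which is the asserted bound, while Theorem \ref{theoremA} guarantees termination with generically elementary singularities. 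The main obstacle I anticipate is bookkeeping rather than conceptual: making the reduction from ``$\mu(\fol_j,\W_j)\in\zz$'' to the clean divisibility $(n-1)^j\mid B_\ell+A_\ell S_j$ airtight requires verifying that every remaining term of the Theorem \ref{theoremB}(a) formula---in particular $(1+\ell_{j+1})^{n-1}\chi(\W_0)$ and the bracket $(n\ell_{j+1}+1)(1+\ell_j)^{n-2}(k-1)\deg(\W_0)$---is genuinely integral and hides no $(n-1)$-adic denominator, and that the passage from blocks to the single sum over levels is exact. Justifying the monotonicity of $\{\ell_i\}$ from the geometry of the centers, and thereby that each level is visited exactly once, is the delicate point on which the clean form of the sum depends.
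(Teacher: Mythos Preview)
Your approach is essentially the same as the paper's. The paper isolates the fractional part of the Milnor-number formula as the integer sequence $a_j=(1+\ell)^{n-2}\Lambda_0^{(n)}\big(\ell^2/(n-1)^j+(n\ell+1)\sum_{i\le j}\ell_i/(n-1)^i\big)$ and, within a constant-$\ell$ block, takes the first difference $b_j=a_j-a_{j-1}=\ell(1+\ell)^{n-2}(1+2\ell)\Lambda_0^{(n)}/(n-1)^j$; integrality then bounds the block length by $\lfloor\log_{n-1}\big(\ell(1+\ell)^{n-2}(1+2\ell)\Lambda_0^{(n)}\big)\rfloor$, and summing over levels gives the corollary. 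Your $(n-1)^j$-cleared divisibility statement $(n-1)^j\mid B_\ell+A_\ell S_j$ together with the recursion $S_{j+1}=(n-1)S_j+\ell_{j+1}$ is exactly the same computation written multiplicatively rather than as a difference of fractions; the constant $A_\ell\ell-(n-2)B_\ell=\ell(1+\ell)^{n-2}(2\ell+1)\Lambda_0^{(n)}$ you extract is the numerator of the paper's $b_j$. You also correctly flag that the stated $(\Lambda_0^{(n)})^2$ is slack over the $\Lambda_0^{(n)}$ the argument actually yields, and that the monotonicity of the $\ell_i$ (each level visited once) is assumed rather than proved---the paper makes the same tacit assumption.
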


In \cite{fss},  F. Sancho de Salas presented a similar theorem, but with a key difference: the case where $\W_0$ has \textit{codimension two} and is also a \textit{ absolutely isolated} component.  Theorem 1.3 generalizes the Sancho de Salas' result in the case of   foliations on $\P^n$, as \textit{different dimensional singularities} may emerge during the  desingularization process; see example \ref{examp11}. Despite the challenges posed, we establish that if such a sequence exists, then, starting from a certain index, the singular components become elementary, \textit{regardless of whether they are absolutely isolated or not}.

\subsection*{Acknowledgments}
MC is partially supported by the Universit\`a degli Studi di Bari and by the
 PRIN 2022MWPMAB- ``Interactions between Geometric Structures and Function Theories'' and he is a member of INdAM-GNSAGA;
he was  partially supported by CNPq grant numbers 202374/2018-1, 400821/2016-8 and  Fapemig grant numbers  APQ-02674-21, APQ-00798-18,  APQ-00056-20. 

\section{Preliminaries}

\subsection{Holomorphic foliations with non-isolated singularities}
Let $\fol_0$ be a one-dimensional holomorphic foliation on  $\mathbb{P}^n$, with $n \geq 3$, such that its singular set contains a
smooth subvariety $\W_0$ of pure codimension $d\ge2$. Given that $\W_0$ is smooth, for each closed point
$p\in\W_0$ there exists an open set $U_0$ of $p$ and a coordinate
system $z\in\C^n$ such that $\W_0\cap U$ can be defined as
$\{z_1=\ldots=z_{d}=0\}$. Therefore, in $U_0$ the foliation $\fol_0$
is described by the following vector field
\begin{equation}
\label{fol} X_0=\dps P_1(z)\frac{\partial}{\partial z_1}+\ldots+\dps
P_n(z)\frac{\partial}{\partial z_n}
\end{equation}
which we can write the local sections as
\begin{equation}
\label{for40}
        P_i(z) = \dps\sum_{|a|=m_i}z_1^{a_1}\cdots z_{d}^{a_{d}}P_{i,a}(z)
\end{equation}
where $a:=(a_1,\ldots,a_{d})\in \zz^d$ with $|a|:=a_1+\ldots+a_{d}$,
$a_i\ge0$ and for each $i$ at least one among the $P_{i,a}(z)$ does
not vanish at $\{z_{1}=\ldots=z_{d}=0\}$. The natural number $m_i$  will be called the multiplicity
of $P_i$ along $\W_0$ and denoted by $m_{\W_0}(P_i)$.
\begin{definition}\label{defm} The
\textit{multiplicity of $\fol_0$ along $\W_0$} is defined as follows
$$\mbox{m}_{\W_0}(\fol_0)={\rm min}\{m_1,\ldots,m_n\}.$$
\end{definition}

\begin{lema}\label{lemafp} Let $\fol_0$ be a holomorphic foliation by curves,
defined in the neighborhood of $p$ by the vector field
$$ X_p = \displaystyle\sum^{n}_{j=1}P_{j}(z)\frac{\partial}{\partial{z_{j}}}$$
as in (\ref{fol}) with $m_i=m_{W_0}(P_i)$. Then, by a linear change of coordinates,
$\fol_0$ may be described by the vector field 
$$ Y_p = \displaystyle\sum^{n}_{j=1}Q_{j}(w)\frac{\partial}{\partial{w_{j}}}$$
with $$m_{W_0}(Q_j)=\left\{\begin{array}{ll}
                            m^{\prime}_{\W_0}(\fol_0),&{\rm for}\quad j=1,\ldots,d\cr
                            m_{\W_0}(\fol_0),&{\rm for}\quad j=d+1,\ldots,n
                            \end{array}\right.$$
where $m^{\prime}_{\W_0}(\fol_0)={\rm min}\{m_1,\ldots,m_d\}$.
\end{lema}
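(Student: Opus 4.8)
The plan is to obtain the normalization by a single generic linear change of coordinates preserving $\W_0$, and to read off the two multiplicity regimes from the leading terms. I work in the local coordinates $z$ in which $\W_0=\{z_1=\cdots=z_d=0\}$, write $I=(z_1,\dots,z_d)$ for the local ideal of $\W_0$, and recall that $m_{\W_0}(f)=\max\{r:f\in I^r\}$. Since $\W_0$ is smooth, the conormal sheaf $I/I^2$ is locally free of rank $d$ and each graded piece $I^r/I^{r+1}\cong \mathrm{Sym}^r(I/I^2)$ is a free $\oo_{\W_0}$-module with basis the degree-$r$ monomials $z^a$, $|a|=r$. Hence to each component I may attach its nonzero leading symbol $\sigma(P_j)\in I^{m_j}/I^{m_j+1}$, and the point of this construction is that $m_{\W_0}(f)>r$ exactly when the degree-$r$ symbol of $f$ vanishes. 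Set $m=m_{\W_0}(\fol_0)=\min_j m_j$ and $m'=\min_{1\le j\le d}m_j$.

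First I would restrict to the linear changes $w=Mz$ that keep the local equations of $\W_0$, i.e.\ the invertible block upper-triangular matrices
$$M=\begin{pmatrix} A & 0\\ B & C\end{pmatrix},\qquad A\in \mathrm{GL}_d,\ C\in \mathrm{GL}_{n-d},\ B\ \text{arbitrary}.$$
Such an $M$ preserves $I$, hence preserves every multiplicity $m_{\W_0}(\cdot)$; and since $\partial_{z_j}=\sum_k M_{kj}\,\partial_{w_k}$, the components transform by the $\C$-linear rule $Q_k=\sum_j M_{kj}P_j$ (re-expressed via $z=M^{-1}w$), so that $m_{\W_0}(Q_k)=m_{\W_0}\big(\sum_j M_{kj}P_j\big)$. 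The block shape forces $Q_1,\dots,Q_d$ to be combinations of $P_1,\dots,P_d$ only, whereas $Q_{d+1},\dots,Q_n$ may additionally pick up $P_1,\dots,P_d$ through the shear block $B$.

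The core is a genericity statement: for an index set $S$ with $\mu:=\min_{j\in S}m_j$ and scalars $(c_j)_{j\in S}$, the degree-$\mu$ part of $\sum_{j\in S}c_jP_j$ is $\sum_{j\in S,\,m_j=\mu}c_j\,\sigma(P_j)$, so $m_{\W_0}\big(\sum_{j\in S}c_jP_j\big)=\mu$ precisely when this scalar combination of leading symbols is nonzero; as the map $c\mapsto\sum_{m_j=\mu}c_j\sigma(P_j)$ has image $\mathrm{span}_\C\{\sigma(P_j):m_j=\mu\}\neq 0$, its kernel is a proper linear subspace. Applying this with $S=\{1,\dots,d\}$ (so $\mu=m'$) to each of the first $d$ rows of $M$, and with $S=\{1,\dots,n\}$ (so $\mu=m$) to each of the last $n-d$ rows, I find that for $M$ outside a finite union of proper subvarieties, every $Q_k$ with $k\le d$ has multiplicity exactly $m'$ and every $Q_k$ with $k>d$ has multiplicity exactly $m$. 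Intersecting with the (open, dense) conditions that $A$ and $C$ be invertible shows such $M$ exist, and then $Y_p=\sum_k Q_k\,\partial_{w_k}$ is the required vector field.

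The step I expect to be the main obstacle is the symbol calculus: one must verify that $I^r/I^{r+1}$ is $\oo_{\W_0}$-free—this is exactly where smoothness of $\W_0$ enters—so that nonvanishing of the leading symbol is the correct obstruction to a jump in multiplicity, and one must argue that cancellation is governed by genuine scalar relations among the fixed symbols rather than by accidental module relations, which is what makes each bad locus a proper subspace. A secondary point to check carefully is the case in which the global minimum $m$ is attained only among the normal components (so $m=m'$ and all tangent components $P_{d+1},\dots,P_n$ have multiplicity strictly larger than $m$): there the last $n-d$ components can be brought down to $m$ only by using the shear block $B$ to inject a normal component of minimal multiplicity, which is precisely what a generic choice of $B$ guarantees.
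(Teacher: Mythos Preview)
Your proposal is correct and follows essentially the same approach as the paper: both use a generic invertible linear change $w=Mz$ with block form $M=\begin{pmatrix}A&0\\B&C\end{pmatrix}$ preserving $\W_0$, so that the first $d$ new components are combinations of $P_1,\dots,P_d$ alone and the last $n-d$ may involve all the $P_j$, and then invoke genericity to avoid cancellation of leading terms. The paper compresses this last step into ``adjusting the coefficients $a_{ij}$, if necessary,'' whereas you spell out the leading-symbol argument; incidentally, your matrix is block \emph{lower}-triangular, not upper.
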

\begin{proof} In fact, it is enough to consider the linear
transformation $w=Az$ where $A=(a_{ij})\in GL(n,\C)$ with $a_{ij}=0$
for $i=1,\ldots,d$ and $j=d+1,\ldots,n$. In this way, $B=(b_{ij}) =
A^{-1}$ has the same properties, that is, the subspace $W$ given by
$\{z_1=\ldots=z_d=0\}$ is an invariant set under this
transformation. Adjusting the coefficients $a_{ij}$, if necessary,
we can admit that
$$ \dot w_i=\left\{\begin{array}{ll}
    \dps \sum_{j=1}^{d}a_{ij}\dot z_j=\dps\sum_{j=1}^{d}a_{ij}P_j(Bw)=Q_i(w),& \mathrm{ for }\quad
    i=1,\ldots,d\\
    \dps \sum_{j=1}^{n}a_{ij}\dot z_j=\dps\sum_{j=1}^{n}a_{ij}P_j(Bw)=Q_i(w),& \mathrm{ for }\quad
    i=d+1,\ldots,n
    \end{array}\right.$$
having the required properties.
\end{proof}
\par Without loss of generality, we can assume that the vector field in (\ref{fol}) is such that
$$m_{\W_0}(P_j)=
\begin{cases}
m_1, & {\rm for}\ j=1,\ldots,d \\
m_n, & {\rm for}\ j=d+1,\ldots,n.
\end{cases}
$$
with $m_1\ge m_n=\mbox{m}_{\W_0}(\fol)$. 

From now on,  wee will consider the
$d\times d$ complex matrix
\begin{equation}
\label{defmc} \mathbf{A}_{X_0} = \dps\bigg[\frac{\partial
P_i}{\partial z_j}\bigg]_{1\le i,j\le d}
\end{equation}
\begin{definition}\label{def2}
The component $\W_0$ will  be referred to as  locally elementary of
$\fol_0$ if the matrix $\mathbf{A}_{X_0}$ has at least one nonzero
eigenvalue for almost all points $z\in \W_0\cap U$, where $U$ is an open set. On the other hand, if all the eigenvalues of $\mathbf{A}_{X_0}|_{\W_0}$ are identically zero, then $\W_0$ will be called a non-elementary component of $\sing(\fol_0)$.
\end{definition}
Clearly, if $m_{\W_0}(\fol_0)\ge 2$ then $\mathbf{A}_{X_0} \equiv 0$
for all $z\in\W_0\cap U_0$. Moreover, the linear part of the vector
field $X_0$ restricted to $\W_0$ has at most $d$  identically zero eigenvalues.

We have the following proposition
\begin{proposition}
The definitions (\ref{defm}) and (\ref{def2}) are independent of the
chosen coordinate system.
\end{proposition}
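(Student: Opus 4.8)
The plan is to show coordinate-independence of both definitions by tracking how the relevant invariants transform under an arbitrary biholomorphic change of coordinates fixing $\W_0$. The key observation is that any two local coordinate systems adapted to $\W_0$ (i.e.\ in which $\W_0 = \{z_1=\cdots=z_d=0\}$) differ by a biholomorphism $\phi$ that preserves the ideal $\mathcal{I}_{\W_0} = (z_1,\ldots,z_d)$. I would split the argument into the two definitions, since Definition~\ref{defm} concerns the multiplicity $m_{\W_0}(\fol_0)$ and Definition~\ref{def2} concerns the eigenvalues of $\mathbf{A}_{X_0}$, and these require slightly different bookkeeping.

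\textbf{Step 1 (multiplicity).} First I would verify that $m_{\W_0}(\fol_0)$ is well defined. The quantity $m_{\W_0}(P_i)$ is the order of vanishing of the component $P_i$ along $\W_0$, i.e.\ the largest $m$ such that $P_i \in \mathcal{I}_{\W_0}^{m}$ modulo the relations coming from the vector field transformation rule. The subtle point is that under a change of coordinates $w = \phi(z)$, a vector field transforms as $X \mapsto (D\phi)\, X$, so the new components $Q_j$ are $\C\{z\}$-linear combinations of the old $P_i$ with coefficients $\partial w_j/\partial z_i$. Since $\phi$ preserves $\mathcal{I}_{\W_0}$, one has $\partial w_j/\partial z_i \in \mathcal{I}_{\W_0}$ whenever $j \le d < i$ or $i \le d < j$ — this is precisely the block-triangular structure exploited in Lemma~\ref{lemafp}. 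I would argue that the \emph{minimum} over all components, $\mbox{m}_{\W_0}(\fol_0) = \min\{m_1,\ldots,m_n\}$, is intrinsic: the transformation can only raise the order of individual components or mix components of equal minimal order, but cannot lower the minimum, and applying the same argument to $\phi^{-1}$ gives the reverse inequality. Hence $\mbox{m}_{\W_0}(\fol_0)$ is invariant.

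\textbf{Step 2 (elementarity).} For Definition~\ref{def2} I would show the eigenvalues of $\mathbf{A}_{X_0}|_{\W_0}$ are intrinsic up to the ambiguity already accounted for. Here the natural object is the linear action induced by $X_0$ on the conormal/normal bundle $\nn_{\W_0}$: since $\W_0 \subset \singf$ and $X_0$ vanishes on $\W_0$, the $1$-jet of $X_0$ along $\W_0$ descends to a well-defined endomorphism of the normal bundle $\nn_{\W_0}$, whose matrix in adapted coordinates is exactly the $d\times d$ block $\mathbf{A}_{X_0} = [\partial P_i/\partial z_j]_{1\le i,j\le d}$ restricted to $\W_0$. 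Because an endomorphism of a vector bundle has coordinate-independent eigenvalues (its characteristic polynomial is a well-defined section of a symmetric power), the condition ``at least one nonzero eigenvalue for almost all points of $\W_0$'' is intrinsic. I would make this precise by checking that under $\phi$ the block $\mathbf{A}_{X_0}$ is conjugated (modulo $\mathcal{I}_{\W_0}$) by the restriction of the Jacobian $D\phi$ to the normal directions, which is invertible; conjugation preserves the spectrum, and in particular preserves whether the spectrum is identically zero.

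\textbf{The main obstacle} I expect is making Step~2 rigorous in the degenerate case $m_{\W_0}(\fol_0)\ge 2$, where $\mathbf{A}_{X_0}\equiv 0$ and the naive normal-bundle argument is vacuous; however, the excerpt already notes that $\mathbf{A}_{X_0}\equiv 0$ identically in that regime, so elementarity fails in \emph{every} coordinate system and the definition is trivially consistent. The genuinely delicate part is verifying that the off-diagonal blocks of $D\phi$ do not contaminate the eigenvalues of the top-left $d\times d$ block when restricted to $\W_0$: this is where the block-triangularity (that $\partial w_i/\partial z_j \in \mathcal{I}_{\W_0}$ for $i\le d < j$) is essential, ensuring the induced map on $\nn_{\W_0}$ really is governed by $\mathbf{A}_{X_0}$ alone and not by the tangential components $P_{d+1},\ldots,P_n$. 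Once this block structure is established — exactly as in the proof of Lemma~\ref{lemafp} — the conjugation argument closes the proof.
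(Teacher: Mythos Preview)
Your proposal is correct and follows essentially the same route as the paper: both arguments reduce Definition~\ref{def2} to the conjugation $\mathbf{B}_{Y_0}=\mathbf{C}\,\mathbf{A}_{X_0}\,\mathbf{C}^{-1}$ on $\W_0$ (your normal-bundle endomorphism viewpoint is exactly this), and handle Definition~\ref{defm} by observing that the transformation cannot lower the minimum order and then invoking $\phi^{-1}$ for the reverse inequality. One small correction: your claim that $\partial w_j/\partial z_i \in \mathcal{I}_{\W_0}$ whenever ``$j\le d<i$ \emph{or} $i\le d<j$'' is false in the second case (only the components $w_1,\dots,w_d$ are forced to lie in $\mathcal{I}_{\W_0}$, so only the upper-right block of $D\phi$ vanishes along $\W_0$), but since in Step~2 you correctly appeal only to the direction $i\le d<j$ in your later notation---equivalently $j\le d<i$ here---the argument is unaffected.
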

\begin{proof}
In fact, it is enough to consider the biholomorphism $\Phi: U \to
\C^n$ given by $w=\Phi(z)=(\Phi_1(z),\ldots,\Phi_n(z))$ such that
$\Phi_i(z)\equiv 0$ for all $z\in\W_0\cap U$, for $i=1,\ldots,d$.
Let us admit that $\fol_0$ is described in an other coordinate
system by the following vector field
$$ Y_0(w)=\dps Q_1(w)\frac{\partial}{\partial
w_1}+\ldots+\dps Q_n(w)\frac{\partial}{\partial w_n}$$ where each
$Q_i(w)$ vanishing along $\{w_1=\ldots=w_d=0\}$. Let $\mathbf{B}_{Y_0}$ be the $d\times d$ complex
matrix given by
$$ \mathbf{B}_{Y_0} = \dps\bigg[\frac{\partial Q_i}{\partial w_j}\bigg]_{1\le i,j\le d}.$$

Since $\Phi$ is a local biholomorphism, we have that
$det(D\Psi|_{\W_0})\ne0$ and by consequence
$w_i=\dps\sum_{j=1}^{d}z_j\phi_{i,j}(z)$ for $i=1,\ldots,d$ and the
$d\times d$ complex matrix
$$\mathbf{C}=\dps\big[\phi_{i,j}\big]_{1\le i,j\le d}$$
is not singular for all $z\in\W_0\cap U$. In the same way,
$z=\Psi(w)=\Phi^{-1}(w)$ and
$z_i=\dps\sum_{j=1}^{d}w_j\psi_{i,j}(w)$ for some functions
$\psi_{i,j}$ and also for $i=1,\ldots,d$. Thus, $X_0=\Phi^*(Y_0)$ and
$\mathbf{B}_{Y_0}=\mathbf{C}\cdot \mathbf{A}_{X_0}\cdot\mathbf{C}^{-1}$,
which concluded that the definition (\ref{def2}) is independent of
the coordinate system chosen. But,
$$P_i\circ \Psi(w)=\dps\sum_{|a|=m_i}z_1^{a_1}\cdots z_{d}^{a_{d}}P_{i,a}(z)|_{z=\Psi(w)}=\dps\sum_{|a|=m_i}w_1^{a_1}\cdots w_{d}^{a_{d}}\widetilde{P}_{i,a}(w)$$ with some $\widetilde{P}_{i,a}(w)|_{\W_0}\not\equiv0$
which results

$$Q_i(w)=\left\{\begin{array}{ll}\dps\sum_{j=1}^{d}\phi_{i,j}\circ \Psi(w)\cdot P_j\circ \Psi(w),&1\le i\le d\\
                            \dps\sum_{j=1}^{n}\frac{\partial \Phi_i}{\partial z_j}\circ \Psi(w)\cdot P_j\circ \Psi(w),& d< i\le n.
           \end{array}\right.$$
Let $q_i:=\mbox{m}_{\W_0}(Q_i)$ for all $i=1,\ldots,n$. Now, let us
suppose by absurd that
$\mbox{m}_{\W_0}(\fol_0)\ne\mbox{min}\{q_1,\ldots,q_n\}$. Then,
applying the same arguments for the vector field $Y_0$ with
$z=\Psi(w)$ we will get
$\mbox{min}\{m_1,\ldots,m_n\}\ne\mbox{m}_{\W_0}(\fol_0)$.
\end{proof}

\par Let $\pi_1: U_1 \to U_0$  be the blowup of an open set $U_0$  along at $\W_0\cap U_0$ ,  with $\E_1$ is the exceptional divisor and $\fol_1$ is the strict transform of the foliation on $U_1$.
In order to simplify the notation, we will denote
$I_n=\{1,2,\ldots,n\}$ and $J_d=\{2,\ldots,d\}$. In the chart
$\big((U_1)_1,\sigma_1( u)\big)$, with coordinates $u=(u_i)\in\C^n$ (in the similar manner
in the others charts $\bigg(\big(U_1\big)_j,\sigma_j(v)\bigg)$ such that
\begin{equation}\label{for111}
z=\sigma_1(u)=(u_1,u_1u_2,\ldots,u_1u_d,u_{d+1},\ldots,u_n)=(z_1,\ldots,z_d,z_{d+1},\ldots,z_n)
\end{equation}
 and the pull-back foliation $\pi_1^*\fol_0$ is described by the
following vector field
\begin{equation}
\label{equ5en} \ddd_{\pi_1^*\fol_0}=\dps \sum_{i\in
I_n\setminus J_d}u_1^{m_i}\big(Q_i(u)+u_1\widetilde{P_i}(u)\big)\frac{\partial}{\partial
u_i} + u_1^{m_1-1}\sum_{i\in
J_d}\big(G_i(u)+u_1\widetilde{P_i}(u)\big)\frac{\partial}{\partial
u_i}
\end{equation}
with
$$Q_i(u)= \dps
\sum_{|a|=m_i}u_2^{a_2}\cdots u_{d}^{a_{d}}
p_{i,a}(u_{d+1},\ldots,u_n),\quad G_i(u)=Q_i(u)-u_iQ_1(u),$$
where $p_{i,a}(u_{d+1},\ldots,u_n)=P_{i,a}(0,\ldots,0,u_{d+1},\ldots,u_n)$
for certain functions $\widetilde{P_i}$. For more details, see
\cite{AG} or \cite{GH}.

As usual, we will say that $\W_0$ is a \textit{non-dicritical} component  if the
exceptional divisor $\E_1$ is invariant by $\fol_1$, otherwise
$\W_0$ is a \textit{dicritical} component.

After a division of (\ref{equ5en}) by an adequate power of $u_1$, we
obtain the expressions of the vector field $X_1$ that generates the
induced foliation $\fol_1$.

In the situation where $\W_0$ is a non-dicritical component of
$\sing(\fol_0)$, is described in the following three cases.

\noindent Case (i) : $m_n + 1 = m_1$ and $G_{j}\not\equiv 0$ for some
$j\in I_d$.In this situation, equation (\ref{equ5en}) is divided by $u_1^{m_1-1}$,

\begin{equation}
\label{equ6en} X_1=\dps\sum_{i\in
I_n\setminus J_d}u_1^{m_i-m_1+1}\bigg(Q_i(u)+u_1\widetilde{P_i}(u)\bigg)\frac{\partial}{\partial
u_i} + \sum_{i\in
J_d}^{d}\bigg(G_i(u)+u_1\widetilde{P_i}(u)\bigg)\frac{\partial}{\partial u_i}.
\end{equation}

This case has been widely studied in \cite{MAGR}, \cite{AG}, \cite{toulouse} and
\cite{indices}.

\noindent Case (ii) : $m_n + 1 < m_1$. As in the case before, dividing (\ref{equ5en}) by $u_1^{m_n}$ we get
\begin{align}
\label{equ7en}
X_1=&\dps
u_1^{m_1-m_n}\bigg(Q_1(u)+u_1\widetilde{P_1}(u)\bigg)\frac{\partial}{\partial
u_1} +
u_1^{m_1-m_n-1}\sum_{i\in
J_d}\bigg(G_i(u)+u_1\widetilde{P_i}(u)\bigg)\frac{\partial}{\partial
u_2}\cr
&+\sum_{i=d+1}^n\bigg(Q_i(u)+u_1\widetilde{P_i}(u)\bigg)\frac{\partial}{\partial
u_i}
\end{align}
In this case, the leaves of $\fol_1$ contained
in $\E_1$ can be locally described as follows 
\begin{equation}
\label{ft2}\bigg\{\varphi(t,p)= (\varphi_1(t),\varphi_2(t),\ldots,\varphi_n(t), \varphi(0,p)=p=(p_i)_i,t\in D(0,\epsilon)\bigg\}\end{equation}
with $\varphi_1(t)\equiv0$ and $\varphi_i(t)\equiv p_i$ are constant functions for all $i\in J_d$. In other words, these leaves are generically transverse to fibers
$\pi_1^{-1}(q)$, $q\in\W_0$.

By other side, the singular set of $\fol_1$ when it is restricted to $\E_1$ is given by the following $n-d$ equations 
\begin{equation}\label{sst2}
Q_i(0,u_2,\ldots,u_n)=0,\quad \mathrm{for} \quad i=d+1,\ldots,n
\end{equation}
which  can be generically solved for $n-d$ unknown variables $u_{d+1},\ldots, u_n$ as follows 
\begin{equation}
\label{defce}
\W_i=\{u\in\E_1| u_k=\psi_k(u_2,\ldots,u_d),  k>d\}.
\end{equation}
Thus, each singular component $\W_i$ has dimension equal to $d-1$ and is homeomorphic to $\P^{d-1}=\big\{ [u_1:u_2:\ldots :u_d]\big\}$.  Note that $\W_i$ can be given by the graph of the function $\Psi:\P^{d-1}\to \E_1$ locally defined as 
\begin{equation}\label{for112}
\Psi(u_1,u_2,\ldots,u_d)=(u_1,u_2,\ldots,u_d,\psi_{d+1}(u_2,\ldots,u_d),\ldots,\psi_n(u_2,\ldots,u_d)).
\end{equation}
with $u_1=0$ in this chart.

\noindent Case (iii) : $m_n = m_1$ and $G_{i_0}\not\equiv 0$ for some
$i_0\in I_d$.  We may divide (\ref{equ5en}) by $u_1^{m_1-1}$. As a
consequence, we obtain
\begin{equation}
\label{equ8en} X_1=\dps u_1\sum_{i\in
I_n\setminus I_d}\bigg(Q_i(u)+u_1\widetilde{P_i}(u)\bigg)\frac{\partial}{\partial u_i} +
\sum_{i\in
J_d}\bigg(G_i(u)+u_1\widetilde{P_i}(u)\bigg)\frac{\partial}{\partial
u_i}.
\end{equation}
Unlike the previous case, the leaves of $\fol_1$ restricted to the exceptional divisor are described below 
\begin{equation}
\label{ft3}\bigg\{\varphi(t,p)=(\varphi_1(t),\varphi_2(t),\ldots,\varphi_n(t)), \varphi(0,p)=p=(p_i)_i,t\in D(0,\epsilon\bigg\}\end{equation} such that $\varphi_1(t,0)\equiv0$ and $\varphi_i(t,p_i)\equiv p_i$ are constant functions for $i>d$. More precisely, the leaves of $\fol_1$ in $\E_1$ are tangent to the fibers $\pi_1^{-1}(q)$, $q\in\W_0$.
But in this situation,  the singular set of $\fol_1$  restricted to $\E_1$ is given by the following $d-1$ equations
\begin{equation}\label{sst3}
Q_i(0,u_2,\ldots,u_n)-u_iQ_1(0,u_2,\ldots,u_n)=0,\quad \mathrm{for} \quad i=2,\ldots,d
\end{equation}
which can be generically solved for $d-1$ unknown variables $u_2, \ldots, u_d$ as follows
\begin{equation}
\label{defce2}
\W_i=\{u\in\E_1| u_k=\psi_k(u_{d+1},\ldots,u_n),\quad k=2,\ldots d\}.
\end{equation}
Thus, each singular component $\W_i$ has dimension equal to $d-n$  and is homeomorphic to $\W_0$ with $\pi_1(\W_i)=\W_0$.  Note that $\W_i$ can be given by the graph of the function $\Psi:\W_0\to \E_1$ locally defined as
\begin{equation}\label{for113}
 \Psi(u_{d+1},\ldots,u_n)=(0,\psi_{2}(u_{d+1},\ldots,u_n),\ldots,\psi_d(u_{d+1},\ldots,u_n),u_{d+1},\ldots,u_n).
\end{equation}
since $\W_0$ is locally defined as $z_1=\ldots=z_d=0$  and $u_i=z_i$ for $i>d$.

In the case where $\W_0$ is a dicritical component, it is only
described by the following condition.

\noindent Case (iv) : $m_n = m_1$ and $G_j\equiv0$ for $j\in I_d$. Then, after the division of (\ref{equ5en}) by $u_1^{m_1}$ we get
\begin{equation}
\label{equ9en}  X_1=\dps\sum_{i\in
I_n}\bigg(Q_i(u)+u_1\widetilde{P_i}(u)\bigg)\frac{\partial}{\partial u_i}+
\sum_{i\in J_d}\widetilde{P_i}(u)\frac{\partial}{\partial u_i}.
\end{equation}
In this situation, the exceptional divisor $\E_1$ is not an
invariant set of $\fol_1$.
With the notation given in this section, the number
$\ell:=m_{\E_1}(\pi_1^*\fol_0)$ will be called the order of
annulment of $\pi_1^*\fol_0$ at $\E_1$ is defined as follows.
\begin{equation}
\label{for12} \ell=
\begin{cases}
{\rm min}\lbrace m_1-1, m_n \rbrace, & \mbox{ if } {\W_0 \mbox{ is non-dicritical}} \\
m_1,& \mbox{ if } {\W_0 \mbox{ is dicritical. }}
\end{cases}
\end{equation}
In particular, if $m_1=m_n$ then
\begin{equation}
\label{for18} \ell=
\begin{cases}
\mbox{m}_{\W_0}(\fol_0)-1,& \mbox{ if } {\W_0 \mbox{ is non-dicritical}} \\
\mbox{m}_{\W_0}(\fol_0),& \mbox{ if } {\W_0 \mbox{ is dicritical. }}
\end{cases}
\end{equation}

\begin{definition}\label{defep}
Let $\W_0$ be a non-dicritical component of $\sing(\fol_0)$. We will say that $\W_0$ is of type I, II and III if
$m_n+1=m_1$, $m_n+1< m_1$ and $m_n=m_1$, respectively.
\end{definition}

\begin{definition}
The foliation $\fol_0$ will be called {\it special } along $\W_0$ if
the induced foliation $\fol_1$ has the exceptional divisor
$\E_1=\pi_1^{-1}(W_0)$ as an invariant set, and ${\sing}(\fol_1)$
meets $\E_1$ at isolated singularities at most.
\end{definition}
\begin{remark}\rm  If $\fol_0$ is special along $\W_0$ then necessary $\W_0$ is of type I and  $G_{j}\not\equiv 0$ for all
$j\in I_d$.
\end{remark}
As in \cite{AG}, we will denote by $\nn(\fol_0,Z)$ the singularity number of $\fol_0$ in $Z$, counted with multiplicities,
where $Z$ is a smooth subvariety of $\pn$ of arbitrary
dimension which is an invariant set of $\fol_1$. 
\begin{subsection}{Chern classes} Let us consider a blow-up sequence $\pi_j:\M_{j}\to \M_{j-1}$  along a smooth curve $\W_{j-1}$, with exceptional
divisor $\E_j$ such that $\W_j\subset \E_j$ for all $j\ge1$. Furthermore, we will admit that $\W_j$ is homeomorphic to $\W_{j-1}$ and $\pi_j(\W_j)=\W_{j-1}$. Set $\nn_j:=\nn_{\W_j/\M_j}$ the normal bundle of $\W_j$ in $\M_j$  and $\rho_j:=\pi_j|_{\E_j}$. Since
$\E_j \cong \P(\nn_{j-1})$, recall that $A(\E_j)$ is generated as an
$A(\W_{j-1})$-algebra by the Chern class
$$
\zeta_j := c_1(\oo_{\nn_{j-1}}(-1))
$$
with the single relation
\begin{equation}
\label{equdch} 
\dps\sum_{i=0}^{n-1}(-1)^i\zeta_j^{n-1-i}\cdot\rho_j^*c_i(\nn_{j-1})=0.
\end{equation}
The normal bundle $\nn_{\E_j/\M_j}$ agrees with the tautological bundle $\oo_{\nn_{j-1}}(-1)$, and hence
\begin{equation}
\label{equzet}
\zeta_j=c_1(\nn_{\E_j/\M_j})=[\E_j].
\end{equation}
If $\iota_j :\E_j\hookrightarrow\M_j$ is the inclusion map, we also get
\begin{equation}
\label{equslf}
{\iota_j}_{*}(\zeta_j^i)=(-1)^i [\E_j]^{i+1}.
\end{equation}
Given that
$$
\dps\int_{\E_j}\rho^*c_{i}(\nn_{j-1})\zeta_j^{n-i-1} =
(-1)^{n-i-1}\dps\int_{\W_{j-1}}c_i(\nn_{j-1}) = 0
$$
for $i\ge 2$, we have
\begin{align}
\label{forEE}
\int_{E_j}\zeta_j^{n-1} &= \int_{E_j}\rho^*c_1(\nn_{j-1})\zeta_j^{n-2} = (-1)^{n} \int_{\W_0}c_1(\nn_{j-1}) \\
                    &=(-1)^n\int_{\W_{j-1}}c_1(\tt_{M_{j-1}}\otimes\oo_{\W_{j-1}})-c_1(\tt_{\W_{j-1}}):= (-1)^n\Lambda_j^{(n)} \nonumber
\end{align}

In particular, for $\M_0=\pn$ we have that 
\begin{equation}\label{forlm}
\Lambda_0^{(n)}= (n+1)\deg(\W_0)-\chi(\W_0),  
\end{equation} 
where $\chi(\W_0)$ is the Euler characterist of $\W_0$.
By other side, from Porteous' Theorem (see \cite{IP}), it holds that
\begin{equation}
\label{relcc} c_1(\tt_{\M_j}) =  \pi_j^*c_1(\tt_{\M_{j-1}}) - (n-2)\E_j.
\end{equation}

In particular, from Whitney formula, we have that

\begin{equation}
\label{equpor} 
c_1(\tt_{\W_{j}}) + c_1(\nn_{j})\bigg|_{\W_{j}}=c_1(\tt_{M_{j}})\bigg|_{\W_{j}} = \bigg(\pi_{j}^*c_1(\tt_{\M_{j-1}}) - (n-2)\E_{j}\bigg)\bigg|_{\W_{j}}
\end{equation}
which results for $j=1$ that
\begin{equation}
\label{for20} \int_{\E_2}\zeta_2^{2}=(-1)^n\int_{\W_{1}}c_1(\nn_{1})= (-1)^n\int_{\W_{1}} \bigg(\pi_1^*c_1(\tt_{\M_0}) -c_1(\tt_{\W_{1}})-(n-2)\E_1\bigg).
\end{equation}
Therefore, given that $\W_1$ is homeomorphic to $\W_0$ and $\pi_1(\W_1)=\W_0$ then it is not hard to see that 
\begin{equation}\label{for322}
\dps\int_{\W_{1}}\E_1 = \frac{\Lambda_0^{(n)}}{n-1}
\end{equation}
since $\chi\big(\P^{n-2}\big)=n-1$. In fact,
\begin{equation}\label{for1322}
\dps\int_{\W_{1}}\E_1 = \frac{(-1)^{n-2}}{n-1}\dps\int_{\W_{1}}\E^{n-1}_1=\frac{\Lambda_0^{(n)}}{n-1}
\end{equation}

Thus, the equations (\ref{for20}) and (\ref{for322}) make us to conclude that

\begin{equation}
\label{for21} \int_{\E_2}\zeta_2^{n-1} = (-1)^n\bigg(\Lambda_0^{(n)} -\frac{n-2}{n-1}\Lambda_0^{(n)} \bigg)= \frac{\Lambda_0^{(n)}}{n-1}.
\end{equation}

Now, we will consider a finite induction hypothesis as follows

\begin{equation}
\label{for22} \int_{\E_j}\zeta_j^{n-1} = (-1)^n\frac{\Lambda_0^{(n)}}{(n-1)^{j-1}},\quad  \int_{\W_j}\E_j = \frac{\Lambda_0^{(n)}}{(n-1)^{j}}
\end{equation}
for $j=1,\ldots,k$. So, from (\ref{equpor}), we obtain that
\begin{equation}
\label{for25} 
\begin{array}{lcl}
\dps\int_{\E_{k+1}}\zeta_{k+1}^{n-1}  &=&(-1)^n \dps\int_{\W_{k}}  c_1(\nn_{k}))=(-1)^n\int_{\W_{k}} c_1(\tt_{\M_k}) -c_1(\tt_{\W_{k}})\\
                                        &=&(-1)^n\bigg(\dps\int_{\W_{k}}  \pi_k^*c_1(\tt_{M_{k-1}})-c_1(\tt_{\W_{k}})-(n-2)\E_k\bigg)\\
                    &=&(-1)^n\bigg(\dps\int_{\W_{k-1}}c_1(\tt_{M_{k-1}})-c_1(\tt_{\W_{k-1}}) -(n-2) \int_{\W_{k}}\E_k\bigg)\\
                        &=&(-1)^n\bigg(\dps\int_{\W_{k-1}}  c_1(\nn_{k-1}))-(n-2) \int_{\W_{k}}\E_k\bigg)\\
                        &=&(-1)^n\bigg(\dps\frac{\Lambda_0^{(n)}}{(n-1)^{k-1}}-\frac{n-2}{(n-1)^k}\Lambda_0^{(n)}\big) \\
                        &=&(-1)^n\dps\frac{\Lambda_0^{(n)}}{(n-1)^k}
\end{array}
\end{equation}

From this fact, we get 

\begin{equation}\label{for26}
\int_{\W_{k+1}}\E_{k+1} = \frac{(-1)^{n-2}}{n-1}\int_{\W_{k+1}}\E^{n-1}_{k+1}=\frac{\Lambda_0^{(n)}}{(n-1)^{k+1}}.
\end{equation}
                                                  
\end{subsection}
\begin{theorem}\label{theo2.8} Let $\fol_0$ be a holomorphic foliation by curves on $\M_0=\pn$ such that
$$\sing(\fol_0)=\W_0\cup\{p_1,\ldots,p_r\}$$
where each $p_i$ is a closed point and $\W_0=Z(f_1,\ldots,f_d)$  is a
codimension-$d$ smooth variety with  $k_j=\deg(f_j)$. Let $\pi_1:\M_1\to\M_0$ be
the blow-up of $\pn$ centered along $\W_0$, $\E_1=\pi_1^{-1}(\W_0)$and
$\fol_1$ the induced foliation by $\fol_0$ via $\pi_1$.
\begin{itemize}
\item [(a)] If $\fol_0$ is special along $\W_0$ then

$$\nn(\fol_1,\E_1)=-\nu(\fol_0,\W_0,\psi_a)$$
where
$${\psi}_{a}(x)=\big((1+x)^{d-a_1}-1\big)x^{n-d-a_2-1}.$$

\item [(b)] In addition, if all the singularities of $\fol_1$ are isolated closed points then
$$\nn(\fol_1,\M_1)=\sum_{i=0}^{n}k^i+\nu(\fol_0,\W_0,\varphi_a-\psi_a).$$
\end{itemize}
\end{theorem}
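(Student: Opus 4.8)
The plan is to realize both singularity numbers as top Chern numbers and then to reorganize the resulting intersection products, via a projective-bundle pushforward, into the double sum defining $\nu$.

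For part (a), since $\fol_0$ is special along $\W_0$, the Remark forces $\W_0$ to be of type I with $G_j\not\equiv 0$ for all $j\in I_d$, so by (\ref{equ6en}) the exceptional divisor $\E_1$ is $\fol_1$-invariant and $\sing(\fol_1)\cap\E_1$ is finite; moreover $\ell=m_{\E_1}(\pi_1^*\fol_0)=m_n=m_1-1$ by (\ref{for12}). Because $\E_1$ is invariant, the generating section of $\fol_1$ restricts to a section of $(T\E_1\otimes L_1)|_{\E_1}$ on the $(n-1)$-dimensional compact manifold $\E_1$, where $L_1$ is the line bundle with $\fol_1\in H^0(\M_1,T\M_1\otimes L_1)$; its zero scheme is exactly $\sing(\fol_1)\cap\E_1$, so by Poincar\'e--Hopf/Baum--Bott
$$\nn(\fol_1,\E_1)=\int_{\E_1}c_{n-1}\big(T\E_1\otimes L_1|_{\E_1}\big).$$
The first technical step is to identify $L_1$: since $\fol_0$ is a degree-$k$ foliation, $T_{\fol_0}=\oo(1-k)$, and dividing $\pi_1^*\fol_0$ by its order-$\ell$ vanishing along $\E_1$ gives $c_1(L_1)=(k-1)\pi_1^*\h-\ell\,\E_1$, hence $c_1(L_1|_{\E_1})=(k-1)\rho^*\h-\ell\,\zeta_1$ after using (\ref{equzet}), where $\rho=\pi_1|_{\E_1}$.

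Next I would expand the integrand. Writing $\E_1=\P(\nn)$ with $\nn=\nn_{\W_0}$, the tangent bundle fits in $0\to T_{\E_1/\W_0}\to T\E_1\to\rho^*\tt_{\W_0}\to 0$ together with the relative Euler sequence, so $c(T\E_1)$ is a product of $\rho^*c(\tt_{\W_0})$, $\rho^*c(\nn)$ and powers of $\zeta_1$. Using $c_{n-1}(E\otimes L)=\prod_i(x_i+c_1(L))$ for the Chern roots $x_i$ of $T\E_1$, the top Chern class becomes a polynomial in $\zeta_1$, $\rho^*\h$ and the classes $\sigma_i^{(d)}\h^i=c_i(\nn)$, $\tau_i^{(d)}\h^i=c_i(\tt_{\W_0})$. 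I expect the factor $(1+x)^{d-a_1}$ in $\psi_a$ to arise from the $d$ normal directions of $\W_0$ entering through $c(\nn)$ and the twist, while $x^{n-d-a_2-1}$ records the remaining fiber/twist directions; the subtraction of $1$ in $\psi_a=((1+x)^{d-a_1}-1)x^{n-d-a_2-1}$ should reflect that the direction normal to $\E_1$ in $\M_1$ (the term $u_1Q_1\,\partial_{u_1}$ in (\ref{equ6en})) vanishes identically on the invariant divisor and so contributes no zero, lowering the relevant degree in $x$ by one relative to $\varphi_a$.

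The decisive step is the pushforward $\int_{\E_1}=\int_{\W_0}\rho_*$. Because $\W_0=Z(f_1,\ldots,f_d)$ is a complete intersection, $\nn=\bigoplus_j\oo_{\W_0}(k_j)$, so $\rho_*\zeta_1^{\,j}$ is the $j$-th Segre class of $\nn$, which is the complete symmetric function $\mathcal{W}^{(d)}_\delta(k_1,\ldots,k_d)$; the binomial expansion of $c_1(L_1|_{\E_1})=(k-1)\rho^*\h-\ell\,\zeta_1$ produces exactly the factors $(k-1)^m$ and the Taylor coefficients $\varphi_a^{(m)}(\ell)/m!$ with $x$ specialised to $\ell$. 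Collecting the terms and normalising by $\deg(\W_0)$ should reassemble the expression into $-\nu(\fol_0,\W_0,\psi_a)$. I expect this combinatorial reorganisation --- showing that the pushed-forward product collapses into the precise double sum with the polynomial $\psi_a$ --- to be the main obstacle, since it demands tracking the Taylor-coefficient bookkeeping of the twist through the Segre-class identities.

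For part (b), when every singularity of $\fol_1$ is isolated we have $\nn(\fol_1,\M_1)=\int_{\M_1}c_n(T\M_1\otimes L_1)$. I would compute this global integral using the blow-up relation $c_1(T\M_1)=\pi_1^*c_1(\tt_{\M_0})-(n-2)\E_1$ of (\ref{relcc}) together with $c_1(L_1)=(k-1)\pi_1^*\h-\ell\,\E_1$, and then push forward along $\pi_1$. The terms carrying no power of $\E_1$ reproduce the classical count $\int_{\pn}c_n(T\pn\otimes\oo(k-1))=\sum_{i=0}^n k^i$ of singularities of a degree-$k$ foliation, while the exceptional terms push forward to $\W_0$ and, by the very same Segre-class computation as in (a), contribute $\nu(\fol_0,\W_0,\varphi_a)-\nu(\fol_0,\W_0,\psi_a)$; since $\nu$ is linear in its functional argument this equals $\nu(\fol_0,\W_0,\varphi_a-\psi_a)$, giving the claim. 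Equivalently one may split $\sing(\fol_1)=(\sing(\fol_1)\cap\E_1)\sqcup(\sing(\fol_1)\setminus\E_1)$, apply (a) to the first part, and use the biholomorphism $\pi_1:\M_1\setminus\E_1\to\pn\setminus\W_0$ for the second, noting that the portion $-\nu(\fol_0,\W_0,\varphi_a)$ of the virtual total $\sum_{i=0}^n k^i$ concentrates at $\W_0$, so that the isolated points off $\E_1$ account for $\sum_{i=0}^n k^i+\nu(\fol_0,\W_0,\varphi_a)$.
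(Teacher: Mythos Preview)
Your proposal is correct and follows essentially the same route as the paper, which delegates the full Chern-class/Segre-class computation on $\E_1=\P(\nn)$ and on $\M_1$ to \cite{AG} (specifically equations (24), (37), (40), (41) there) and then only records the two binomial identities
\[
\sum_{j=|a|+1}^{n}\binom{n-j}{m}\binom{d-a_1}{j-|a|-1}\ell^{\,n-j-m}=\frac{\psi_a^{(m)}(\ell)}{m!},\qquad
\sum_{j=|a|}^{n}\binom{n-j}{m}\binom{d-a_1}{j-|a|}\ell^{\,n-j-m}=\frac{\varphi_a^{(m)}(\ell)}{m!},
\]
which is precisely the ``combinatorial reorganisation'' you flagged as the main obstacle. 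The only remark is that your closing ``equivalently'' for part~(b), invoking that $-\nu(\fol_0,\W_0,\varphi_a)$ concentrates at $\W_0$, is circular here (that is the content of Theorem~\ref{theorem1}(a), proved \emph{from} the present result), so you should rely on your primary argument via $\int_{\M_1}c_n(T\M_1\otimes L_1)$ and the blow-up Chern-class formula.
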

\begin{proof}
To proof the item (a) see the proof of theorem 3.2 in \cite{AG}. More precisely, see equations (24), (37), (40) and (41) in \cite{AG}. To proof the item (b), it is enough to observe the equation (37) in \cite{AG} which is written below

$$
\nn(\fol_1,\M_1)=\dps\sum_{i=0}^{n}k^i + \deg(\W_0)\dps\sum_{|a|=0}^{n-d}\sum_{j=|a|}^{n}\sum_{m=0}^{n-d-|a|}(-1)^{\delta_{|a|}^{m}}{{n-j}\choose{m}}\Gamma_{a}^{j}\ell^{n-j-m}(k- 1)^m\sigma_{a_1}^{(d)}\tau_{a_2}^{(d)}\mathcal{W}_{\delta_{|a|}^{m}}^{(d)}
$$
where 
\begin{equation}
\label{gamma}
\Gamma^{j}_{a}={{d-a_1}\choose{j-|a|-1}}-{{d-a_1}\choose{j-|a|}}.
\end{equation}
and $\ell$ given by (\ref{for12}). Here we are assuming that 
$\dps{{p}\choose{q}}=0$ if $p<q$ or $q<0$.
However,
$$ \sum_{j=|a|+1}^{n}{{n-j}\choose{m}}{{d-a_1}\choose{j-|a|}-1}\ell^{n-j-m}=\frac{{\psi}_{a}^{(m)}(\ell)}{m!},$$
$$\dps\sum_{j=|a|}^{n}{{n-j}\choose{m}}{{d-a_1}\choose{j-|a|}}\ell^{n-j-m}=\frac{\varphi_{a}^{(m)}(\ell)}{m!}$$
which yields
$$\nn(\fol_1,\M_1)=\sum_{i=0}^{n}k^i+\nu(\fol_0,\W_0,\varphi_a-\psi_a).$$
\end{proof}
\section{Fundamental Lemma of deformation}

In this section, we will present a generalization for the
Fundamental Lemma of deformation given in \cite{AG}. Essentially, we
will treat the case in which the singular set of $\fol_0$ contains a
dicritical component.

\begin{lema}\label{lema4}
Let $\fol_0$ be a one-dimensional holomorphic foliation on $\M_0=\pn$,
$n\ge 3$, of degree $k$. Suppose that
$$
 \mbox{Sing}(\fol_0) = \W_0 \cup\{p_1,\ldots,p_s\},
$$
where $\W_0=Z(f_{1},\ldots,f_{d})$ is a smooth complete
intersection of $\pn$ and $p_j$ are isolated points disjoint to
$\W_0$. As before, $\pi_1:\M_1\to\M_0$ is the blow-up of $\pn$  along  $\W_0$, with an exceptional divisor $\E_1$. Then, there exists a one-parameter family of one-dimensional holomorphic foliations on
$\pn$, denoted by $\{\fol_t\}_{t\in D}$ where
$D=D(0,\epsilon)=\{t\in\C\,:\,|t|<\epsilon\}$ such that
\begin{enumerate}
\item[(i)] $\dps\lim_{t\to0}\fol_t=\fol_0$;
\item[(ii)] $\deg(\fol_t)=\deg(\fol)$ for all $t\in D$;
\item[(iii)] If $\mbox{m}_{\W_0}(\fol_0)=1$ then $\W_0$ is an invariant set of $\fol_t$ for any $t\in D\setminus\{0\}$.
\item[(iv)] If $\mbox{m}_{\W_0}(\fol_0)\ge2$ then $\sing(\fol_t) = \W_0 \cup\{p_1^t,\ldots,p_{s_t}^t\}$ and $\fol_t$ is special along $\W_0$ for any $t\in D\setminus\{0\}$;
\item[(v)] For any $t\in D\setminus\{0\}$, the order of annulment of $\pi_1^{*}(\fol_t)$ at $\E_1$ is
$$ m_{\E_1}(\pi_1^*\fol_t)=\begin{cases}
m_{\E_1}(\pi_1^*\fol_0)& \mbox{ if } {\W_0 \mbox{ is non-dicritical}} \\
m_{\E_1}(\pi_1^*\fol_0)-1& \mbox{ if } {\W_0 \mbox{ is dicritical.
}}
                         \end{cases}$$
\end{enumerate}
\end{lema}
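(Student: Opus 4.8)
\emph{Plan of proof.} The plan is to construct the family $\{\fol_t\}$ by an explicit one-parameter perturbation of the homogeneous vector field defining $\fol_0$, kept inside the degree-$k$ system so that (ii) holds automatically, and with its leading terms along $\W_0$ forced into the special, type I normal form of Definition \ref{defep}. Writing $\fol_0$ as a global section $v_0$ of $T\pn\otimes\oo(k-1)$ with local expression \eqref{fol}--\eqref{for40}, I would set $v_t=v_0+t\,\eta$, where $\eta$ is assembled from the defining data $f_1,\ldots,f_d$ of $\W_0$ (their differentials, the Euler field, and suitable homogeneous multipliers) so that $\deg(v_t)=k$ for every $t$. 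Property (i) is then immediate and (ii) is built in, so the entire content lies in the choice of the local leading part of $\eta$.

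For (iii), when $m_{\W_0}(\fol_0)=1$ I would take $\eta$ tangent to $\W_0$ and nonvanishing along it; then $v_t|_{\W_0}$ is a nonzero field tangent to $\W_0$ for $t\neq0$, so $\W_0$ leaves the singular set and becomes invariant. For (iv), when $m_{\W_0}(\fol_0)\ge2$ I would keep $\eta$ vanishing along $\W_0$ but prescribe its leading forms to realize the type I condition $m_n^t+1=m_1^t$ together with $G_j^t\not\equiv0$ for all $j\in I_d$. Concretely, in the normalization $m_{\W_0}(P_j)=m_1$ for $j\le d$ and $=m_n$ for $j>d$, I would add generic non-radial degree-$(m_n+1)$ terms in the normal components $\partial/\partial z_1,\ldots,\partial/\partial z_d$ (lowering $m_1$ to $m_n+1$ in the type II case, and breaking the radial leading part that characterizes the dicritical case (iv) of \S2), and, in the type III and dicritical cases where $m_1=m_n$, also add degree-$(m_n-1)$ terms in the tangent components $\partial/\partial z_{d+1},\ldots,\partial/\partial z_n$, so that $m_n^t$ drops to $m_n-1$ and the gap $m_1^t=m_n^t+1$ is created. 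A Bertini/transversality argument on the coefficients of $\eta$ then guarantees that, for generic small $t$, the only non-isolated part of $\sing(\fol_t)$ is $\W_0$, with all remaining singular points isolated and with $G_j^t\not\equiv0$ for every $j$; combined with the type I leading forms, this is exactly the special condition, giving (iv).

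Property (v) is read off from the leading terms of $v_t$ through the order-of-annulment formula \eqref{for12}, the pullback model \eqref{equ5en} being the tool that converts leading-term data into statements about $m_{\E_1}(\pi_1^*\fol_t)$. In the non-dicritical cases the perturbation leaves $m_n$ unchanged (types I, II) or replaces $m_1=m_n$ by $m_1^t=m_n,\ m_n^t=m_n-1$ (type III); in either case $\ell^t=\min\{m_1^t-1,m_n^t\}$ equals $m_n$ (types I, II) or $m_n-1$ (type III), which is precisely $\ell=m_{\E_1}(\pi_1^*\fol_0)$. In the dicritical case the undeformed value is $\ell=m_1=m_n$, while the deformed foliation is non-dicritical of type I with $m_n^t=m_n-1$, hence $\ell^t=m_n-1=\ell-1$, the stated drop.

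The main obstacle I anticipate is the dicritical case of (iv)--(v): there the perturbation must simultaneously break the radial leading structure in the normal directions (to force $\E_1$ invariant and all $G_j^t\not\equiv0$) and lower the tangential multiplicity $m_n$ by one (to produce the type I gap and the drop $\ell^t=\ell-1$), all while $\eta$ remains a global degree-$k$ field built from $f_1,\ldots,f_d$ and $\lim_{t\to0}v_t=v_0$ is preserved. The delicate point is to verify, via the local model \eqref{equ5en}, that this two-fold modification introduces no new non-isolated component of $\sing(\fol_t)$ off $\W_0$ and that the multiplicity genuinely stabilizes at exactly $m_n-1$ for generic $t$ rather than collapsing further; this is where the complete-intersection hypothesis on $\W_0$ and the genericity of the coefficients of $\eta$ are essential. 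The non-dicritical part can be treated as in \cite{AG}, so the genuinely new work is concentrated in this case.
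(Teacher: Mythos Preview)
Your overall strategy is the same as the paper's---perturb $v_0$ by $t\eta$ with $\eta$ built from the complete-intersection data so that (i) and (ii) are automatic, and choose the leading multiplicities of $\eta$ to force the type~I special form---but the specific multiplicity prescription you give fails in the dicritical (and, to a lesser extent, the type~III) case, and this is precisely the case you flag as the main obstacle.

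In the dicritical case you have $m_1=m_n$ and $G_j\equiv0$ at order $m_n$. Your recipe adds \emph{degree-$(m_n+1)$} terms in the normal directions and degree-$(m_n-1)$ terms in the tangential directions. The tangential perturbation indeed gives $m_n^t=m_n-1$, but the normal perturbation, being of order $m_n+1>m_n=m_1$, leaves the degree-$m_n$ normal leading part untouched. Hence $G_j^t$, computed from the degree-$m_1^t=m_n$ normal terms, is still identically zero for every $j\in J_d$. The blown-up foliation $\fol_1^t$ restricted to $\E_1$ then has only the tangential components $\sum_{i>d}Q_i^t\,\partial_{u_i}$, so $\sing(\fol_1^t)\cap\E_1$ is cut out by $n-d$ equations in $n-1$ variables and has dimension $d-1\ge1$: $\fol_t$ is \emph{not} special, and (iv) fails. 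Your computation for (v) happens to give the right value $\ell^t=\ell-1$, but without specialness the lemma cannot feed into Theorem~\ref{theo2.8} and the proof of Theorem~\ref{theorem1}. A similar problem arises in type~III when some but not all of the original $G_j$ vanish: your degree-$(m_n+1)$ normal perturbation again leaves the leading $G_j$'s unchanged.

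The fix, and what the paper actually does, is to perturb the normal components at order $1+\ell$ and the tangential components at order $\ell$, with $\ell=m_{\E_1}(\pi_1^*\fol_0)$ in the non-dicritical case and $\ell=m_{\E_1}(\pi_1^*\fol_0)-1$ in the dicritical case. In the dicritical situation this means $q_1=\cdots=q_d=m_n$ (not $m_n+1$), so the generic perturbation genuinely overwrites the radial degree-$m_n$ normal leading part and produces $G_j^t\not\equiv0$ for all $j$; combined with $q_{d+1}=\cdots=q_n=m_n-1$ one gets type~I with generic $G_j^t$, hence special. The same uniform prescription $q_{\mathrm{normal}}=1+\ell$, $q_{\mathrm{tangent}}=\ell$ handles types~I--III simultaneously. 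Once you correct the normal multiplicity in this way your outline goes through.
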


\begin{proof}
Consider that foliation $\fol_0$ is described by the
polynomial vector field
$X_0=\dps\sum_{i=1}^{n}P_i(z)\frac{\partial}{\partial z_i}$ in some
open affine set $U_j\subset \pn$. Given that $\W_0$ is smooth
variety, by a reordering of the variables, if necessary, we can
admit that the $d\times d$ matrix
$$\mathbf{M} = \dps\bigg[\frac{\partial f_i}{\partial z_j}\bigg]_{1\le i,j\le d}$$
is not singular in some open set $U$ such that $\W_0\cap
U\neq\emptyset$. Therefore, $F:U\subseteq U_j\to V\subseteq \C^n$
defined as

$$w=F(z)=(f_1(z),\ldots,f_d(z),z_{d+1},\ldots,z_n)$$
is a local biholomorphism. Furthermore, the image $F(\W_0\cap
U)=\widetilde{\W_0}$ is defined as $w_1=\ldots=w_d=0$. Let
$\G_0=F_*(\fol_0)$ be the push-forward foliation defined in $V$ which
is described by the vector field
$$
X_{\G_0}=Q_1(w)\,\frac{\partial}{\partial w_1}+\ldots
+Q_n(w)\,\frac{\partial}{\partial w_n}
$$
where
\begin{equation}
\label{equgil} Q_i(w)=\sum_{|a|=m_i}w_1^{a_1}\cdots
w_{d}^{a_{d}}Q_{i,a}(w)
\end{equation}
with at least one $Q_{i,a}(z)$ not vanishing at $\widetilde{\W_0}$. Thus,
\begin{equation}
 Q_{i}\circ F(z)=\left\{\begin{array}{ll}
                                    \dps\sum_{j=1}^{n}\frac{\partial f_i}{\partial z_j}\cdot P_j(z),&i=1,\ldots,d\cr
                                    P_i(z),&i=d+1,\ldots,n
                                  \end{array}\right.
\end{equation}

Solving this system and applying the factor $\det(\mathbf{M})$ for
normalizing, we can admit that
\begin{equation}\label{formPi}
P_{i}(z)=\left\{\begin{array}{ll}
                                    \det(A_i(z)),&i=1,\ldots,d\cr
                                    \det(\mathbf{M})\cdot Q_{i}\circ F(z),&i=d+1,\ldots,n
                   \end{array}\right.
\end{equation}
where $A_i$ is obtained replacing the $i$-th column of $DF$ by the
vector column $(Q_1\circ F(z),\ldots,Q_n\circ F(z))$.

We consider the one-dimensional holomorphic foliation $\G_{t}$
defined in $V$ described by the following vector field
\begin{equation}\label{deformation}
X_{\G_{t}}=X_{\G_0}+t\cdot Y(w)
\end{equation}
where
$$Y(w)=Y_1(w)\,\frac{\partial}{\partial w_1}+\ldots +Y_n(w)\,\frac{\partial}{\partial w_n}
$$
with
$$Y_i(w)=\sum_{|a|=q_i}w_1^{a_1}\cdots
w_{d}^{a_{d}}Y_{i,a}(w),\quad q_i=\mbox{m}_{\widetilde{\W_0}}(Y_i)$$
with at least one $Y_{i,a}(z)$ not vanishing at $F(\W_0\cap U)$ for
all $i$ such that
\begin{equation}
\label{multp}
q_1=q_2=\ldots=q_d=q_{d+1}+1=\ldots=q_{n}+1=1+\ell.
\end{equation}
where 
$$\ell = \left\{\begin{array}{ll}
                      \mbox{m}_{\E_1}(\pi_1^*\fol_0),&\mbox{ if }\W_0 \mbox{ non-dicritical}\cr
                      \mbox{m}_{\E_1}(\pi_1^*\fol_0)-1,&\mbox{ if }\W_0 \mbox{ dicritical}
                      \end{array} \right.$$

The one-parameter family of one-dimensional holomorphic foliations
$\fol_t$ is defined as pull-back of $\G_t$, $\fol_t=F^*\G_t$.
Therefore, $\fol_t$ is described by the vector field $X_t$ as
follows
$$ X_t = \dps\sum_{i=1}^{n}P_i^t(z)\frac{\partial}{\partial z_i}$$
where $P_i^t(z)$ is obtained from (\ref{formPi}) chancing $Q_i\circ
F(z))$ by $Q_i\circ F(z))+tY_i\circ F(z)$. It is immediate that $\dps\lim_{t\to0}\fol_t=\fol_0$. The vector
field $X_t$ is polynomial since each $P_i$ and $F$ they are also.
Then, we can consider $\fol_t$ defined in the open affine set $U_j$
by Hartogs Extension Theorem. The proof of (i) is immediate. The
functions $Y_{i,a}$ are chosen so that $\deg(\fol_t)$ be equal to
$\deg(\fol_0)$. And for that, some functions $Y_i$ may be constant,
null or an affine linear in variables $w_{d+1},\ldots,w_n$, proving
(ii). If $\mbox{m}_{\W_0}(\fol_0)=1$ then in (\ref{multp}) $q_i=0$
for $i>d$ which results that $\W_0$ is an invariant set of $\fol_t$
for $t\neq0$. Otherwise, if $\mbox{m}_{\W_0}(\fol_0)\ge2$, $\W_0
\subset\sing(\fol_t)$ for all $t\in D$. Furthermore, shrinking
$\epsilon$, if necessary, we can admit that $\sing(\fol_t)$ contains
isolated closed points disjoint from $\W_0$, since $F$ is a local biholomorphism. By construction, for $\fol_t$, with $t\ne0$,  the curve $\W_0$ is of type I, which means that by changing some coefficients of
$Y$ we can admit that $\fol_t$ is special along $\W_0$ for $t\in
D\setminus\{0\}$, proving (iv). Finally, for $t\neq0$,
$\mbox{m}_{\E_1}(\pi_1^*\fol_t)=q_n=\mbox{mult}_{\W_0}(\fol_0)-1$.
Thus, from (\ref{for18}) we get (v).
\end{proof}
\subsection{Embedded closed points}
Lemma \ref{lema4} allows us to determine the embedded closed points associated with $\W_0$ more effectively. In fact, let $\fol_t$ be a {\it special} deformation of $\fol_0$ given by the Lemma \ref{lema4} for $t\in D(0,\epsilon)$. Thus,  or $\W_0\subset\sing(\fol_t)$ or $\W_0$ is $\fol_t$-invariant for all $t\ne0$. Whatever,  we can assume that all the isolated singularities of $\fol_t$ are non-degenerate, so we set
\begin{equation}
A_{\W_0}:=\{p^{t}_{j}\in\sing(\fol_t): \displaystyle\lim_{t\to
0}p^{t}_{j}\in \W_0,\quad\text{such that}\quad p^{t}_{j}\not\in\W_0,
\forall t\ne0\} .
\end{equation}
where each $p^{t}_{j}$ is isolated point.  We will
indicate by $ N (\fol_0, A_{\W_0}) $ the number of elements $p^{t}_
{j}\in A_{\W_0} $, counted with multiplicities, and such points are called embedding points
associated to $\W_0$.

\subsection{Proof of Theorem \ref{theorem1}}

Let $\fol_0$ be a foliation by curves defined on $\pn$ of degree $k$
such that
$$
 \mbox{Sing}(\fol_0) =\W_0 \cup\{p_1,\ldots,p_s\},
$$
which $\W_0=Z(f_1,\ldots,f_m)$.  By Lemma \ref{lema4}, there is a
one-parameter family of holomorphic foliations by curves on $\pn$, given by $\{\fol_t\}_{t\in D}$ where $D=D(0,\epsilon)$
satisfies $(i)-(v)$ conditions. Thus, for
$\mbox{m}_{\W_0}(\fol_0)\ge2$, we have
$$\sing(\fol_t)=\W_0\cup\{p_1^t,\ldots,p_{s_t}^t\}$$
where each $p_i^t$ is a closed point. Let $\pi_1:\M_1\to\M_0$ be the blowup of $\M_0=\pn$ along  $\W_0$ being $\E_1$ and
$\widetilde{\fol_t}$ the exceptional divisor and the induced
foliation in $\M_1$, respectively. Thus,
$$\sing(\widetilde{\fol_t})=\{\widetilde{p_1^t},\ldots,\widetilde{p_{r_t}^t}\}.$$
Since $\pi_1:\M_1\setminus\E_1\to\M_0\setminus\W_0$ is a
biholomorphism, we have that

$$\dps\sum_{j=1}^{s}\mu(\fol_0,p_j)=\lim_{t\to0}\sum_{\dps\lim_{t\to0}\widetilde{p_j^t} \notin
\E_1}\mu(\widetilde{\fol_t},\widetilde{p}_j^t).$$  By other hand,

$$\dps\sum_{j=1}^{s}\mu(\fol_0,p_j)=\nn(\widetilde{\fol_t},\M_1) - \lim_{t\to0}\sum_{\dps\lim_{t\to0}\widetilde{p}_j^t \in
\E_1}\mu(\widetilde{\fol_t},\widetilde{p}_j^t).$$

\par Given that $\fol_t$ is special along $W_0$ for $t\ne0$, from Theorem \ref{theo2.8}, we get

$$\begin{array}{lcl}
\dps\sum_{j=1}^{s}\mu(\fol_0,p_j)&=&\nn(\widetilde{\fol_t},\M_1) -
\nn(\widetilde{\fol_t},\E_1) - N(\fol,A_{\W_0})\cr
 &=&\dps\sum_{i=1}^{n}k^i+\nu(\fol_0,\W_0,\varphi_a-\psi_a)+\nu(\fol_0,\W_0,\psi_a)-N(\fol,A_{\W_0}).
\end{array}$$ Then,

$$\dps\sum_{j=1}^{s}\mu(\fol_0,p_j)=\dps\sum_{i=1}^{n}k^i +\nu(\fol_0,\W_0,\varphi_a) - N(\fol,A_{\W_0}).$$

If $\ell=\mbox{m}_{\E_1}(\pi_1^*\fol_0)=0$, then $\W_0$ is
$\fol_t$-invariant, resulting in
$$\dps\sum_{j=1}^{s}\mu(\fol_0,p_j)=\lim_{t\to0}\sum_{\dps\lim_{t\to0} p_j^t \notin
\W_0}\mu(\widetilde{\fol_t},p_j^t).$$
Therefore,
$$\dps\sum_{j=1}^{s}\mu(\fol_0,p_j)=\nn(\fol_t,\pn) - \lim_{t\to0}\sum_{\dps\lim_{t\to0} p_j^t \in
\W_0}\mu(\fol_t,p_j^t).$$
Thus,
$$\dps\sum_{j=1}^{s}\mu(\fol_0,p_j)=\nn(\fol_t,\pn) - \nn(\fol_t,\W_0) - N(\fol,A_{\W_0}).$$

By \cite{AG}, we get
$$\dps\sum_{j=1}^{s}\mu(\fol_0,p_j)=\dps\sum_{i=1}^{n}k^i
+\nu(\fol_0,\W_0,\varphi_a)|_{\ell=0} - N(\fol,A_{\W_0}).$$
It concludes the prove of Items (a) and (b) of Theorem \ref{theorem1}.
Then,
$$\dps\mu(\fol_0,\W_0)=-\nu(\fol_0,\W_0,\varphi_a) + N(\fol_0,A_{\W_0}).$$

Note that  $N(\fol_0,A_{\W_0})$ is a finite number since $\fol_0$ is represented by a polynomial vector field and $$N(\fol_0,A_{\W_0})\le \nn(\widetilde{\fol}_t,\M_1)=\sum_{i=0}^{n}k^i+\nu(\fol_0,\W_0,\varphi_a-\psi_a).$$
By the same way,
$$\dps\mu(\fol_1,\bigcup_i\W_i^{(1)})=\nn(\F_1,\E_1)+N(\fol_1,A_{\E_1})=-\nu(\fol_0,\W_0,\psi_a)+N(\fol_1,A_{\E_1})$$
where each $\W_i^{(1)}$ is a connected component of $\sing(\fol_1)$ contained in $\E_1$.
But, since
$\pi_1|_{\M_1\setminus\E_1}:\M_1\setminus\E_1\to\M_0\setminus\W_0$ is
a biholomorphism, we get $N(\fol_0,A_{\W_0})=N(\fol_1,A_{\E_1})$,
i.e.,

$$\mu(\fol_1,\bigcup_i\W_i^{(1)})=\dps\mu(\fol_0,\W_0)+\nu(\fol_0,\W_0,\varphi_a)-\nu(\fol_0,\W_0,\psi_a)$$
But,
$$\nu(\fol_0,\W_0,\varphi_a)-\nu(\fol_0,\W_0,\psi_a)=\nu(\fol_0,\W_0,\varphi_a-\psi_a)=\nu(\fol_0,\W_0,\vartheta_a)$$
which concluded the proof of Item (c) of Theorem \ref{theorem1}.
\begin{remark} \rm In theorem \ref{theorem1}, we have in mind that  if $\W_i^{(1)}\cap \W_j^{(1)}\neq\emptyset$ then $\mu(\fol_1,\W_i^{(1)}\cup \W_j^{(1)})=\mu(\fol_1,\W_i^{(1)})+\mu(\fol_1,\W_j^{(1)})-N(\fol_1,\W_i^{(1)}\cap \W_j^{(1)}).$
\end{remark}

\begin{remark} \rm If $\W_0=Z(f_1,\ldots,f_n)$ with $\mbox{deg}(f_j)=1$
for $j=1,\ldots,n$ then $\W_0$ is an isolated closed point. Theorem \ref{theorem1} assures that
$$\mu(\fol_1,\bigcup_i\W_i^{(1)})=\mu(\fol_0,\W_0)+ \vartheta_{0}(\ell)$$
where
$$\vartheta_{0}(\ell)=(1+\ell)^n+\dps\frac{1-(1+\ell)^n}{\ell}=(1+\ell)^n-\sum_{j=0}^{n-1}(1+\ell)^j.$$
Thus, if $\W_0$ is a non-dicritical component then
$\ell=\mbox{m}_{\W_0}(\fol_0)-1$ which results
$$\mu(\fol_1,\bigcup_i\W_i^{(1)})=\mu(\fol_0,\W_0)+(\mbox{m}_{\W_0}(\fol_0))^n-\sum_{j=0}^{n-1}(\mbox{m}_{\W_0}(\fol_0))^j.$$

On the other hand, if $\W_0$ is a dicritical component, then
$\ell=\mbox{m}_{\W_0}(\fol_0)$ resulting in the following result.

$$\mu(\fol_1,\bigcup_i\W_i^{(1)})=\mu(\fol_0,\W_0)+(1+\mbox{m}_{\W_0}(\fol_0))^n-\sum_{j=0}^{n-1}(1+\mbox{m}_{\W_0}(\fol_0))^j.$$
These results agree with \cite{rb} for the case which $\W_0$ is
absolutely isolated singularity. However, these expressions remain true
even if the singular set of $\fol_1$ contains components of positive dimension.
\end{remark}
\begin{remark} \rm  If $\W_0=Z(f_1,\ldots,f_{n-1})$ with
$\mbox{deg}(f_j)=k_j$ then $\W_0$ is a smooth curve. Thus,
$\tau^{(n-1)}_1=\big(n+1-\sum_{j=1}^{n-1}{k_j}\big)$ and
$\sigma_1^{(n-1)}=\sum_{j=1}^{n-1}{k_j}$, resulting in
$\chi(\W_0)=\tau^{(n-1)}_1\cdot\mbox{deg}(\W_0)$. By Theorem
\ref{theorem1}, we get
$$\mu(\fol_1,\bigcup_i\W_i^{(1)})=\mu(\fol_0,\W_0)+\chi(\W_0)\bigg(\sum_{j=0}^{n-3}(1+\ell)^j-\ell^2(1+\ell)^{n-2}\bigg)+$$
$$(1+\ell)^{n-2}\mbox{deg}(\W_0)\bigg((n-n\ell-2)(k-1)+(n+1)(\ell^2-\ell)\bigg),$$
where
$$\ell=\left\{\begin{array}{ll}
              m_{\E_1}\big(\pi_1^*\fol_0\big),& \mbox{ if } {\W_0 \mbox{ is non-dicritical}}\cr
              m_{\E_1}\big(\pi_1^*\fol_{0}\big)-1,&\mbox{ if } {\W_0 \mbox{ is dicritical}}
              \end{array}\right. $$
\end{remark}
\begin{example} \rm  Let us consider the one-dimensional holomorphic
foliation $\fol_0$ of degree $3$ defined on $\M_0=\P^3$ described in the
open affine set $U_3=\{[\xi_i]\in\P_3|\xi_3\neq0\}$ by the vector field
$$X=\big(P_2(z)+P_3(z)\big)\frac{\partial}{\partial z_1}+\big(Q_2(z)+Q_3(z)\big)\frac{\partial}{\partial
z_2}+R_2(z)\frac{\partial}{\partial z_3}$$ where
$P_i(z)=\sum_{j=0}^{i}p_{ij}z_1^{i-j}z_2^j$,
$Q_i(z)=\sum_{j=0}^{i}q_{ij}z_1^{i-j}z_2^j$,
$R_2(z)=\sum_{j=0}^{2}r_{j}(z)z_1^{2-j}z_2^j$ with
$p_{ij},q_{ij}\in\C$,
$r_{j}(z)=\alpha_jz_1+\beta_jz_2+\gamma_jz_3+\delta_j$
$z_i=\xi_{i-1}/\xi_3$.  We will also assume
$z_1Q_2(z)-z_2P_2(z)\equiv0$ which results that
$\W_0=\{[\xi]\in\P_3|\xi_0=\xi_1=0\}$ is a dicritical component of
$\sing(\fol_0)$. Thus, $P_2(z)=p_{20}z^2+p_{11}z_1z_2$ and
$Q_2(z)=p_{20}z_1z_2+p_{11}z_2^2$. Let $p_i(\lambda)=P_i(1,\lambda)$
and $q_i(\lambda)=Q_i(1,\lambda)$ for $i=2,3$. We admit that both $p_2$ and $p_3$ have no roots in common. Beyond
this curve, the singular set of $\fol_0$ has 8 more isolated points,
counting the multiplicities. Let $\pi_1:\M_1\to\M_0$, $\E_1$ and
$\fol_1$ as before. It is not difficult to see that
$m_{\E_1}(\pi_1^*\fol_0)=2$ and $\sing(\fol_1)$ has 12 isolated
points, counting the multiplicities, 4 of them in $\E_1$.

Let $\fol_t$ be the one-parameter family of holomorphic foliation on
$\P^3$ described by the vector field
$$X_t=X+t\bigg(z_3A_2(z)\frac{\partial}{\partial z_1}+z_3B_2(z)\frac{\partial}{\partial
z_2}+C_1(z)\frac{\partial}{\partial z_3}\bigg)$$ where
$A_2(z)=a_0z_1^2+a_1z_1z_2+a_2z_2^2$,
$B_2(z)=b_0z_1^2+b_1z_1z_2+b_2z_2^2$ and
$C_1(z)=z_1c_0(z)+z_2c_1(z)$ being each $c_i$ an affine linear
function. Varying the coefficients of $A_2,B_2$ and $C_1$ if necessary, we can admit that $\fol_t$ is special along $\W_0$, $\mbox{m}_{\E_1}(\pi_1^*\fol_t)=1$ and
$\mbox{deg}(\fol_t)$=3 for each $t\neq0$. Let $\folt_t$ be the induced foliation on $\M_1$
by $\fol_t$ via $\pi_1$. Therefore, for $t\ne0$ the singular set of $\fol_t$  has 20
isolated closed points, counting the multiplicities since $\nn(\folt_t,\E_1)=10$ and $\nn(\folt_t,\M_1)=30$. See Theorem \ref{theo2.8}.  As consequence, $N(\fol_0,A_{\W_0} )=12$  because
the foliation $\fol_1$ has 8 isolated point outside the exceptional divisor. Therefore, we get

$$\mu(\fol_0,\W_0)=-\dps\lim_{t\to0}\nu(\fol_t,\W_0,\varphi_a)+N(\fol_0,A_{\W_0})=20+ 12 =32.$$

This result is totally coherent and compatible, since a generic
perturbation of $\fol_0$ with the same degree of $\fol_0$ will
produce 40 isolated singularities, which means that the curve $\W_0$
corresponds to 32 isolated singularities. However, if $\W_0$ is a
dicritical component of $\sing(\fol_0)$ with
$\mbox{m}_{\E_1}(\pi_1^*\fol_0)=2$ and $\deg(\fol_0)=3$, then
$\mu(\fol_0,\W_0)\ge20$.

\end{example}

\section{Holomorphic foliations on $\P^n$}

In this section, we will only consider
a one-dimensional holomorphic foliation $\fol_0$ of degree $k$ defined
in $\M_0=\P^n$, $n\ge3$, such that its singular locus $\sing(\fol_0)$ contains a smooth curve $\W_0$ of degree $\deg(\W_0)$ and the Euler
characteristic $\chi(\W_0)$.

For each point $p\in\W_0$ there is an open set $U_0\subset\M_0$ and $n-1$ polynomials $f_1,\ldots,f_{n-1}$ such that $p\in U_0$ and $\W_0\cap U_0$ are defined as $f_1=\ldots=f_{n-1}=0$ since $\W_0$ is also a smooth curve, which implies that $\W_0$ is a complete local intersection (lci). Shrinking an open set $U_0$ or reordering the variables, if necessary,  we can admit that $\varphi_0: U_0\to V_0\subset\C^n$ defined as $$z=\varphi_0(w)=(f_1(w),\ldots,f_{n-1}(w),w_n)$$ is a local  biholomorphism. Therefore, in $V_0$ the curve $\W_0$ is 
is given by $z_1=\ldots=z_{n-1}=0$ and $\fol_0$ is described by the following vector field  
\begin{equation}\label{for114}
X_0=P_1(z)\frac{\partial}{\partial z_1}+P_2(z)\frac{\partial}{\partial z_2}+\ldots+P_n(z)\frac{\partial}{\partial z_n}
\end{equation}
where each
$$P_i(z)=\sum_{j=m_i}P_{ij}(z),\quad P_{ij}(z)=\sum_{|a|=j}z_1^{a_1}\cdots z_{n-1}^{a_{n-1}}p_{i,j,a}(z_n),\quad m_i=\mbox{m}_{\W_0}(P_i)$$
with $a=(a_1,\ldots,a_{n-1})\in \Z^{n-1}$ and $m_n\le m_{n-1}=\ldots=m_2=m_1$. At first, we assume that there is a blow-up sequence $\{\pi_i, \M_i,\W_i,\fol_i,\E_i\}$ where $\W_i\subset\E_i$ is a homeomorphic curve to $\W_{i-1}$ and $\pi_i(\W_i)=\W_{i-1}$ for all $i\ge1$.
In the chart $\big(U_1,\sigma_1(u)\big)$, the pull-back foliation $\pi_1^*\fol_0$ is described by the following vector field
\begin{equation}\label{for120}
X_1 =u_1^{m_1}Q_1(u)\frac{\partial}{\partial u_1}+ u_1^{m_1-1}\sum_{i=2}^{n-1}\big(Q_i(u)-u_iQ_1(u)\big)\frac{\partial}{\partial u_i}+ u_1^{m_n}Q_n(u)\frac{\partial}{\partial u_n}
\end{equation}
where 
$$ Q_i(u)=\sum_{j=m_i}u_1^{j-m_i}Q_{ij}(u),\quad Q_{ij}(u)=P_{ij}(1,u_2,\ldots,u_n).$$
By hypothesis, there is a curve $\W_1\subset\sing(\fol_1)$ which is locally defined
as $$u_1=u_2-\psi_2(u_n)=\ldots=u_{n-1}-\psi_{n-1}(u_n)=0$$ for certain functions $\psi_i$, for $i=2,\ldots,n-1$.  To continue with this blow-up process, we need to rewrite each $Q_{ij}$ as follows
    $$Q_{ij}(u)=\sum_{|a|=n_{ij}}\big(u_2 -\psi_2(u_n)\big)^{a_1}\cdots\big(u_{n-1} -\psi_{n-1}(u_n)\big)^{a_{n-1}}\widetilde{h}_{i,j,a}(u),\quad n_{ij}=\mbox{m}_{\W_1}(Q_{ij})$$
where $0\le n_{ij}\le j$ and or $\widetilde{h}_{i,j,a}|_{\W_1}\not\equiv0$ or $\widetilde{h}_{i,j,a}\equiv0$ .

If $\W_0$ is of type I then  in $(U_1,\sigma_1(u))$,  the foliation $\fol_1$ is described  by the following vector field 
\begin{equation}\label{for121}
X_1 =u_1Q_1(u)\frac{\partial}{\partial u_1}+ \sum_{i=2}^{n-1}\big(Q_i(u)-u_iQ_1(u)\big)\frac{\partial}{\partial u_i}+ Q_n(u)\frac{\partial}{\partial u_n}
\end{equation}

In this situation, the singular set of $\fol_1$ restricted to $\E_1$ is defined by the equations
$$ u_1=Q_{2,m_1}(u)-u_2Q_{1,m_1}(u)=\ldots=Q_{n-1,m_1}(u)-u_{n-1}Q_{1,m_1}(u)=Q_{n,m_n}(u)=0$$
which may be composed of curves and points. But,  in the coordinate system 
\begin{equation}\label{nscoord}
 v=F(u)=(u_1,u_2-\psi_2(u_n),\ldots,u_{n-1}-\psi_{n-1}(u_n), u_n)
 \end{equation}
the vector field (\ref{for121}) is rewritten  as follows    
\begin{equation}\label{for122}
X_1 =v_1R_1\frac{\partial}{\partial v_1}+ \sum_{i=2}^{n-1}\bigg(R_i-\big(v_i+\psi_i(v_n)\big)R_1-\psi_i^{\prime}(v_n)R_n\bigg)\frac{\partial}{\partial v_i}+ R_n\frac{\partial}{\partial v_n}
\end{equation}
where 
$$ R_i(v)=\sum_{j=m_i}^{\infty}v_1^{j-m_i}R_{ij}(v),\quad R_{ij}(v)=\sum_{|a|=n_{ij}}v_1^{a_1}\cdots v_{n-1}^{a_{n-1}}h_{i,j,a}(v)$$
where $h_{i,j,a}=\widetilde{h}_{i,j,a}\circ F^{-1}$.

Now, if $\W_0$ is of type II then in the chart $\big((U_1)_1, \sigma_1(u)\big)$ the foliation $\fol_1$ is described by the following
vector field as in (\ref{equ7en}), i.e.,

\begin{equation}
\label{exemp2} X_1=
u_1^{m_1-m_n}Q_1(u)\frac{\partial}{\partial
u_1} +
u_1^{m_1-m_n-1}\sum_{i=2}^{n-1}\big(Q_i(u)-u_iQ_1(u)\big)\frac{\partial}{\partial
u_i}\
+Q_n(u)\frac{\partial}{\partial
u_n}.
\end{equation}
On the exceptional divisor $\E_1$,  the leaves of $\fol_1$ are generically defined as 
$u_1=u_2-\alpha_2=\ldots=u_{n-1}-\alpha_{n-1}=0$, where each $\alpha_i$ is a scalar, while its
singular set is defined as $u_1=Q_{n,m_n}(u)=0$. Thus, in this coordinate system (\ref{nscoord}), the vector field  in (\ref{exemp2}) is given by

\begin{equation}\label{for123}
X_1 =v_1^{m_1-m_n}R_1\frac{\partial}{\partial v_1}+v_1^{m_1-m_n-1}\bigg(\sum_{i=2}^{n-1}\big(R_i-\big(v_i+\psi_i(v_n)\big)R_1\big)-\psi_i^{\prime}(v_n)R_n\bigg)\frac{\partial}{\partial v_i}+ R_n\frac{\partial}{\partial v_n}
\end{equation}
with $R_i$ as in (\ref{for122}).
By other side, for generically fixed $u_2,\ldots,u_{n-1}$, there are singular points defined as $u_1=0$ and $u_n=\psi_n(u_2,\ldots,u_{n-1})$
for some function $\psi_n$. Therefore, by continuity, in the singular set of $\fol_1$  there is at least one variety homeomorphic to $\P^{n-2}$. 

If $\W_0$ is of type III then the foliation $\fol_1$ is
described by the following vector field as in (\ref{equ8en}),
\begin{equation}\label{examp3}
X_1=u_1Q_1(u)\frac{\partial}{\partial
u_1} +
\sum_{i=2}^{n-1}\bigg(Q_i(u)-u_iQ_1(u)\bigg)\frac{\partial}{\partial u_i}+u_1Q_n(u)\frac{\partial}{\partial
u_n}
\end{equation}

In this situation, the leaves of $\mathcal{F}_1$ on $\E_1$ are contained in $u_1=u_n-\alpha_n=0$, where $\alpha_n$ is a scalar, while its singular set is defined by $$u_1=Q_{2,m_1}(u)-u_2Q_{1,m_1}(u)=\ldots=Q_{n-1,m_1}(u)-u_{n-1}Q_{1,m_1}(u)=0.$$ Moreover, when $u_n$ is generically fixed, there are singular points which are defined by $$u_1=u_2-\psi_2(u_n)=\ldots=u_{n-1}-\psi_{n-1}(u_n)=0,$$ for some functions $\psi_i$. Consequently, within the singular set of $\mathcal{F}_1$, there exist curves that are homeomorphic to $\W_0$. In the coordinate system (\ref{nscoord}), the vector field in Equation (\ref{examp3}) can be expressed as follows:
\begin{equation}\label{for124}
X_1 =v_1R_1\frac{\partial}{\partial v_1}+ \sum_{i=2}^{n-1}\bigg(R_i-\big(v_i+\psi_i(v_n)\big)R_1-v_1\psi_i^{\prime}(v_n)R_n\bigg)\frac{\partial}{\partial v_i}+ v_1R_n\frac{\partial}{\partial v_n}
\end{equation}
where $ R_i$ is as defined in Equation (\ref{for122}).
 However, unlike the previous case, a variety homeomorphic to $\P^{n-2}$ may not appear unless $$Q_i(0,u_2,\ldots,u_{n-1},\alpha_n)-u_iQ_1(0,u_2,\ldots,u_{n-1},\alpha_n)\equiv0$$ for some $i=2,\ldots,n-1$, 
 for some scalar $\alpha_n$.

Finally, if $\W_0$ is a dicritical component of $\sing(\fol_0)$ then in the chart $(U_1,\sigma_1(u))$, the foliation $\fol_1$ is described by the following vector field
\begin{equation}\label{examp4}
X_1=Q_1(u)\frac{\partial}{\partial
u_1} +
\sum_{i=2}^{n-1}G_i(u)\frac{\partial}{\partial u_i}+Q_n(u)\frac{\partial}{\partial
u_n}
\end{equation}
where 
$$G_i=\dps\sum_{j=m_1+1}^{\infty}u_1^{j-m_1-1}\bigg(Q_{ij}(u)-u_iQ_{1j}(u)\bigg).$$
Thus, $\E_1$ is no longer an invariant set by $\fol_1$ and the singular set $\sing(\fol_1)$ restricted to $\E_1$ is given by equations $$u_1=Q_{1,m_1}(u)=Q_{i,m_1+1}(u)-u_iQ_{1,m_1+1}(u)=Q_{n,m_n}(u)=0$$
for $i=2,\ldots,n-1$. Therefore, in the coordinate system (\ref{nscoord}), the vector field (\ref{examp4}) can be rewritten as follows
\begin{equation}\label{for800}
X_1 =R_1\frac{\partial}{\partial v_1}+ \sum_{i=2}^{n-1}\bigg(\sum_{j=m_1+1}v_1^{j-m_1-1}G_{ij}(v) -\psi_1^{\prime}(v_n)R_n\bigg)\frac{\partial}{\partial v_i}+ R_n\frac{\partial}{\partial v_n}
\end{equation}
where $G_{ij}(v)=R_{ij}(v)-(v_i+\psi_i(v_n))R_{1j}(v)$, $R_i$ and $R_{ij}$ are defined as in (\ref{for122}).

\begin{proposition}\label{propl} Let $\{\pi_j,\M_j,\W_j,\fol_j,\E_j\}$ be a blow-up sequence such that $\W_j$ is homeomorphic to $\W_{j-1}$ with $\pi_j(\W_j)=\W_{j-1}$, where $\W_0$ is a smooth curve and $\M_0=\P^n$. Then

$$m_{\W_j}(\fol_j)\le 1 +m_{\W_0}(\fol_0),\quad \forall j\ge 1.$$
\end{proposition}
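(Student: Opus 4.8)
The plan is to track how the multiplicity $m_{\W_j}(\fol_j)$ evolves across a single blow-up and argue that it can rise by at most one, so that the bound $m_{\W_j}(\fol_j)\le 1+m_{\W_0}(\fol_0)$ follows by induction on $j$. The key observation is that the local structure of $\fol_1$ near the new singular curve $\W_1\subset\E_1$ is already laid out in the case analysis preceding the proposition, in formulas (\ref{for122}), (\ref{for123}), (\ref{for124}) and (\ref{for800}), where the strict transform is written in the adapted coordinate system $v=F(u)$ of (\ref{nscoord}) in which $\W_1$ is the locus $v_1=v_2=\ldots=v_{n-1}=0$. So the heart of the argument is to read off $m_{\W_1}(\fol_1)$ directly from the coefficients $R_i$, $R_{ij}$ in each of these normal forms and compare it with $m_{\W_0}(\fol_0)=m_n$.

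\textbf{First} I would recall that $m_{\W_0}(\fol_0)=m_n$ with $m_1=\ldots=m_{n-1}\ge m_n$, and note from the definition of the $R_{ij}$ in (\ref{for122}) that $m_{\W_1}(R_{ij})=n_{ij}\le j$, where $n_{ij}=\mathrm{m}_{\W_1}(Q_{ij})$ satisfies $0\le n_{ij}\le j$. The term $R_i=\sum_{j\ge m_i}v_1^{j-m_i}R_{ij}$ then has $\W_1$-multiplicity at least… well, the lowest-order contribution comes from the summand with the smallest $(j-m_i)+n_{ij}$, and since $n_{ij}\le j$ this is controlled by the structure of the $Q_{ij}$. \textbf{Then} in each case I would bound the multiplicity of every component of the defining vector field of $\fol_1$: for the type-I form (\ref{for122}) the extra factors $\psi_i(v_n)$, $\psi_i'(v_n)$ are holomorphic in $v_n$ alone and hence have $\W_1$-multiplicity zero, so they do not lower the multiplicity of the bracketed expressions below that of the $R_i$; the factor $v_1$ in the first and (in type III) last components contributes exactly one to the multiplicity of that coordinate function, which is precisely the mechanism that can push $m_{\W_1}(\fol_1)$ up to one more than $m_{\W_0}(\fol_0)$.

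\textbf{The main obstacle} will be to show cleanly that no case produces an increase of two or more: one must verify that the division by the power of $u_1$ performed to pass from the pull-back (\ref{equ5en}) to the strict transform has already absorbed the ``expected'' drop, and that the remaining powers of $v_1$ appearing in (\ref{for122})–(\ref{for800}) contribute at most one unit of multiplicity along $\W_1$. Concretely, I would argue that $m_{\W_1}(\fol_1)=\min_i m_{\W_1}(\text{$i$-th component})$, and in each case the minimizing component is the one whose coefficient has the least $v_1$-power together with the least transverse order, giving $m_{\W_1}(\fol_1)\le m_n+1$. \textbf{Finally}, feeding $m_{\W_1}(\fol_1)\le m_{\W_0}(\fol_0)+1$ into the inductive hypothesis is not immediate, because a naive induction would only yield $m_{\W_j}(\fol_j)\le m_{\W_0}(\fol_0)+j$; so the crux is really to prove the sharper single-step statement that $m_{\W_j}(\fol_j)\le m_{\W_{j-1}}(\fol_{j-1})$ \emph{unless} the multiplicity was already $1$, i.e. that once $m_{\W_j}(\fol_j)$ reaches $m_{\W_0}(\fol_0)+1$ it cannot grow further. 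This is consistent with the stabilization phenomenon discussed after Theorem \ref{theoremB}, and I would establish it by observing that in the adapted form the leading coefficient $R_{1,m_1}$ (or the corresponding $Q_{n,m_n}$) cannot vanish identically along $\W_1$ without contradicting the genericity of the solved equations (\ref{sst2})–(\ref{sst3}) that define $\W_1$, thereby pinning the multiplicity at its bound.
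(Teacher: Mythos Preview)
You correctly identify the central difficulty: the naive single-step bound $m_{\W_j}(\fol_j)\le m_{\W_{j-1}}(\fol_{j-1})+1$ only yields $m_{\W_0}(\fol_0)+j$ after iteration. But your proposed remedy---the ``sharper single-step statement'' that the multiplicity does not increase once it exceeds $1$---is not proved, and your justification via ``genericity of the solved equations (\ref{sst2})--(\ref{sst3})'' does not establish it. In the type-III form (\ref{for124}) with $m_1=m_n=m\ge 2$, the equations (\ref{sst3}) defining $\W_1$ involve only $Q_{i,m}-u_iQ_{1,m}$ for $2\le i\le n-1$; they impose no constraint whatsoever on the vanishing order of $Q_{n,m}$ along $\W_1$, so nothing you have written rules out $n_{n,m_n}=m$ (hence $m_{\W_1}(v_1R_n)=m+1$) together with all remaining components vanishing to the same order. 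The two formulations you offer (non-increase when $\ge 2$, versus ``cannot grow once it reaches $m_{\W_0}(\fol_0)+1$'') are also not equivalent, and neither is substantiated.

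The paper avoids controlling $m_{\W_j}(\fol_j)$ step by step and instead tracks an auxiliary invariant: the multiplicity of the distinguished $n$-th component alone. In every normal form (\ref{for122})--(\ref{for800}) one has $P_n^{(j)}=R_n^{(j)}$ or $P_n^{(j)}=v_1R_n^{(j)}$, and the argument is that $m_{\W_j}(R_n^{(j)})\le m_n$ for \emph{all} $j$. This is a pure degree bound: restricting $R_n^{(j)}$ to $\{v_1=0\}$ leaves only the lowest term, a polynomial of degree at most $m_{\W_{j-1}}(R_n^{(j-1)})$ in the transverse variables $v_2,\dots,v_{n-1}$, so its vanishing order along $\W_j$ is bounded by that same number. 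One obtains a non-increasing chain $m_{\W_j}(R_n^{(j)})\le m_{\W_{j-1}}(R_n^{(j-1)})\le\cdots\le m_n$, and then $m_{\W_j}(\fol_j)\le m_{\W_j}(P_n^{(j)})\le 1+m_n$ for every $j$. The quantity that genuinely survives the blow-up tower is the multiplicity of this single component, not the minimum over all components; isolating it is the idea your sketch is missing.
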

\begin{proof}
In fact, let us consider the foliation $\fol_i$ is described by the following vector field

$$ X_i = P_1^{(i)}(v)\frac{\partial }{\partial v_1}+P_2^{(i)}(v)\frac{\partial }{\partial v_2}+\ldots+P_n^{(i)}(v)\frac{\partial }{\partial v_n}$$
where $X_0$ is as in (\ref{for114}) and $X_1$ as in (\ref{for122}), (\ref{for123}),(\ref{for124}) or (\ref{for800}), depending on the type of $\W_1$ is. But, whatever the case, $P_n^{(1)}(v)=R_n^{(1)}(v)$ or $P_n^{(1)}(v)=v_1R_n^{(1)}(v)$.
 Since $$R_n^{(1)}(0,v_2,\ldots,v_n)=\sum_{|a|=n_{n,m_n}}v_2^{a_1}\cdots a_{n-1}^{a_{n-1}}h_{n,m_n,a}^{(1)}(v)$$  with $n_{n,m_n}\le m_n$. Thus, we have that $m_{\W_1}(R_n^{(1)}) \le m_n = m_{\W_0}(\fol_0)$ which results
\begin{equation}\label{for125}
m_{\W_1}(\fol_1)\le m_{\W_1}(v_1R_n^{(1)}(v))= 1 + m_{\W_1}(R_n^{(1)}) \le 1 +m_{\W_0}(\fol_0).
\end{equation}

Now, in the chart $\big((U_2)_1,v=\sigma_1(t))$, the foliation $\fol_2$ is described by the following vector field
\begin{equation}\label{for126}
X_2= P_1^{(2)}(t)\frac{\partial}{\partial t_1}+P_2^{(2)}(t)\frac{\partial}{\partial t_2}+\ldots+P_n^{(2)}(t)\frac{\partial}{\partial t_n}
\end{equation}
where $P_n^{(2)}(t) = R_n^{(2)}(t)$ or $P_3^{(2)}(t) =t_1 R_n^{(2)}(t)$ depending of the type that $\W_1$ is with

$$ R_n^{(2)}(t)=\sum_{j=m_n}^{\infty}\sum_{|a|=n_{n,m_n}^{(2)}}t_1^{a_2+\ldots+a_{n-1} +j-m_n-\beta_n}t_2^{a_2}\cdots t_{n-1}^{a_{n-1}}h_{n,j,a}^{(2)}(t),$$
where $\beta_n=\mbox{m}_{\W_1}(R_n^{(1)})$, $n_{n,m_n}^{(2)}=\mbox{m}_{\W_2}(R_n^{(2)})$ and $h_{n,j,a}^{(2)}(t)=h_{n,j,a}^{(1)}\circ\sigma_1$.

Let $\mathcal{M}= \{a=(a_1,\ldots,a_{n-1})| a_2+\ldots+a_{n-1} +j-m_n-\beta_n =0\}$ i.e.; if $a\in\mathcal{M}$ then $$a_2+\ldots+a_{n-1}=\beta_n-(j-m_n)\le \beta_n.$$ Again by hypothesis, there is the curve $\W_2$ which is defined as $$t_1 = t_2 - \psi_2^{(2)}(t_n)=\ldots=t_{n-1} - \psi_{n-1}^{(2)}(t_n)=0$$ contained in the singular set of $\fol_2$. But,
\begin{align*}
R_n^{(2)}(0,t_2,\ldots,t_n)&= \sum_{a\in\mathcal{M}}t_2^{a_2}\cdots t_{n-1}^{a_{n-1}}h_{n,j,a}^{(2)}(0,\ldots,0,t_n)\cr
&=\sum_{|a|=n_{n,m_n}^{(2)}}(t_2-\psi_2^{(2)}(t_n))^{a_2}\cdots (t_{n-1}-\psi_{n-1}^{(2)}(t_n))^{a_{n-1}}\widetilde{h}_{n,m_n,a}^{(2)}(t_2,\ldots,t_n)
\end{align*}
which results 
$$ \mbox{m}_{\W_2}(R_n^{(2)}) \le n_{n,m_n}^{(2)}\le m_n \le m_{\W_1}(R_n^{(1)}).$$
Therefore, 
\begin{equation}\label{for130}
m_{\W_2}(\fol_2) \le m_{\W_2}(P_n^{(2)})\le 1 + m_{\W_1}(R_n^{(2)})\le 1 + m_{\W_0}(\fol_0).
\end{equation}
Note that if $m_{\W_1}(P_n^{(2)}) > m_{\W_1}(P_1^{(2)})$ we can use the same arguments for $P_1^{(2)}=t_1R_1^{(2)}$ instead of $P_n^{(2)}.$

Continuing in this manner, we obtain:
 
\begin{equation}\label{for131}
m_{\W_i}(\fol_i) \le  1 + m_{\W_0}(\fol_0),\quad \forall i\ge 1.
\end{equation}
\end{proof}
\begin{theorem}
\label{theoremt} Let $\fol_0$ be a holomorphic foliation by curves
of degree $k$ defined on $\M_0=\P^3$ such that
$$\W_0\subset \sing\big(\fol_0\big)$$
where $\W_0$ is a smooth
curve with Euler characteristic $\chi(\W_0)$, degree $\deg(\W_0)$ and
a non-dicritical component of $\sing(\fol_0)$. Let
$\pi_1:\M_1\to\M_0$ be the blow-up of $\M_0$ along $\W_0$, with $\E_1=\pi_1^{-1}(\W_0)$ the exceptional divisor, $\fol_1$ the strict transform of $\fol_0$ under $\pi_1$ and
$\ell=m_{\E_1}(\pi_1^*\fol_0)$.
\begin{itemize}
\item [(a)] If $\W_0$ is of type II then the singular set of $\fol_1$ 
contains 
$$ \chi(\W_0)+(k-1)\deg(\W_0) +\ell\bigg(\chi(\W_0)-4\deg(\W_0)\bigg)/2$$
homeomorphic curves to $\P^1$, counting the multiplicities.

\item [(b)] If $\W_0$ is of type III then the singular set of $\sing(\fol_1)$ contains
$$2+\ell=1+m_{\W_0}(\fol_0)$$
homeomorphic curves to $\W_0$, counting the multiplicities.
\end{itemize}
\end{theorem}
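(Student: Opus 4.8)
The plan is to read off the non-isolated singular set of $\fol_1$ on the exceptional divisor from the normal forms already obtained in Section 3, and then to count the resulting curves by intersection theory on the ruled surface $\E_1=\P(\nn_0)$ with projection $\rho_1\colon\E_1\to\W_0$, where $\nn_0=\nn_{\W_0/\M_0}$. Throughout I use the Chern data of \S2.2 specialized to $n=3$: from (\ref{forlm}) and (\ref{forEE}), $\deg\nn_0=\int_{\W_0}c_1(\nn_0)=\Lambda_0^{(3)}=4\deg(\W_0)-\chi(\W_0)$ and $\int_{\E_1}\zeta_1^{2}=(-1)^3\Lambda_0^{(3)}=\chi(\W_0)-4\deg(\W_0)$, together with $\zeta_1\cdot f=-1$ and $f^2=0$ for a fibre $f$ of $\rho_1$ (since $\zeta_1=c_1(\oo_{\nn_0}(-1))$ restricts to $\oo_{\P^1}(-1)$ on each fibre).

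For (b) I work with the normal form (\ref{examp3}). There $\sing(\fol_1)\cap\E_1$ is cut out in the chart $(U_1,\sigma_1(u))$ by $u_1=0$ and $H(u_2,u_3):=Q_{2,m_1}(u)-u_2Q_{1,m_1}(u)=0$. The term $-u_2Q_{1,m_1}$ makes $H$ of degree exactly $m_1+1$ in the fibre coordinate $u_2$, so over a generic point of $\W_0$ the equation $H=0$ has $m_1+1$ roots $u_2=\psi_2(u_3)$ counted with multiplicity; each root sweeps out a section of $\rho_1$, hence a curve homeomorphic to $\W_0$. Intrinsically this number is the intersection $C_{III}\cdot f=m_1+1$ of the singular curve with a fibre. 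Since type III means $m_1=m_n=m_{\W_0}(\fol_0)$ and $\ell=m_1-1$, this gives $m_1+1=2+\ell=1+m_{\W_0}(\fol_0)$, which is (b).

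For (a) I use (\ref{exemp2}). Restricting to $\E_1$ kills the first two components (they carry positive powers of $u_1$) and leaves $Q_{3,m_3}(u_2,u_3)\,\partial/\partial u_3$, so $\sing(\fol_1)\cap\E_1$ is the curve $C:=\{Q_{3,m_3}=0\}$. The key step is to read $C$ globally: the leading normal jet $P_{3,m_3}$ of the tangential component is a section of $\mathrm{Sym}^{m_3}(\nn_0^{*})\otimes L$ with $L=\tt_{\W_0}\otimes\oo(k-1)|_{\W_0}$, so on $\E_1=\P(\nn_0)$ it becomes a section of $\oo_{\nn_0}(m_3)\otimes\rho_1^{*}L$, whence
\[
[C]=-m_3\,\zeta_1+(\deg L)\,f,\qquad \deg L=\chi(\W_0)+(k-1)\deg(\W_0),\qquad m_3=\ell .
\]
In type II the leaves of $\fol_1|_{\E_1}$ are transverse to the ruling (the sections $u_2=\mathrm{const}$ of (\ref{ft2})); the curves homeomorphic to $\P^{n-2}=\P^1$ are swept out by the singular points of $C$ as the leaf of this transverse foliation varies over the $\P^1$ of leaves, so their number counted with multiplicity is the intersection of $[C]$ with a leaf. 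The leaf class is the section $\sigma$ with $\sigma\cdot f=1$ and $\sigma^2=0$, i.e. $\sigma=-\zeta_1+\tfrac12(\deg\nn_0)\,f$, and then
\[
C\cdot\sigma=\deg L-\tfrac{m_3}{2}\deg\nn_0=\chi(\W_0)+(k-1)\deg(\W_0)+\tfrac{\ell}{2}\big(\chi(\W_0)-4\deg(\W_0)\big),
\]
using $\zeta_1^2=\chi(\W_0)-4\deg(\W_0)$ and $\deg\nn_0=4\deg(\W_0)-\chi(\W_0)$. This is exactly (a).

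The delicate point, and the step I expect to be the main obstacle, is the geometric bookkeeping behind the intersection $C\cdot\sigma$ in part (a): one must prove that the singular points of $C$ genuinely organize into curves homeomorphic to $\P^1$, parametrized by the leaf space rather than by $\W_0$, and that their multiplicities are correctly measured by intersecting with the self-intersection-zero section class $\sigma$ rather than with a fibre. Equivalently, one has to control the twisting of the fibre coordinate $u_2$ as one moves around $\W_0$ (which for $\W_0$ of positive genus is exactly why a naive single section cannot be rational), and it is this twisting that produces the correction term $-\tfrac{\ell}{2}\deg\nn_0$; the identity $\int_{\E_1}\zeta_1^2=\chi(\W_0)-4\deg(\W_0)$ from \S2.2 is what converts the geometric count into the stated closed formula. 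In part (b) no such correction appears because the relevant curves are honest sections and the count is simply $C_{III}\cdot f$.
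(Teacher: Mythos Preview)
The paper does not prove this theorem in-house; it simply cites Theorem~4.7 of \cite{toulouse}. So there is no argument here to compare against, only your own.

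Your treatment of (b) is correct: the fibrewise equation $Q_{2,m_1}-u_2Q_{1,m_1}=0$ has degree $m_1+1$ in $u_2$, each root gives a section homeomorphic to $\W_0$, and in type III $m_1=m_{\W_0}(\fol_0)=\ell+1$, so the count is $2+\ell$.

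For (a), your class computation $[C]=-\ell\,\zeta_1+(\deg L)f$ with $\deg L=\chi(\W_0)+(k-1)\deg(\W_0)$ is right, and the evaluation $C\cdot\sigma=\deg L-\tfrac{\ell}{2}\deg\nn_0$ does reproduce the stated formula. The gap is exactly the one you flag: you have not shown that $C\cdot\sigma$ is the number of $\P^1$-curves. In fact $C\cdot\sigma$ depends only on the numerical class $[C]$, so it cannot by itself distinguish the vertical (fibre) part $C_v=Nf$ from the horizontal part $C_h$; one always gets $C\cdot\sigma=\deg L-\tfrac{\ell}{2}\deg\nn_0$ regardless of how $C$ actually decomposes. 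Your justification that ``the leaf class is the section $\sigma$ with $\sigma^2=0$'' is precisely what needs proof: the local picture $u_2=\mathrm{const}$ says only that $\fol_1|_{\E_1}$ is a Riccati foliation transverse to the ruling, not that its leaves are closed algebraic sections in a single numerical class, which would require controlling the monodromy of this Riccati foliation around $\W_0$. What is missing is an argument that the horizontal components of $C$ (which are invariant, hence leaves of the saturated Riccati foliation) all lie in the numerical class $\sigma$---equivalently that $[C_h]=\ell\sigma$---so that $C_h\cdot\sigma=0$ and $C\cdot\sigma=N$. This is the substantive content of the result in \cite{toulouse}, and it cannot be read off from the local chart alone.
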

\begin{proof}
The proof of Theorem \ref{theoremt} is identical to the proof of Theorem 4.7 in \cite{toulouse}. 
\end{proof}
\begin{remark} \rm If $\W_0$ is of type II, then there will necessarily be at least one homeomorphic curve to $\mathbb{P}^1$. Indeed, since $\W_0$ is a smooth curve, it is also a local complete intersection (l.c.i.), which implies that $\W_0$ can be locally defined by two polynomials $f_1=f_2=0$, where $d_j=\deg(f_j)$ with $d_1\le d_2$.
For the sake of contradiction, assume that $\W_0$ is of type II and there is no homeomorphic curve to $\mathbb{P}^1$ contained in $\text{Sing}(\mathcal{F}_1)$. Therefore, $\mathcal{F}_1$ can extend to a foliation on $\E_1$ without singularities on it, and its leaves are homeomorphic to $\W_0$. According to \cite{GM}, Proposition 2.3, and \cite{toulouse}, Theorem 4.6, we have $\chi(\W_0)=0$, which leads to $k=\deg(\mathcal{F}_0)=2\ell +1$, where $\ell=m_{\E_1}(\pi_1^*\mathcal{F}_0)$. However, this is impossible if $d_1\ge2$. In fact, if $d_1\ge 2$, then according to \cite{toulouse}, page 315, $\mathcal{F}_0$ is described by the following vector field: 
\begin{align*}
 X_0&= \bigg( \frac{\partial f_2}{\partial z_2}Q_1(z)-\frac{\partial f_1}{\partial z_2}Q_2(z)+\Delta_{23}Q_3\bigg)\frac{\partial}{\partial z_1}+\bigg( -\frac{\partial f_2}{\partial z_1}Q_1(z)+\frac{\partial f_1}{\partial z_1}Q_2(z)-\cr &-\Delta_{13}Q_3\bigg)\frac{\partial}{\partial z_2} +\Delta_{12}Q_3(z)\frac{\partial}{\partial z_3}
\end{align*}
where
$$ \Delta_{ij}=  \frac{\partial f_1}{\partial z_i} \frac{\partial f_2}{\partial z_j}- \frac{\partial f_1}{\partial z_j} \frac{\partial f_2}{\partial z_i},\quad Q_i(z) = \sum_{j=0}^{m_i}a_{ij}(z)\big(f_1(z)\big)^{m_i-j}\big(f_2(z)\big)^j.$$
Since $\W_0$ is of type II then $m_3+2 \le m_1=m_2$. Therefore,  
$$ deg\bigg(\frac{\partial f_2}{\partial z_i}Q_1(z)\bigg) \ge d_2 -1 + (\ell +2)d_1 + j(d_2-d_1)$$
for $j=0,\ldots,m_1$ since $a_{1j}(z)\not\equiv0$. Given that at least one $a_{1j}\not\equiv0$, we can conclude that
$$ deg(\fol_0) \ge 2\ell + 5.$$

On the other hand, if $d_1=1$ then $\W_0$ is a complete intersection and since $\fol_1$ extends to $\E_1$ without singularities we can consider $\fol_0$ to be special along $\W_0$. In order to have $\deg(\fol_0)=2\ell +1$, the unique possibility is $d_2\le 2$. See \cite{toulouse} for more details. But it implies $\chi(\W_0)=2$, which is absurd.
\end{remark}

\begin{example}\label{examp12}\rm 
Let $\mathcal{F}_0$ be a one-dimensional holomorphic foliation in $\M_0=\mathbb{P}^3$, such that $\W_0\subset \text{Sing}(\mathcal{F}_0)$, where $\W_0$ is a smooth curve.

In this example, we will assume that the desingularization process for $\W_0$ does not involve any dicritical curves. Thus, in some affine set, $\mathcal{F}_0$ can be described by a vector field $X_0$ as in (\ref{for114}). Let $\pi_1:\M_1\to\M_0$ be the blow-up of $\M_0$ along $\W_0$.

 If $\W_0$ is of type III, then there is at least one curve $\W_1\subset\text{Sing}(\mathcal{F}_1)$ such that $\W_1$ is homeomorphic to $\W_0$ with $\pi_1(\W_1)=\W_0$. See Theorem \ref{theoremt}. Thus, $\W_1$ can be locally defined as $u_1=u_2-\psi_1(u_3)=0$, which results in $\mathcal{F}_1$ is described by a vector field $X_1$ as in Equation (\ref{for124}). Let $\pi_2:\M_2\rightarrow\M_1$ be the blowup of $\M_1$ along $\W_1$, which is, by hypothesis, a non-dicritical curve of singularities. Given that the first and third sections of $X_1$ have $v_1$ as a factor, it is necessary for the curve $\W_2$, defined as $y_1=y_2=0$, where $\sigma_2(y)=v$, to be contained in $\text{Sing}(\mathcal{F}_2)$. Continuing in this manner, there exists a blow-up sequence $\{\pi_i,\M_i,\W_i,\fol_i,\E_i\}$ such that $\W_i$ is homeomorphic to $\W_{i-1}$ and $\pi_i(\W_i)=\W_{i-1}$ for all $i\ge1$.

If $\W_0$ is of type II, then $\mathcal{F}_1$ is described by a vector field $X_1$ as in Equation (\ref{exemp2}). In this situation, there is at least one curve $\W_1\subset\text{Sing}(\mathcal{F}_1)$ that is homeomorphic to $\mathbb{P}^1$, which can be locally defined as $u_1=u_3 - \psi_1(u_2)=0$. Consequently, from (\ref{exemp2}) the third section of $X_1$ can be written as follows:
$$Q_3(u)=\dps\sum_{j=m_3}^{\infty}v_1^{j-m_3}v_2^{n_{3,j}}h_{3,j}(v),\quad (v_1,v_2,v_3)=(u_1,u_3-\psi_1(u_2),u_2).$$
Hence, in this coordinate system $(v_1,v_2,v_3)$, the first and third sections of the vector field $X_1$ also have $v_1$ as a factor. Similarly to the previous case, there exists a blow-up sequence $\{\pi_i,\M_i,\W_i,\mathcal{F}_i,\E_i\}$ where $\W_i$ is homeomorphic to $\mathbb{P}^1$ and $\pi_i(\W_i)=\W_{i-1}$, but this time for all $i\ge2$.
\end{example}

\begin{theorem}\label{theoremd}

Let $\mathcal{F}_0$ be a one-dimensional holomorphic foliation defined on $\M_0=\mathbb{P}^n$ of degree $k$, and its singular locus contains a smooth curve $\W_0$ of degree $\deg(\W_0)$ and the Euler characteristic $\chi(W_0)$.

We will assume the existence of a finite blow-up sequence $\{\pi_i,\M_i,\W_i,\mathcal{F}_i,\E_i\}_{j=1}^{j-1}$ such that $\W_{i}\subset\E_i$ is homeomorphic to $\W_{i-1}$ with $\pi_i(\W_i)=\W_{i-1}$ and $\ell_i=m_{\E_i}(\pi_i^*\mathcal{F}_{i-1})$ for $i=1,\ldots,j$.

(a) If $\fol_{j-1}$ is special along $\W_{j-1}$ then 
$$\begin{array}{ll}\nn(\fol_j,\E_j)&=\dps\chi(\W_0)\sum_{i=0}^{n-2}(\ell_j+1)^i-\ell_j(1+\ell_j)^{n-2}\frac{\Lambda_0^{(n)}}{(n-1)^{j-1}}\cr
  &+(n-1)(\ell_j+1)^{n-2}\bigg((k-1)\deg(\W_0)-\Lambda_0^{(n)}\dps\sum_{i=1}^{j-1}\frac{\ell_i}{(n-1)^i}\bigg).
  \end{array}$$

(b) If $\sing(\fol_j)$ contains only isolated closed points on $\M_j$, then

    $$ \nn(\fol_j,\M_j)=\sum_{i=0}^{n}k^i + \sum_{m=1}^{j}\eta_m(\ell_1,\ldots,\ell_m)$$
    
    where 
    \begin{align}
      \eta_m&=\chi(\W_0)\bigg(\sum_{i=0}^{n-2}(1 +\ell_m)^i-(1+\ell_m)^{n-1}\bigg)+(1+\ell_m)^{n-2}\bigg((\ell_m^2-\ell_m)\frac{\Delta_0^{(n)}}{(n-1)^{m-1}}\bigg)\cr
            &-(1+\ell_m)^{n-2}(n\ell_m-n+2)\bigg((k-1)\deg(\W_0)\bigg)-\Delta_0^{(n)}\sum_{i=1}^{m-1}\frac{\ell_i}{(n-1)^i} \bigg)
    \end{align}

Here we are also assuming that $\dps\sum_{i=\alpha}^{\beta}a_i =0$ if $\alpha > \beta$.
\end{theorem}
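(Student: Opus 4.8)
The plan is to reduce both items to the one-step formulas already at our disposal — Theorem~\ref{theo2.8} and the variation formula of Theorem~\ref{theorem1}(c) — applied at each stage $\pi_m:\M_m\to\M_{m-1}$ of the tower, and then to evaluate the resulting characteristic-number integrals with the recursive Chern-class identities of the ``Chern classes'' subsection. For item (a), the first step is to upgrade Theorem~\ref{theo2.8}(a) from $\pn$ to the blown-up ambient manifold $\M_{j-1}$. The argument of \cite{AG} proving $\nn(\fol_1,\E_1)=-\nu(\fol_0,\W_0,\psi_a)$ is local around $\W_0$ together with a Baum--Bott/Bott localization, so it applies once $\fol_{j-1}$ is special along $\W_{j-1}$; the only change is that the Chern classes $\sigma_{a_1}$, $\tau_{a_2}$ and the symmetric functions entering $\nu$ become those of $\nn_{j-1}$ and $\tt_{\W_{j-1}}$ in $\M_{j-1}$, with $\ell=\ell_j$. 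This yields $\nn(\fol_j,\E_j)=-\nu(\fol_{j-1},\W_{j-1},\psi_a)$, where $\psi_a(x)=\big((1+x)^{n-1-a_1}-1\big)x^{-a_2}$ since $d=n-1$.

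Next I would collapse this sum using that $\W_{j-1}$ is a curve: because $n-d=1$, only $|a|\in\{0,1\}$ occur, and integration over the one-dimensional $\W_{j-1}$ annihilates every Chern monomial of degree $\ge 2$. Evaluating $\psi_a$ and its derivatives at $x=\ell_j$ and substituting the surviving integrals via (\ref{for22}), (\ref{forEE}), (\ref{relcc}) and (\ref{equpor}) — namely $\int_{\E_j}\zeta_j^{n-1}=(-1)^n\Lambda_0^{(n)}/(n-1)^{j-1}$ and $\int_{\W_{j-1}}\E_{j-1}=\Lambda_0^{(n)}/(n-1)^{j-1}$ — together with $\chi(\W_{j-1})=\chi(\W_0)$ and the effective foliation degree along $\W_{j-1}$, which by the same recursion equals $(k-1)\deg(\W_0)-\Lambda_0^{(n)}\sum_{i=1}^{j-1}\ell_i/(n-1)^i$, produces exactly the closed form asserted in (a).

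For item (b), since $\pi_m$ restricts to a biholomorphism $\M_m\setminus\E_m\to\M_{m-1}\setminus\W_{m-1}$, the generic special deformation of $\fol_{m-1}$ supplied by Lemma~\ref{lema4} lets me run the counting argument of Theorem~\ref{theorem1} one level at a time, giving the recursion
\begin{equation*}
\nn(\fol_m,\M_m)=\nn(\fol_{m-1},\M_{m-1})+\nu(\fol_{m-1},\W_{m-1},\varphi_a-\psi_a),
\end{equation*}
in which the embedded-point contributions cancel because $N(\fol_m,\aa_{\W_m})=N(\fol_{m-1},\aa_{\W_{m-1}})$ under the biholomorphism. Writing $\eta_m:=\nu(\fol_{m-1},\W_{m-1},\vartheta_a)$ with $\vartheta_a=\varphi_a-\psi_a$ as in Theorem~\ref{theorem1}(c), and telescoping from the base count $\nn(\fol_0,\pn)=\sum_{i=0}^{n}k^i$, gives $\nn(\fol_j,\M_j)=\sum_{i=0}^{n}k^i+\sum_{m=1}^{j}\eta_m$. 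The explicit shape of $\eta_m$ then follows from the identical curve-case evaluation of $\nu(\cdot,\vartheta_a)$ carried out for (a).

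The delicate point is not the localization but the propagation of the Chern numbers of $\nn_{j-1}$ and $\tt_{\M_{j-1}}|_{\W_{j-1}}$ through the whole tower, i.e. justifying the induction $\Lambda_j^{(n)}=\Lambda_0^{(n)}/(n-1)^{j-1}$ and $\int_{\W_j}\E_j=\Lambda_0^{(n)}/(n-1)^{j}$ of (\ref{for22})--(\ref{for26}). This is precisely where the hypotheses $\W_j\cong\W_{j-1}$ and $\pi_j(\W_j)=\W_{j-1}$ are essential: they force $\chi$ to be preserved and make the self-intersection $\int_{\W_j}\E_j$ reduce by the factor $\chi(\P^{n-2})=n-1$ at each step, which is what produces the denominators $(n-1)^{i}$ and the accumulated term $\sum_{i=1}^{m-1}\ell_i/(n-1)^i$. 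Keeping this bookkeeping consistent with the combinatorial collapse of the $\nu$-sums — the binomial identity $\sum_{j}\binom{n-j}{m}\binom{d-a_1}{j-|a|}\ell^{n-j-m}=\varphi_a^{(m)}(\ell)/m!$ and its $\psi_a$ analogue used in the proof of Theorem~\ref{theo2.8} — is the real work.
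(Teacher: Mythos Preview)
Your strategy differs from the paper's. The paper does \emph{not} lift Theorem~\ref{theo2.8} or Theorem~\ref{theorem1}(c) to the intermediate manifolds $\M_{m-1}$; instead it computes $\nn(\fol_j,\E_j)$ and $\nn(\fol_j,\M_j)$ directly as Baum--Bott integrals at level~$j$, expanding $c_i(\tt_{\E_j})$ via (\ref{for52}), $c_i(\tt_{\M_j})$ via Porteous (\ref{ccab}), and $c_1(\tt_{\fol_j}^*)$ via the recursion (\ref{crft}), then evaluating with the inductive identities (\ref{for22}), (\ref{for50}), (\ref{for51}). Your route is viable in principle but hides a real step you do not carry out: the kernel $\nu$ is \emph{defined} through the complete-intersection degrees $(k_1,\ldots,k_d)$ and the hyperplane class of $\pn$, so invoking Theorem~\ref{theo2.8} on $\M_{j-1}$ first requires recasting $\nu$ intrinsically --- the $\mathcal{W}_\delta^{(d)}$ become Segre classes of $\nn_{j-1}$, the factor $(k-1)$ becomes $c_1(\tt_{\fol_{j-1}}^*)|_{\W_{j-1}}$, and $\deg(\W_0)$ is absorbed into integration over $\W_{j-1}$. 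Carrying that reformulation through is essentially the same direct Baum--Bott expansion the paper performs, so the detour through an abstract $\nu$ buys no economy.

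For (b) you introduce deformation and embedded-point cancellation unnecessarily, and this is a genuine misstep. The recursion $\nn(\fol_m,\M_m)=\nn(\fol_{m-1},\M_{m-1})+\eta_m$ is a pure Chern-number identity (both sides are $\int c_n$ of the twisted tangent bundle), valid regardless of whether the intermediate $\fol_{m-1}$ have isolated singularities; no $N(\fol,\aa)$ terms enter at all. The paper derives it directly from Porteous and (\ref{crft}) and then iterates. Lemma~\ref{lema4} is used only later, in the proof of Theorem~\ref{theoremB}, to pass from characteristic numbers back to Milnor numbers. Your claimed equality $N(\fol_m,\aa_{\W_m})=N(\fol_{m-1},\aa_{\W_{m-1}})$ is also not what the paper establishes --- only that the sequence is non-increasing --- so even if deformation were the right tool here, that cancellation would need its own argument.
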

 \begin{proof} For $j=1$, both formulas (a) and (b) align with Theorems 3.1 and 3.3 in \cite{toulouse}, respectively.  From Porteuos theorem ( see \cite{GH}, page 609) , we obtain 
\begin{equation}\label{ccab}
         \begin{array}{l}
          c_i(\tt_{\M_{j}}) = \pi_j^*c_i(\tt_{\M_{j-1}})+\dps\sum_{l=0}^{i-1}(-1)^{i-l-1}\bigg[\binom{n-1-l}{i-1-1}-\binom{n-1-l}{i-l}\bigg]\pi_j^*c_{l}(N_{j-1})\cdot\E_j^{i-l}\\
          +\dps\sum_{l=0}^{i-2}(-1)^{i-j}\bigg[\binom{n-1-l}{i-2-l}-\binom{n-1-l}{i-1-l}\bigg]\pi_j^*c_{l}(N_{j-1})\cdot\pi_j^*c_1(\tt_{\W_{j-1}})\cdot\E_j^{i-1-l}.
          \end{array}
\end{equation}
From \cite{bb}, it follows that  
$$ \tt_{\fol_{i}} \cong \pi_i^*\big(\tt_{\fol_{i-1}}\big) \otimes [\E_i]^{\ell_i}$$
which make us conclude 
\begin{equation}\label{crft}
c_1(\tt_{\fol_i}^*)=\pi_j^*c_1(\tt_{\fol_{i-1}}^*)-\ell_i\cdot\E_i,\quad\mathrm{ for}\quad i=1,\ldots,j.
\end{equation}
From (\ref{for22}) and (\ref{ccab}), we can show by finite induction that
\begin{align}\label{for50}
 \int_{\W_j}c_1(\tt_{\M_j})&=\int_{\W_0}c_1(\tt_{\M_0})-\dps\sum_{i=1}^{j}\int_{\W_i}(n-2)\E_i=(n+1)\deg(\W_0) -\bigg(1-\frac{1}{(n-1)^j}\bigg)\Lambda_0^{(n)}\cr
 &=(n+1)\deg(\W_0)-\Lambda_0^{(n)}+\frac{\Lambda_0^{(n)}}{(n-1)^j}\cr
\int_{\W_j}c_1(\tt_{\M_j}) &= \chi(\W_0) +\frac{\Lambda_0^{(n)}}{(n-1)^j},\quad j\ge0.
\end{align}
Similarly, we obtain
\begin{equation}\label{for51}
 \int_{\W_j}c_1(\tt_{\fol_j}^*)=(k-1)\deg(\W_0) -\Lambda_0^{(n)}\dps\sum_{i=1}^{j}\frac{\ell_i}{(n-1)^i},\quad j\ge0.
\end{equation}
Using the Baum-Bott's formula, we get that
$$ \nn(\fol_j,\E_j)=\int_{\E_j}c_{n-1}(\tt_{\E_j}\otimes \tt_{\fol_j}^*)=\int_{\E_j}\sum_{i=0}^{n-1}c_i(\tt_{\E_j})c_1^{n-1-i}(\tt_{\fol_j}^*).$$ 

From \cite{MAGR}, we get 
\begin{equation}\label{for52}
\begin{array}{ll}
     c_i(\tt_{\E_j})&=\dps\sum_{l=0}^{i-1}(-1)^{l}\pi_j^*c_{i-l}(\tt_{M_{j-1}})\zeta_j^{l} +(-1)^{i}\binom{n-1}{i}\zeta_j^i\cr
     &+\dps\sum_{l=1}^{i-1}\bigg( 1 - \binom{n-l-1}{i-l}\bigg)\pi_j^*c_l(N_{j-1})\zeta_j^{i-l}\cr
     &+\dps\sum_{l=0}^{i-2}\bigg( 1 - \binom{n-l-1}{i-l-1}\bigg)\pi_j^*c_l(N_{j-1})\pi_j^*c_1(\tt_{\W_{j-1}})\zeta_j^{i-l-1}     
\end{array}
\end{equation}
From (\ref{crft}), we have that 
\begin{equation}\label{for52*}
   c_1(\tt_{\fol_j}^*)^{n-1-i}=\sum_{m=0}^{n-1-i}(-1)^{n-1-i-m}\binom{n-1-i}{m}\pi_j^*c_1^m(\tt_{\fol_{j-1}}^*)\ell_j^{n-1-i-m}\E_j^{n-1-i-m}.
\end{equation}
Since that 
\begin{align*}
\int_{\E_j}\pi_j^*c_{i-l}(\M_{j-1})\pi_j^*c_1^m(\tt_{\fol_{j-1}}^*)\zeta_j^{n-i+l-m-1}&=0,\quad l\le i-2\quad\mathrm{or}\quad m\ge1 \cr
\int_{\E_j}\pi_j^*c_1^m(\tt_{\fol_{j-1}}^*)\zeta_j^{n-1-m}&=0\quad m\ge2\cr
\int_{\E_j}\pi_j^**c_l(N_{j-1})\pi_j^*c_1^m(\tt_{\fol_{j-1}}^*)\zeta_j^{n-1-l-m}&=0,\quad l+m\ge2\cr
\int_{\E_j}\pi_j^*c_l(N_{j-1})\pi_j^*c_1(\tt_{\W_{j-1}})\pi_j^*c_1^m(\tt_{\fol_{j-1}}^*)\zeta_j^{n-l-m-2}&=0,\quad l+m\ge1
\end{align*}
we get that 
\begin{align*}
 \nn(\fol_j,\E_j) &=\dps\sum_{i=0}^{n-1}\int_{\E_j}c_i(\tt_{\E_j}) c_1^{n-1-i}(\tt_{\fol_j}^*)= \cr
 &= \dps(-1)^{n-1}\sum_{i=0}^{n-1}\ell_j^{n-1-i}\int_{\E_j}\zeta_j^{n-1}+\sum_{i=1}^{n-1}(-1)^n\ell_j^{n-1-i}\int_{\E_j}\pi_j^*c_{1}(\tt_{M_{j-1}})\zeta_j^{n-2}\cr
 &+\dps\sum_{i=0}^{n-1}(-1)^n\binom{n-1}{i}\binom{n-1-i}{1}\ell_j^{n-i-2}\int_{\E_j}\pi_j^*c_1(\tt_{\fol_{j-1}}^*)\zeta_j^{n-2}\cr
 &+\dps\sum_{i=1}^{n-1}(-1)^{n-1}\bigg(1-\binom{n-2}{i-1}\bigg)\ell_j^{n-1-i}\int_{\E_j}\pi_j^*c_1(\nn_{j-1})\zeta_j^{n-2}\cr
& +\dps\sum_{i=2}^{n-1}(-1)^{n-1}\bigg(1-\binom{n-2}{i-1}\bigg)\ell_j^{n-1-i}\int_{\E_j}\pi_j^*c_1(\tt_{W_{j-1}})\zeta_j^{n-2}.\cr
\end{align*}
Since $c_1(\tt_{W_{j-1}})+c_1(N_{j-1})|_{\W_{j-1}}=c_{1}(\tt_{M_{j-1}}|_{\W_{j-1}}$, we have that

\begin{align*}
 \nn(\fol_j,\E_j) &= -\dps\sum_{i=0}^{n-1}\ell_j^{n-1-i}\int_{\W_{j-1}}c_1(\nn_{j-1})\cr
 &+\dps\sum_{i=0}^{n-1}\binom{n-1}{i}\binom{n-1-i}{1}\ell_j^{n-i-2}\int_{\W_{j-1}}c_1(\tt_{\fol_{j-1}})\cr
 &+\dps\sum_{i=1}^{n-1}\binom{n-2}{i-1}\ell_j^{n-1-i}\int_{\W_{j-1}}c_1(\nn_{j-1})\cr
& +\dps\sum_{i=1}^{n-1}\binom{n-2}{i-1}\ell_j^{n-1-i}\int_{\W_{j-1}}c_1(\tt_{W_{j-1}}).
\end{align*}
Therefore, 
\begin{align*}
 \nn(\fol_j,\E_j) &= \chi(\W_0)\sum_{i=0}^{n-2}(\ell_j+1)^i + (n-1)(\ell_j+1)^{n-2}(k-1)\deg(\W_0)\cr 
 &-(\ell_j+1)^{n-2}\Delta_0^{(n)}\bigg(\ell_j - (n-1)\dps\sum_{i=1}^{j-1}\frac{\ell_i}{(n-1)^i}\bigg).
\end{align*}
Thus, we obtain statement (a) of the Theorem.
Now, again by Baum-Bott's formula, 
$$\nn(\fol_j, \M_j) = \int_{\M_j}c_n(\tt_{\M_j}\otimes \tt_{\fol_j}^*)=\sum_{i=0}^{n}\int_{\M_j}c_i(\tt_{\M_j})c_1^{n-i}(\tt_{\fol_j}^*).$$
where
\begin{equation}\label{for179}
  c_1(\tt_{\fol_j}^*)^{n-i} = \sum_{m=0}^{n-i} (-1)^{n-i-m}\binom{n-i}{m}\pi_j^*c_1^{m}(\tt_{\fol_{j-1}}^*)\ell_j^{n-i-m}\E_j^{n-i-m}.
\end{equation}

Since $\dps\int_{\M_j}\E_j^n = \int_{\E_j}\zeta_j^{n-1}$ and 
\begin{align*}
  \int_{\M_j}\pi_j^*c_i(\tt_{\M_{j-1}})\pi_j^*c_1^{m}(\tt_{\fol_{j-1}}^*)\E_j^{n-i-m} &=0,\quad 2\le i+m < n\cr 
  \int_{\M_j}\pi_j^*c_l(\nn_{j-1})\pi_j^*c_1(\tt_{\W_{j-1}})\pi_j^*c_1^{m}(\tt_{\fol_{j-1}}^*)\E_j^{n-l-m-1} &=0,\quad l+m\ge1\cr
  \int_{\M_j}\pi_j^*c_l(\nn_{j-1})\pi_j^*c_1^{m}(\tt_{\fol_{j-1}}^*)\E_j^{n-l-m} &=0,\quad l+m\ge2
\end{align*}
we obtain from (\ref{ccab}) and (\ref{for179}) that 
\begin{align}\label{for55}
\nn(\fol_j, \M_j) &=\sum_{i=0}^{n}\int_{\M_{j}}\pi_j^*c_i(\tt_{\M_{j-1}})\pi_j^*c_1^{n-i}(\tt_{\fol_{j-1}}^*) +(-1)^{n-1}\int_{\M_j}\pi_j^*c_1(\tt_{\M_{j-1}})\ell_j^{n-1}\E_{j}^{n-1}\cr
                &+\dps\sum_{i=0}^{n}(-1)^{n-1}\bigg[\binom{n-1}{i-1}-\binom{n-1}{i}\bigg]\ell_j^{n-i}\int_{\M_j}\E_j^{n}\cr 
                &+\dps\sum_{i=0}^{n}(-1)^{n}\bigg[\binom{n-1}{i-1}-\binom{n-1}{i}\bigg]\binom{n-i}{1}\ell_j^{n-1-i}\int_{\M_j}\pi_j^*c_1(\tt_{\fol_{j-1}}^*)\cdot \E_j^{n-1}\cr
                &+\dps\sum_{i=2}^{n}(-1)^{n}\bigg[\binom{n-2}{i-2}-\binom{n-2}{i-1}\bigg]\ell_j^{n-i}\int_{\M_j}\pi_j^*c_1(\nn_{j-1})\cdot\E_j^{n-1}\cr
                &+\dps\sum_{i=2}^{n}(-1)^{n}\bigg[\binom{n-1}{i-2}-\binom{n-1}{i-1}\bigg]\ell_j^{n-i}\int_{\M_j}\pi_j^*c_1(\tt_{\W_{j-1}})\cdot\E_j^{n-1}                
\end{align}
Now, given that $\dps\int_{\M_{j}}\pi_j^*c_i(\tt_{\M_{j-1}})\pi_j^*c_1^{n-i}(\tt_{\fol_{j-1}}^*)=\int_{\M_{j-1}}c_i(\tt_{\M_{j-1}})c_1^{n-i}(\tt_{\fol_{j-1}}^*)$, so we get that  
\begin{align}\label{for56}
\nn(\fol_j, \M_j) &=\nn(\fol_{j-1},\M_{j-1})-\ell_j^{n-1}\int_{\W_{j-1}}c_1(\tt_{\M_{j-1}})\cr
                &-\dps\sum_{i=0}^{n}\bigg[\binom{n-1}{i-1}-\binom{n-1}{i}\bigg]\ell_j^{n-i}\int_{\W_{j-1}}c_1(\nn_{j-1})\cr 
                &+\dps\sum_{i=0}^{n}\bigg[\binom{n-1}{i-1}-\binom{n-1}{i}\bigg]\binom{n-i}{1}\ell_j^{n-1-i}\int_{\W_{j-1}}c_1(\tt_{\fol_{j-1}}^*)\cr
                &+\dps\sum_{i=2}^{n}\bigg[\binom{n-2}{i-2}-\binom{n-2}{i-1}\bigg]\ell_j^{n-i}\int_{\W_{j-1}}c_1(\nn_{j-1})\cr
                &+\dps\sum_{i=2}^{n}\bigg[\binom{n-1}{i-2}-\binom{n-1}{i-1}\bigg]\ell_j^{n-i}\int_{\W_{j-1}}*c_1(\tt_{\W_{j-1}})   
\end{align}
Thus, 
\begin{align}\label{for57}
\nn(\fol_j, \M_j) &=\nn(\fol_{j-1},\M_{j-1})-\ell_j^{n-1}\int_{\W_{j-1}}c_1(\tt_{\M_{j-1}})\cr
                &-(1+\ell_j)^{n-1}(1-\ell_j)\int_{\W_{j-1}}c_1(\nn_{j-1})\cr 
                &-(1+\ell_j)^{n-2}(n\ell_j-n+2)\int_{\W_{j-1}}c_1(\tt_{\fol_{j-1}}^*)\cr
                &+\big((1+\ell_j)^{n-2}(1-\ell_j)+\ell_j^{n-1}\big)\int_{\W_{j-1}}c_1(\nn_{j-1})\cr
                &+\bigg(\sum_{i=0}^{n-2}(1+\ell_j)^i - (1+\ell_j)^{n-1}+\ell_j^{n-1}\bigg)\int_{\W_{j-1}}c_1(\tt_{\W_{j-1}})   
\end{align}
Finally, from (\ref{for22}), (\ref{for50}) and (\ref{for51}), we conclude 
\begin{align}\label{for58}
\nn(\fol_j, \M_j) &=\nn(\fol_{j-1},\M_{j-1})+\chi(\W_0)\bigg(\sum_{i=0}^{n-2}(1+\ell_j)^i-(1+\ell_j)^{n-1}\bigg)\cr
                &-(1+\ell_j)^{n-2}(n\ell_j-n+2)\bigg((k-1)\deg(\W_0)-\Delta_0^{(n)}\sum_{i=1}^{j-1}\frac{\ell_i}{(n-1)^i}\bigg)\cr
                &+(1+\ell_j)^{n-2}\bigg((\ell_j^2-\ell_j)\frac{\Delta_0^{(n)}}{(n-1)^{j-1}}\bigg)
\end{align}
 As $\nn(\fol_0, \M_0)=\dps\sum_{i=0}^{n}k^i$ we conclude that
 
 $$\nn(\fol_j, \M_j)= \sum_{i=0}^{n}k^i + \sum_{m=1}^{j}\eta_i(\ell_1,\ldots,\ell_m).$$ 
\end{proof}
\begin{subsection}{ Proof of Theorem \ref{theoremB}}
From Lemma (\ref{lema4}), for each $j$, there exists a special holomorphic deformation $\mathcal{F}_{jt}$ of $\mathcal{F}_j$ for $0 < |t| <\epsilon_j$, where $\epsilon_j$ is sufficiently small. Thus, for $t\ne0$, the deformation $\mathcal{F}_{jt}$ is special along $\W_j$, and $m_{\E_j}(\mathcal{F}_{jt}) = m_{\E_j}(\mathcal{F}_j)=\ell_{j+1}$ or $\W_j$ is an invariant set of $\mathcal{F}_{jt}$. Furthermore, we can assume $\deg(\pi_j^*\cdots\pi_1^*\mathcal{F}_{jt})=\deg(\mathcal{F}_0)$. To achieve this, it suffices to consider that $\W_j$ is defined as $$u_1=u_2-\psi_2^{(j)}(u_n)=\ldots=u_{n-1}-\psi_{n-1}^{(j)}(u_n)=0,$$ and $\mathcal{F}_j$ is described by the following vector field:
\begin{equation}\label{for72}
          X_j = P^{(j)}_1(v)\frac{\partial}{\partial v_1}+ P^{(j)}_2(v)\frac{\partial}{\partial v_2}+\ldots+P^{(j)}_n(v)\frac{\partial}{\partial v_n}
\end{equation}
where $$v=(v_1,v_2,\ldots, v_n)=F_j(u)=(u_1, u_2-\psi_2^{(j)}(u_n)=\ldots=u_{n-1}-\psi_{n-1}^{(j)}(u_n),u_n).$$ Therefore, in this coordinate system,  the foliation $\fol_{jt}$ is described by the following vector field
\begin{equation}\label{for73}
          X_{jt}= X_j + t Y_j
\end{equation}
with the vector field $Y_j$ given as in Lemma (\ref{lema4}),  that is 
\begin{equation}\label{for74}
          Y_j=\sum_{i=1}^{n}Y_i\frac{\partial}{\partial v_i},\quad Y_i=\sum_{|a|=q_i}v_1^{a_1}\cdots v_{n-1}^{a_{n-1}}a_{i,a}(v), \quad q_i = m_{\W_j}(Y_i)
\end{equation}
where   
$$ q_1 =q_2=\ldots = q_{n-1}=q_n+1 = \ell_{j+1} +1.$$
Therefore, if $\ell_{j+1}=0$, i.e.; if $\W_j$ is of type III and $\mbox{m}_{\W_j}(\fol_j)=1$ then $\W_j$ is  an invariant set of $\mathcal{F}_{jt}$ for $0 < |t| <\epsilon_j$, since $q_n=0$. By Hartog's theorem,  the foliation $\fol_{jt}$ that  is generated by the vector $ F_j^*X_{jt}$ can be extend for whole $\M_j$. Furthermore, the coefficients $a_{ir}$ are chosen in order to have $\deg(\pi_j^*\cdot\cdot\cdot\pi_1^*\fol_{jt})=\deg(\fol_{0}), \forall t$. Varying the coefficients $a_{ir}$, if necessary, we can admit that $\sing(\fol_{jt})$ is composed of a curve $\W_j$ and some more isolated closed points $p_{s}^{(j)}$. Let $\widetilde{\fol_{jt}}$ be the strict transform of $\fol_{jt}$ under $\pi_{j+1}$. Therefore, we can determine the Milnor number $\mu(\fol_j,\W_j)$ as follows. 

If $\ell_{j+1}=0$ then
\begin{align}\label{for971}
\mu(\fol_j,\W_j) &=\int_{\W_j}c_1(\tt_{\W_j}\otimes \tt_{\fol_{jt}}^*)+N(\fol_j,\aa_{\W_j})\cr
                 &=\int_{\W_j}\bigg( c_1(\tt_{\W_j})+c_1(\tt_{\fol_{jt}}^*\bigg)+N(\fol_j,\aa_{\W_j})\cr
                 &=\chi(\W_0) + (k-1)\deg(\W_0)-\Delta_0^{(n)}\sum_{i=1}^{j}\frac{\ell_i}{(n-1)^i}+N(\fol_j,\aa_{\W_j}).
\end{align} 
Otherwise, since 
$$ \sing(\fol_{jt})=\W_j\cup \{ p_{s}^{(j)},s=1,\ldots,s_t\} ,  \sing(\widetilde{\fol_{jt}})=\{\widetilde{ p}_{r}^{(j)},r=1,\ldots,r_t\} $$
we get  
\begin{align}\label{91}
\mu(\fol_j,\W_j) &= \lim_{t\to0}\big( \mu(\fol_{jt},\W_j)\big) = \lim_{t\to0}\bigg(\nn(\fol_{jt},\M_j)-\sum_{p_{s}^{(j)}\notin A_{\W_j}}\mu(\fol_{jt},p_{s}^{(j)})\bigg)\cr
                           &=\nn(\fol_{j},\M_j)-\lim_{t\to0}\sum_{p_{s}^{(j)}\notin A_{\W_j}}\mu(\fol_{jt},p_{s}^{(j)})
\end{align}

On the other hand, since $\pi_{j+1}|_{\M_{j+1}\setminus\E_{j+1}}:=\pi_{j+1}:\M_{j+1}\setminus\E_{j+1}\to \M_j\setminus\W_j$ is an isomorphism, we get that
\begin{align}\label{for75}
\lim_{t\to0} \sum_{p_{s}^{(j)}\notin A_{\W_j}}\mu(\fol_{jt},p_{s}^{(j)})&=\lim_{t\to0}\sum_{\widetilde{p}_{r}^{(j)}\notin A_{\E_{j+1}}}\mu(\widetilde{\fol_{jt}},\widetilde{p}_{r}^{(j)})\cr
                                                         &=\nn(\fol_{j+1},\M_{j+1})-\nn(\fol_{j+1},\E_{j+1}) - N(\fol_j,\aa_{\W_j})
\end{align}
which results   
\begin{equation}\label{for92}
\mu(\fol_j,\W_j) = \nn(\fol_j,\M_j)-\nn(\fol_{j+1},\M_{j+1})+\nn(\fol_{j+1},\E_{j+1}) + N(\fol_j,\aa_{\W_j})
\end{equation}
where necessarily $N(\fol_j,\aa_{\W_j})\le N(\fol_{j-1},\aa_{\E_j}) = N(\fol_{j-1},\aa_{\W_{j-1}})$. From Theorem (\ref{theoremd}), we obtain the following
\begin{align}\label{for77}
 \mu(\fol_j,\W_j)=& -\eta_{j+1}(\ell_1,\ldots,\ell_{j+1}) +\nn(\fol_{j+1},\E_{j+1}) + N(\fol_j,\aa_{\W_j})\cr
                 =& (1+\ell_{j+1})^{n-1}\chi(\W_0)-\ell_{j+1}^2(1+\ell_{j+1})^{n-2}\frac{\Lambda_0^{(n)}}{(n-1)^j}+\cr
                 &+(n\ell_{j+1}+1)(1+\ell_{j+1})^{n-2}\bigg((k-1)\deg(\W_0)-\Lambda_0^{(n)}\sum_{i=1}^j\frac{\ell_i}{(n-1)^i}\bigg)+N(\fol_j,\aa_{\W_j})
\end{align}
Notice that for $j=0$, $\mu(\fol_0,\W_0)$  agrees with \cite{indices}. Furthermore, Clearly, (\ref{for92}) coincides with (\ref{for971}) since $\ell_{j+1}=0$. Again, given that
$\pi_{j+1}|_{\M_{j+1}\setminus\E_{j+1}}$ is a biholomorphism, we get $N(\fol_j,A_{\W_j})=N(\fol_{j+1},A_{\E_{j+1}})$ which results
\begin{align*}
\mu(\fol_{j+1},\bigcup_{i}\W_i^{(j+1)}) &= \nn(\fol_{j+1},\E_{j+1})+N(\fol_{j+1},A_{\E_{j+1}})\cr
               &=\mu(\fol_j,\W_j)+\nn(\fol_{j+1},\M_{j+1})-\nn(\fol_j,\M_j).
\end{align*}
Again, in the same way, we conclude the Item (b) of Theorem.
\end{subsection}

\begin{example}\label{examp11} \rm Let $\fol_0$ be the one-dimensional holomorphic
foliation of degree $4$ defined on $\M_0=\P^3$. This foliation is described in the open affine set  $U_3=\{[\xi_i]\in\P_3|\xi_3\neq0\}$ by the following vector
field
\begin{equation}\label{for219}
X_0=\big(P_3(z)+P_4(z)\big)\frac{\partial}{\partial z_1}+\big(Q_3(z)+Q_4(z)\big)\frac{\partial}{\partial
z_2}+\bigg(\sum_{i=1}^{4}R_i(z)\bigg)\frac{\partial}{\partial z_3}, 
\end{equation} where $z_i=\xi_{i-1}/\xi_3$ and
$$P_i(z)=\sum_{j=0}^{i}z_1^{i-j}z_2^jp_{ij}(z_3),
Q_i(z)=\sum_{j=0}^{i}z_1^{i-j}z_2^jq_{ij}(z_3),
R_i(z)=\sum_{j=0}^{i}z_1^{i-j}z_2^jr_{ij}(z_3)$$ with
$p_{ij},q_{ij},r_{ij}\in\C[z_3]$ are generic polynomials of degree $4-i$. Except for $q_{30}$ and $r_{10}$, which are identically null.

Thus, $\W_0=\{[\xi]\in\P^3|\xi_0=\xi_1=0\} \subset \sing(\fol_0)$ and is of type II. The singular set of $\fol_0$ contains another 36 closed points disjunct of $\W_0$. Thus, $\mu(\fol_0,\W_0) = 46$, which results in $N(\fol_0,\aa_{W_0})=21$. See Theorem (\ref{theorem1}).  Let $\pi_1:\M_1\to \M_0$ be the blowup of $\M_0$ along $\W_0$ and $\fol_1$ be the strict transform of $\fol_0$ under $\pi_1$. In the chart $\big((U_3)_1,\sigma_1(u)\big)$, the foliation $\fol_1$ is described by the following vector field
\begin{align}\label{for220}
X_1&=\big(u_1^2p_{30} + \widetilde{P_1}\big)\frac{\partial}{\partial u_1}+\big(u_1u_2(q_{31}-p_{30}) + u_1^2q_{40} +\widetilde{Q_1}\big)\frac{\partial}{\partial
u_2}\cr &+\bigg(u_2r_{11}+ u_1r_{20} + \widetilde{R_1}\bigg)\frac{\partial}{\partial u_3}
\end{align}
which $\W_1=\{u\in(U_3)_1|u_1=u_2=0\} \subset \sing(\fol_1)$ and $\mbox{m}_{\W_1}(\widetilde{P_1}),\mbox{m}_{\W_1}(\widetilde{Q_1})\ge3$ and $\mbox{m}_{\W_1}(\widetilde{R_1})\ge2$. In addition to $\W_1$, there are 4 other homeomorphic curves to $\P^1$ contained in $\E_1$ of which 3 are given by the roots of $r_{11}=r_{11}(u_3)$ and the fourth is defined as $\pi_1^{(-1)}\big([0:0:1:0]\big)$. Furthermore, $m_{\E_1}(\pi_1^*\fol_{0})=\ell_1=1$.

Thus, $\W_1$ is of type I and is homemorphic to $\W_0$ with $\pi_1(\W_1)=\W_0$.  Let $\pi_1:\M_2\to \M_1$ be the blow-up of $\M_1$ along $\W_1$ and $\fol_2$ be the strict transform of $\fol_1$ under $\pi_2$ with $m_{\E_2}(\pi_2^*\fol_{1})=\ell_2=1$. But, this time, in $\E_2$ there is no homeomorphic curve to $\W_1$. It is not difficult to see that $\widetilde{\fol_2}=\fol_2|_{\E_2}$ contains 12 closed points in its singular set. In order to verify Theorem (\ref{theoremd}), we need to make a small perturbation on $\fol_1$ because $\sing(\fol_1)$ contains four curves on $\E_1$.
Let $\fol_{1t}$ be the one-dimensional holomorphic foliation on $\M_1$ which is described in $(U_3)_1$ by following vector field
\begin{equation}\label{for221}
X_{1t}= X_1+ tY_1,
\end{equation}
where 
$$ Y_1 = \sum_{j=0}^{2}a_ju_1^{2-j}u_2^j\frac{\partial}{\partial u_1}+\sum_{j=0}^{2}b_ju_1^{2-j}u_2^j\frac{\partial}{\partial u_2}+\big(c_0u_1+c_1u_2\big)\frac{\partial}{\partial u_3}$$
with $X_1$ given in (\ref{for220}) and $a_i, b_i,c_i\in\C[u_3]$  are generic polynomials of degree $1+i, i$ and $2+i$, respectively. 

Therefore, $\fol_{1t}$ is a small perturbation of $\fol_1$, but now $\W_1$ is a special component of $\sing(\fol_{1t})$, $t\ne0$. However, in order for there to be another holomorphic family $\fol_{0t}$ such that $\fol_{1t}$ is the strict transform of $\fol_{0t}$ under $\pi_1$ then we must need to have $a_2\equiv0$ in (\ref{for221}). In fact, let $\fol_{0t}$ be the holomorphic family which is described on the affine open set $U_3$ by the following vector field

\begin{equation}\label{fexam1}
X_{0t}=X_0 +tY_0,
\end{equation}
where $X_0$ as in (\ref{for219}) and 
$$Y_0=(a_0z_1^3+a_1z_1z_2)\frac{\partial}{\partial z_1}+\bigg(b_0z_1^4 + (b_1+a_0)z_1^2z_2+(b_2+a_1)z_2^2\bigg)\frac{\partial}{\partial z_2}+(c_0z_1^2+c_1z_2)\frac{\partial}{\partial z_3}.$$

The foliation $\fol_{0t}$ also has degree 4 and $\W_0$ is a type I component for $\fol_{0t}$, with $t\ne0$. Furthermore,  $m_{\E_1}(\pi_1^*\fol_{0t})=\ell_1=1$ and $\fol_{1t}$ is the strict transform of $\fol_{0t}$ from $\pi_1$. 

Changing the coefficients of $a_i,b_i,c_i$ if necessary, we can admit that $\W_1$ is the unique curve of $\sing(\fol_{1t})$ which is also a special component of $\sing(\fol_{1t})$, $t\ne0$. In addition to $\W_1$, $\fol_{1t}$ has four more singularities denoted by $p_i^{(1t)}$ in $\E_1$ and $m_{\E_2}(\pi_2^*\fol_{1t})=\ell_2=1$ for all $t\ne0$. Therefore, from Theorems \ref{theorem1} and \ref{theoremB}, we get 
\begin{equation}\label{for711}
\mu(\fol_{1t},\W_1) + \sum_{i=1}^{4}\mu(\fol_{1t},p_i^{(1t)}) = \mu(\fol_{0t},\W_0) - 14 = 14+ N(\fol_{0t},A_{\W_0})
\end{equation}
for $t\ne0$. See also the Remark 3.4.  Given $\mu(\fol_{1t},\W_1) = 22$ since $\fol_{1t}$ is special along $\W_1$, we conclude
\begin{equation}\label{for712}
N(\fol_{0t},A_{\W_0}) = 8 + \sum_{i=1}^{4}\mu(\fol_{1t},p_i^{(1t)}) = 12.
\end{equation}
Therefore, for $t\ne0$, the singular set of $\fol_{0t}$ contains 36 + 9 =45 isolated closed points disjunct to $\W_0$.  Keeping this notation, let $\fol_{2t}$ the strict transform of $\fol_{1t}$ from $\pi_2$. Thus, it is not difficult to see that $\fol_{2t}$ contains 12 isolated singularities on $\E_2$, counting the multiplicities. Consequently, the singular set of $\fol_{2t}$ contains $45 + 4 +12=61$ isolated closed points, counted the multiplicities. These facts agree with Theorem \ref{theoremd}.

\end{example}

\section{ An application: Seidenberg's Theorem for non-isolated singularities}
\begin{lema}
\label{lemafp0} Let $\mathcal{F}_0$ be a one-dimensional holomorphic foliation on $\M_0=\mathbb{P}^n$ of degree $k$, and its singular locus contains a smooth curve $\W_0$ of degree $\deg(\W_0)$ and the Euler characteristic $\chi(\W_0)$.
We will assume the existence of a blow-up sequence $\{\pi_j,\M_j,\W_j,\mathcal{F}_j,\E_j\}$.

If each $\W_j$ is homeomorphic to $\W_{j-1}$ with $\pi_j(\W_j)=\W_{j-1}$ for all $j\ge 1$, then there exists a natural number $r$ such that $m_{\W_{r}}(\mathcal{F}_{r}) = 1$ and $\W_{r}$ is of type III.
\end{lema}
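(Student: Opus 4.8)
The plan is to prove the equivalent statement that the order of annulment $\ell_{r+1}=m_{\E_{r+1}}(\pi_{r+1}^*\fol_r)$ vanishes for some $r$, and then to read off $m_{\W_r}(\fol_r)=1$ together with type III directly from the definition (\ref{for12}). First I would record that the sequence $(\ell_j)_{j\ge1}$ is bounded. By Proposition \ref{propl} one has $m_{\W_{j-1}}(\fol_{j-1})\le 1+m_{\W_0}(\fol_0)=:M$ for every $j$, and by (\ref{for12}) one always has $\ell_j\le m_{\W_{j-1}}(\fol_{j-1})$ (in the non-dicritical case $\ell_j=\min\{m_1-1,m_n\}\le m_n$, and in the dicritical case $\ell_j=m_1=m_n$). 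Hence $0\le \ell_j\le M$ for all $j$.

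The key step is an integrality argument built on the computation leading to (\ref{for51}). Because $\W_j$ is homeomorphic to $\W_{j-1}$ and $\pi_j(\W_j)=\W_{j-1}$, the relation (\ref{crft}) together with (\ref{for22}) gives, for each $j\ge1$,
\[
\int_{\W_j}c_1(\tt_{\fol_j}^*)=\int_{\W_{j-1}}c_1(\tt_{\fol_{j-1}}^*)-\ell_j\frac{\Lambda_0^{(n)}}{(n-1)^j},
\]
where I have used the projection formula for the $\pi_j^*$ term (the restriction $\pi_j\colon\W_j\to\W_{j-1}$ has degree one) and $\int_{\W_j}\E_j=\Lambda_0^{(n)}/(n-1)^j$. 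Both integrals appearing here are pairings of the integral class $c_1(\tt_{\fol_i}^*)$ with the fundamental class of the curve $\W_i$, hence are genuine integers. Therefore the difference is an integer, that is,
\[
\frac{\ell_j\,\Lambda_0^{(n)}}{(n-1)^j}\in\zz\qquad\text{for every }j\ge1.
\]

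Now I would combine boundedness with the geometric growth of the denominator. Since $n\ge3$ we have $n-1\ge2$, so $(n-1)^j\to\infty$; together with $0\le\ell_j\le M$ this yields $0\le \ell_j|\Lambda_0^{(n)}|/(n-1)^j\le M|\Lambda_0^{(n)}|/(n-1)^j<1$ for all $j$ past some index. For such $j$ the integer above must vanish, and since $\Lambda_0^{(n)}=(n+1)\deg(\W_0)-\chi(\W_0)\ge2>0$ (because $\deg(\W_0)\ge1$, $n\ge3$ and $\chi(\W_0)\le2$), we conclude $\ell_j=0$. Fix $r$ with $\ell_{r+1}=0$. To finish I translate this via (\ref{for12}): since $\W_r\subset\sing(\fol_r)$ the tangential multiplicity $m_n^{(r)}=m_{\W_r}(\fol_r)\ge1$, so the dicritical value $\ell_{r+1}=m_1^{(r)}\ge1$ is impossible; hence $\W_r$ is non-dicritical and $\ell_{r+1}=\min\{m_1^{(r)}-1,\,m_n^{(r)}\}=0$. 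As $m_n^{(r)}\ge1$, this forces $m_1^{(r)}=1$, and then $1\le m_n^{(r)}\le m_1^{(r)}=1$ gives $m_n^{(r)}=m_1^{(r)}=1$. Thus $m_{\W_r}(\fol_r)=1$ and $m_n^{(r)}=m_1^{(r)}$, i.e.\ $\W_r$ is of type III.

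I expect the main obstacle to be making the integrality step airtight: justifying that $\int_{\W_j}c_1(\tt_{\fol_j}^*)$ is a bona fide integer (the pairing of an integral Chern class with the cycle class $[\W_j]$) and that the one-step recursion above is valid in the present setting, which is precisely where the homeomorphism and $\pi_j(\W_j)=\W_{j-1}$ hypotheses enter through (\ref{for22}) and (\ref{crft}). The remaining ingredients—the uniform bound on $\ell_j$ from Proposition \ref{propl}, the positivity of $\Lambda_0^{(n)}$, and the short case analysis on (\ref{for12})—are routine.
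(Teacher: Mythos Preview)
Your proof is correct and is actually cleaner than the paper's. Both arguments rest on the same principle---that a quantity of the form $\text{(bounded)}\cdot\Lambda_0^{(n)}/(n-1)^j$ which is forced to be an integer must eventually vanish---but you extract this constraint more directly. The paper argues by contradiction: assuming $\ell_j\ge1$ for all $j$, it invokes the full Milnor-number formula of Theorem \ref{theoremB} to see that the expression
\[
a_j=(\ell_{j+1}+1)^{n-2}\Lambda_0^{(n)}\bigg(\frac{\ell_{j+1}^2}{(n-1)^j}+(n\ell_{j+1}+1)\sum_{i=1}^{j}\frac{\ell_i}{(n-1)^i}\bigg)
\]
is a natural number; since the partial sum inside does not tend to zero, the paper passes to a constant subsequence $\ell_{j_i+1}=\ell$ and compares $a_{j_{i_1}}$ with $a_{j_{i_2}}$ to squeeze a positive integer below $1$. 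You instead use only (\ref{crft}) and (\ref{for22}) to get the one-step relation $\int_{\W_j}c_1(\tt_{\fol_j}^*)-\int_{\W_{j-1}}c_1(\tt_{\fol_{j-1}}^*)=-\ell_j\Lambda_0^{(n)}/(n-1)^j$, observe that each side is the pairing of an integral Chern class with a curve, and conclude $\ell_j\Lambda_0^{(n)}/(n-1)^j\in\zz$ directly. This bypasses Theorem \ref{theoremB} entirely, avoids the subsequence argument, and makes the role of $\Lambda_0^{(n)}\ne0$ transparent. The final translation from $\ell_{r+1}=0$ back to $m_{\W_r}(\fol_r)=1$ and type III via (\ref{for12}) is the same in spirit as the paper's contrapositive step $\ell_j\ge1$.
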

\begin{proof}
Through contradiction, let us assume that this theorem is false, i.e., $m_{\W_j}(\mathcal{F}_j) \ge 2$ or $\W_j$ is not of type III for all $j\ge 1$. From Equation (\ref{for131}), we can make the assumption:

\begin{equation}\label{for71}
          1 \le \ell_j = m_{\E_j}\big(\pi_j^*\fol_{j-1}\big) \le m_{\W_0}(\fol_0)+1,\quad\forall j\ge1.
\end{equation}

However, as $\mu(\mathcal{F}_j,\W_j)$ given in Theorem \ref{theoremB} is a natural number for all $j\ge0$, we can infer that the sequence:
\begin{equation}\label{for78}
a_j = (\ell_{j+1}+1)^{n-2}\Lambda_0^{(n)}\bigg(\frac{\ell_{j+1}^2}{(n-1)^j}+\big(n\ell_{j+1}+1\big)\dps\sum_{i=1}^{j}\frac{\ell_i}{(n-1)^i}\bigg) \in \N,\quad \forall j.
\end{equation}
On the other hand, the sequence of natural numbers ${\ell_j}$ is bounded, implying the existence of a subsequence ${\ell_{j_i+1}}$ where each $\ell_{j_i+1}=\ell$ is a constant for all $j_i$. Consequently,
\begin{equation}\label{for781}
a_{j_i} = (\ell+1)\Lambda_0^{(n)}\bigg(\frac{\ell^2}{(n-1)^{j_i}}+ \big(n\ell+1\big)\dps\sum_{i=1}^{j_i}\frac{\ell_i}{(n-1)^i}\bigg) \in \N,\quad \forall j_i.
\end{equation}

Therefore, 

$$0< |a_{j_{i_2}}-a_{j_{i_1}}| = (\ell+1)\bigg|\Lambda_0^{(n)}\bigg|\bigg( \frac{\ell^2}{(n-1)^{j_{i_1}}}+\frac{\ell^2}{(n-1)^{j_{i_2}}}+\big(n\ell+1\big)\sum_{r=j_{i_1}}^{_{j_{i_2}}}\frac{\ell_i}{(n-1)^i} \bigg).$$

For sufficiently large $j_{i_1}$ and $j_{i_2}$, we obtain $0<|a_{j_{i_2}}-a_{j_{i_1}}|<1$. However, this is absurd, considering that $a_j$ is a natural number for all $j$.
\end{proof}

\subsection{Maximum number of blowups for the desingularization}
Now, we calculate the maximum number of blowups  needed until we reach $\ell_i = m_{\E_i}\big(\pi_i^*\fol_{i-1}\big)=0$. In fact, we will suppose $\ell_j=\ell_1$ for $j=1,\ldots,N_1+1$. Then, from (\ref{for78}), we get

$$a_j = (\ell_{j+1}+1)^{n-2}\Lambda_0^{(n)}\bigg(\frac{\ell_{j+1}^2}{(n-1)^j}+\big(n\ell_{j+1}+1\big)\dps\sum_{i=1}^{j}\frac{\ell_i}{(n-1)^i}\bigg)$$
is a natural number for $j=1,\ldots,n_1$. Thus, 

$$b_j =a_j-a_{j-1}= \frac{\ell_1(1+\ell_1)^{n-2}(1+2\ell_1)\Lambda_0^{(n)}}{(n-1)^j}\in \N,\quad\text{for } j=1,\ldots,N_1.$$

But, we can consider $\ell_1(1+\ell_1)^{n-2}(1+2\ell_1)\Lambda_0^{(n)}=(n-1)^{\alpha_0}\beta_0$ such that $n-1$ does not divide $\beta_0$. Thus, we have the worst situation that $n_1 \le \alpha_0 \le \lfloor \log_{n-1}\big(\ell_1(1+\ell_1)^{n-2}(1+2\ell_1)\Lambda_0^{(n)}\big)\rfloor$.

Thus, we can now consider $\ell{n_1+1}>\ell_{n_1+2}=...=\ell_{n_2}=\ell_{n_2+1}$. Therefore, 

$$ b_j =a_j-a_{j-1}= \frac{\ell_{n_1+1}(1+\ell_{n_1+1})^{n-2}(1+2\ell_{n_1+11})\Lambda_0^{(n)}}{(n-1)^j}$$ 
is a natural number for $j=n_1+1,\ldots,n_2.$                

Consequently, 

$$n_2 <\lfloor \log_{n-1}\big(\ell_1(1+\ell_1)^{n-2}(1+2\ell_1)\Lambda_0^{(n)}\big)\rfloor.$$

Therefore, repeating the same argument, it will be necessary  
$$ \sum_{\ell=1}^{\ell_1}\lfloor \log_{n-1}\big(\ell_1(1+\ell_1)^{n-2}(1+2\ell_1)\Lambda_0^{(n)}\big)\rfloor$$
for we get $\ell_j = m_{\E_j}\big(\pi_j^*\fol_{j-1}\big)=0$.
\begin{example} \rm 
    Let $\fol_0$ be the one-dimensional holomorphic
foliation of degree $m$ defined on $\M_0=\P^3$ which is described in the open affine set  $U_3=\{[\xi_i]\in\P_3|\xi_3\neq0\}$ by the following vector
field
$$X_0=\big(z_1^m-z_1z_2^2z_3^{m-2}\big)\frac{\partial}{\partial z_1}+\big(z_1^m-z_2^3z_3^{m-2}\big)\frac{\partial}{\partial z_2} -z_2^2z_3^{m-1}\frac{\partial}{\partial z_3}$$
where $z_i=\xi_{i-1}/\xi_3$. We also assume that $m$ is large enough. The singular set of $\fol_0$
contains two curves $\W_0=\{[\xi]\in\P^3| \xi_0=\xi_1=0\}$ and $\W_0^{(1)}=\{[\xi]\in\P^3| \xi_0=\xi_2=0$. We will consider the blow-up sequence $\{\pi_j,\M_j,\W_j,\fol_j,\E_j\}$ where $\W_j$ is defined in the chart $\big((U_3)_j,\sigma_1(x)=z^{(j-1)}\big)$, with $z^{(0)}=z$, by equations $x_1=x_2=0$. In this chart, the strict transform $\fol_j$ from $\fol_{j-1}$ via $\pi_j$ is described by the following vector field
$$X_j=x_1\big(x_1^{m-2j-}-x_2^2x_3^{m-2}\big)\frac{\partial}{\partial x_1}+\big(x_1^{m-3j}(1-jx_1^{j-1}x_2)+(j-1)x_2^3x_3^{m-2}\big)\frac{\partial}{\partial x_2} -x_2^2x_3^{m-1}\frac{\partial}{\partial x_3}$$ for $j$ such that $m-3j\ge0$. Thus, the singular set of $\fol_j$ always contains two curves $x_1=x_2=0$ and $x_1=x_3=0$ if $m-3j\ge 1$. We will determine the Milnor numbers $\mu(\fol_j,\W_j)$ for some $j$. Firstly, for $j=0$, we can make the holomorphic perturbation $\fol_{0t}$ of $\fol_0$ which is described the following vector field
$$X_{0t}=X +t\bigg(\sum_{i=0}^{3}a_i(z_3)z_1^{3-i}z_2^i\frac{\partial}{\partial z_1}+\sum_{i=0}^{3}b_i(z_3)z_1^{3-i}z_2^i\frac{\partial}{\partial z_2}+\sum_{i=0}^{2}c_i(z_3)z_1^{2-i}z_2^i\frac{\partial}{\partial z_3}\bigg)$$
where $a_i,b_i,c_i$ are generic polynomials of degree $m-3,m-3$ and $m-2$, respectively. Thus, the singular set of $\fol_{0t}$ is composed by the curve $\W_0$ and some isolated closed points. It is not difficult to show Theorem \ref{theoremd}(a) agrees with number of isolated singularities of $\fol_{1t}$ on $\E_1$, where $\fol_{1t}$ is the strict transform of $\fol_{0t}$ from $\pi_1$ for $t\ne0$. From Theorems \ref{theorem1} and \ref{theoremB}, we can determine $\mu(\fol_0,\W_0)$. Now, we will determine $\mu(\fol_1,\W_1)$. Initially, we will consider the holomorphic perturbations of $\fol_1$ as follows
\begin{equation}\label{for345}X_{1t} = X_1 + t\bigg(\sum_{i=0}^{3}a_i(x_3)x_1^{3-i}x_2^i\frac{\partial}{\partial x_1}+\sum_{i=0}^{3}b_i(x_3)x_1^{3-i}x_2^i\frac{\partial}{\partial x_2}+\sum_{i=0}^{2}c_i(x_3)x_1^{2-i}x_2^i\frac{\partial}{\partial x_3}\bigg)
\end{equation}
where $a_i,b_i,c_i\in \C[z_3]$ of degree $m-5+i,m-6+i$ and $m-4+i$. Thus, $m_{\E_2}(\pi_1^*\fol_{1t})=2$ for all $t$. Again, Theorem \ref{theoremd}(a) agrees with number of isolated singularities of $\fol_{2t}$ on $\E_1$, where $\fol_{2t}$ is the strict transform of $\fol_{1t}$ from $\pi_2$  for $t\ne0$. However, the main problem of these perturbations is that there is no holomorphic foliation $\fol_{0t}$ such that $\fol_{1t}$ is the strict transform from $\fol_{0t}$ via $\pi_1$ because $a_3$ can be not identically null. But, if $a_3\equiv0$ then $\fol_{1t}$ is not special along $\W_1$. In fact, from (\ref{for345}),  there are embedding closed points associated to $\W_1$, given by $A^t_i(0,0,x^t_{3i})$ where $x^t_{3i}$ is a root of $t(b_3(x_3)-x_3^{m-2}=0$. Thus, in order to make a holomorphic deformations $\fol_{1t}$ such that $\W_1$ is special component we need to consider $m_{\E_2}(\pi_2^*\fol_{1t})=1$. More precisely, we will consider the following deformation $\fol_{1t}$  described by vector field
\begin{equation}\label{for346}X_{1t} = X_1 + t\bigg(\big(\sum_{i=0}^{2}b_i(x_3)x_1^{2-i}x_2^i+b_3(x_3)x_2^3\big)\frac{\partial}{\partial x_2}+\big(x_1c_0(x_3) + x_2c_1(x_3)\big)\frac{\partial}{\partial x_3}\bigg)
\end{equation}
where $b_i$ and $c_i$ are generic polynomials of degree $m-5+i$ and $m-3+i$, respectively, except for $b_3$ which also have degree equal to $m-3$. Now, $\fol_{1t}$ is special along $\W_1$ and there exists a one-dimensional holomorphic foliation $\fol_{0t}$ on $\M_0$ such that $\fol_{1t}$ is the strict transform of $\fol_{0t}$ from $\pi_1$. In fact, $\fol_{0t}$ is described by the following vector field
\begin{equation}\label{for347}X_{0t} = X_0 + t\bigg(\big(\sum_{i=0}^{2}b_i(z_3)z_1^{5-2i}z_2^i+b_3(z_3)z_2^3\big)\frac{\partial}{\partial z_2}+\big(z_1^3c_0(x_3) + z_1z_2c_1(z_3)\big)\frac{\partial}{\partial z_3}\bigg).
\end{equation}
Let $\fol_{2t}$ be strict transform from $\fol_{1t}$ via $\pi_2$. From Theorem \ref{theoremd}(a) we have that $\nn(\fol_{2t},\E_2)=4m-8$ for $t\ne0$, since $\ell_1=2$ and $\ell_1=1$. The same idea can be used to calculate $\mu(\fol_j,\W_j$ for $j>1$.
\end{example}
\subsection{Normal forms for non-isolated singularities}

We will focus on the germs of holomorphic foliations $\mathcal{F}_0$ defined in an open set $U_0\subset\mathbb{C}^3$ such that their singular set contains a smooth curve $\W_0$ of type III with $\mbox{m}_{\W_0}(\mathcal{F}_0)=1$. Without loss of generality, we can assume that $\W_0$ is defined as $z_0=z_1=0$. Keeping the notation, we consider the blow-up sequence $\{\pi_i,\M_i,\W_i,\mathcal{F}_i,\E_i\}$.

According to Theorem (\ref{theoremt}), there exist two curves in $\E_1$ that are homeomorphic to $\W_0$, counting multiplicities, since $\mbox{m}_{\E_1}(\pi_1^*\mathcal{F}_0)=0$. We will start by considering that $\mathcal{F}_0$ is described by the following vector field:
\begin{equation}\label{champ1}
X_0=\dps\sum_{i=1}^{3}(z_1p_{i0}(z_3) +z_2 p_{i1}(z_3) + P_i(z))\frac{\partial}{\partial z_i}, \quad \mbox{m}_{\W_0}(P_i)\ge2.
\end{equation}
By the way, from Equation (\ref{defmc}) we have 
\begin{equation}\label{matr}
\mathbf{A}_{X_0}|_{\W_0} = \begin{pmatrix} p_{10}& p_{11}
\cr
                                   p_{20}& p_{21}\end{pmatrix}.
                                   \end{equation}

Let $\lambda_1=\lambda_1(z_3)$ and $\lambda_2=\lambda_2(z_3)$ be the eigenvalues of the
matrix (\ref{matr}).Thus, we have the following three cases to consider
\begin{itemize}
\item [(i)] $\lambda_1\cdot\lambda_2\not\equiv0$,  $\lambda_1\not\equiv n\lambda_2$ and $\lambda_2\not\equiv n\lambda_1$, $n\in\N$; 
\item [(ii)] $\lambda_1\not\equiv0$ and $\lambda_2\equiv0$;
\item [(iii)] $\lambda_1\equiv\lambda_2\equiv0$;
\end{itemize}
\begin{proposition}\label{proed} If  the eigenvalues $\lambda_1$ and $\lambda_2$  of the matrix $\mathbf{A}_{X_0}|_{\W_0}$ given in (\ref{matr}) are non-identically null with $\lambda_1\not\equiv n\lambda_2$ and $\lambda_2\not\equiv n \lambda_1$, $\forall n\in \N$ then these conditions are invariant by blowups along curves $\W_i$ homeomorphic to $\W_0$ with $\pi_i(\W_i) = \W_{0}$.
\end{proposition}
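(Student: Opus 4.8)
The plan is to reduce the statement to a single blow-up and then track how the transverse linear part $\mathbf{A}_{X_i}|_{\W_i}$ from (\ref{defmc}) transforms. Since the conjugacy class of this matrix, and hence the pair of eigenvalues $\lambda_1,\lambda_2$ (viewed as functions along the curve), is independent of the chart — this is the coordinate independence established for Definitions \ref{defm} and \ref{def2} — it suffices to prove the following one-step claim: if the eigenvalues attached to $\W_0$ are non-identically zero and the ratio $\lambda_1/\lambda_2$ is not identically a positive rational constant (the non-resonance of case (i), phrased so as to match the condition $\lambda_1/\lambda_2\notin\Q_+$ of Theorem \ref{theoremA}), then each homeomorphic curve $\W_1\subset\E_1$ with $\pi_1(\W_1)=\W_0$ inherits exactly these two properties. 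Inheriting \emph{both} keeps $\W_1$ elementary with $\mbox{m}_{\W_1}(\fol_1)=1$ (a nonzero $\mathbf{A}_{X_1}$ forces multiplicity one, by the observation following Definition \ref{def2}) and of type III, so the local form (\ref{for124}) recurs and a straightforward induction on $i$ gives the full statement.

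First I would blow up the local model (\ref{champ1}) in the chart $z=\sigma_1(u)=(u_1,u_1u_2,u_3)$. Using $\mbox{m}_{\W_0}(P_i)\ge2$, one finds $\dot u_1=u_1\big[(p_{10}+u_2p_{11})+u_1\widetilde P_1\big]$, $\dot u_3=u_1\big[(p_{30}+u_2p_{31})+u_1\widetilde P_3\big]$, and $\dot u_2=h(u_2,u_3)+u_1(\cdots)$, where $h(u_2,u_3)=p_{20}+u_2(p_{21}-p_{10})-u_2^2p_{11}$. The factor $u_1$ divides $\dot u_1$ and $\dot u_3$ but not $\dot u_2$, which is precisely the type III picture with $\ell=m_{\E_1}(\pi_1^*\fol_0)=0$. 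On $\E_1=\{u_1=0\}$ the curves homeomorphic to $\W_0$ are the graphs $u_2=\psi(u_3)$ over the roots of $h(\cdot,u_3)=0$; there are two of them counted with multiplicity by Theorem \ref{theoremt}(b). Substituting $u_2=v_2/v_1$ into $h=0$ and using the eigenvector relations $p_{10}v_1+p_{11}v_2=\lambda v_1$, $p_{20}v_1+p_{21}v_2=\lambda v_2$ shows that each root $\psi$ is the slope $v_2/v_1$ of an eigenvector of $\mathbf{A}_{X_0}|_{\W_0}$, with $p_{10}+\psi p_{11}=\lambda$ the associated eigenvalue.

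Next I would center on $\W_1$ via $w_1=u_1$, $w_2=u_2-\psi(u_3)$, $w_3=u_3$ and read off $\mathbf{A}_{X_1}|_{\W_1}$. Because $\dot w_1=\dot u_1$ still carries the factor $u_1=w_1$, its $w_2$-derivative vanishes on $\{w_1=w_2=0\}$, so the matrix is lower triangular and its eigenvalues are the diagonal entries. The $(1,1)$-entry is $p_{10}+\psi p_{11}=\lambda$, and the $(2,2)$-entry is $\partial_{u_2}h|_{u_2=\psi}=(p_{21}-p_{10})-2\psi p_{11}=(p_{10}+p_{21})-2\lambda=\operatorname{tr}\big(\mathbf{A}_{X_0}|_{\W_0}\big)-2\lambda$. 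Hence the eigenvalue pair is replaced by $\{\lambda_1,\ \lambda_2-\lambda_1\}$ on the curve through the $\lambda_1$-direction and by $\{\lambda_2,\ \lambda_1-\lambda_2\}$ on the other; writing $s=\lambda_2/\lambda_1$, the ratio is sent to $s-1$ (respectively $1/s-1$).

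Finally I would verify preservation. On the first curve $\lambda_1\not\equiv0$ by hypothesis; $\lambda_2-\lambda_1\not\equiv0$, for otherwise $\lambda_2/\lambda_1\equiv1\in\Q_+$; and if the new ratio $\lambda_1/(\lambda_2-\lambda_1)=1/(s-1)$ were identically a positive rational, then $s$ would be a rational strictly greater than $1$, hence a positive rational — all excluded. The symmetric check disposes of the second curve, and since the induced action on $s$ is generated by the integer Möbius maps $s\mapsto s-1$ and $s\mapsto 1/s$, both bijections of $\P^1$ defined over $\Q$, the non-resonance persists through the entire sequence. The main obstacle is the transverse linearization above: it must be carried out with the $z_3$-dependence of $\psi$ (so that terms $\psi'(u_3)\dot u_3$ appear, but they are harmless since they carry the factor $u_1$) and only on the Zariski-open locus of $\W_0$ where $\lambda_1,\lambda_2$ remain distinct, so that $\psi(u_3)$ is a well-defined holomorphic branch and $\W_1$ is genuinely homeomorphic to $\W_0$; this is the origin of the ``for almost all points'' clause and is exactly where Theorem \ref{theoremt}(b) and the generic simplicity of the eigenvalues enter.
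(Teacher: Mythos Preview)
Your approach is essentially the paper's: blow up in the chart $\sigma_1$, identify the singular curves $\{u_1=0,\ u_2=\psi_i(u_3)\}$ on $\E_1$ as the eigendirections of $\mathbf A_{X_0}|_{\W_0}$, recenter, and read off the new transverse linear part as lower triangular with eigenvalues $\{\lambda_1,\lambda_2-\lambda_1\}$ (respectively $\{\lambda_2,\lambda_1-\lambda_2\}$). The paper carries out exactly this computation with explicit formulas for $\psi_i$ and stops once it observes that the new eigenvalues are ``distinct and non-identically null''.

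Where you go further is in the invariance check, and your rephrasing of the hypothesis as $\lambda_1/\lambda_2\notin\Q_+$ rather than the literal ``$\lambda_i\not\equiv n\lambda_j$ for $n\in\N$'' is not cosmetic but necessary: the integer condition alone is \emph{not} blowup-invariant (take $\lambda_1\equiv 2,\ \lambda_2\equiv 3$, which after one step becomes $\{2,1\}$), whereas the $\Q_+$ condition is, by your M\"obius argument $s\mapsto s-1$. The paper's proof does not address this point; your version is the one that actually supports the induction and the use of the proposition in Theorem~\ref{theoremA}.
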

\begin{proof}
The eigenvalues $\lambda_1$ and $\lambda_2$ of the matrix (\ref{matr}) are given by
\begin{equation}\label{autv}
\lambda_i=\dps\frac{p_{10}+p_{21}}{2}+(-1)^i
\frac{\sqrt{\Delta}}{2} \end{equation} where
$$\Delta=\Delta(z_3)= (p_{10}-p_{21})^2+4p_{11}p_{20}.$$ 

Thus, in the chart $((U_1)_1,\sigma_1(u))$, $\fol_1$ is described by the vector field $X_1$ as
follows
\begin{equation}\label{ft31}
X_1=\dps u_1Q_1(u)\frac{\partial}{\partial
u_1}+(Q_2(u)-u_2Q_1(u))\frac{\partial}{\partial
u_2}+u_1Q_3(u)\frac{\partial}{\partial u_3}
\end{equation}
where $$Q_i(u)=p_{i0}+u_2p_{i1}+u_1\widetilde{Q_i}(u),\quad p_{ij}=p_{ij}(u_3),\quad \forall i$$ for some functions $\widetilde{Q_i}$.
However, since $\W_0$ is of type III, meaning it is a non-dicritical component, we have:
$$Q_2(0,u_2,u_3)-u_2Q_1(0,u_2,u_3)=p_{20}+u_2(p_{21}-p_{10})-u_2^2p_{11}\not\equiv0.
$$
This implies that there are two curves $\W_i^{(1)}$ that are homeomorphic to $\W_0$, defined as follows:
$\W_i^{(1)}=\{u\in (U_1)_1|u_1=u_2-\psi_i(u_3)=0\}$ where
\begin{equation}\label{ece}
\dps\psi_i(u_3)=\frac{p_{21}-p_{10}+(-1)^i\sqrt{\Delta}}{2p_{11}}
\end{equation}
for $p_{11}\not\equiv0$. ( If $\dps\lim_{u_3\to \alpha}\psi_i(u_3)=\infty$ then in the other chart $\big((U_1)_2,\sigma_2(v)\big)$, the curve $\W_i^{(1)}$ is defined as $v_1-\widetilde{\psi}_i(v_3)=v_2=0$ with $\widetilde{\psi}_i(\alpha)=0$).  Let $F_i(u)=(u_1,u_2-\psi_i(u_3),u_3)=v\in\C^3$ be a local
biholomorphism. Now, the push-forward foliation
$F_{i_{*}}(\fol_1)=\mathcal{G}_i$ 
defined in $V_i$ is described by the
following vector field
$$ Y_1= \dps \big(v_1r_{10}+R_1\big)\frac{\partial}{\partial
v_1}+\big(v_1r_{20}+v_2r_{21}+R_2\big)\frac{\partial}{\partial
v_2}+\big(v_1r_{30}+R_3\big)\frac{\partial}{\partial v_3}$$
where $r_{ij}=r_{ij}(v_3)$ and $\mbox{m}_{\W^{(1)}_1}(R_i)\ge2$. But,
$r_{10}=p_{10}+\psi_1p_{11}=\lambda_1$ and $r_{21}=p_{11}(\psi_2-\psi_1)=\lambda_2-\lambda_1$.
Consequently, the matrix
\begin{equation}\label{matr4}
\mathbf{A}_{Y_1}|_{\W_1^{(1)}} = \begin{pmatrix} r_{10}& 0\cr
                                   r_{20}& r_{21}\end{pmatrix}
                                   \end{equation}
has $\lambda_{11}^{(1)}=\lambda_1$ and $\lambda_{12}^{(1)}=\lambda_2-\lambda_1$ as
eigenvalues which are distinct and non-identically null.  Similarly for the other curve
$\W_2^{(1)}$ whose matrix $\mathbf{A}_{Y_2}|_{\W_2^{(1)}}$ has
$\lambda_{12}^{(1)}=\lambda_2$ and $\lambda_{22}^{(1)}=\lambda_1-\lambda_2$ as eigenvalues.

If $p_{11}\equiv0$ then the eigenvalues of (\ref{matr}) are $\lambda_1=p_{10}$ and $\lambda_2=p_{21}$. In this way, $\W_1^{(1)}=\{u\in(U_1)_1|  u_1 = u_2 +\dps\frac{p_{20}}{p_{21}-p_{10}}=0\}$ is a curve of singularities with eigenvalues $\lambda_{11}^{(1)}=\lambda_1$ and $\lambda_{12}^{(1)}=\lambda_2-\lambda_1$ while $\W_2^{(1)}=\{v\in(U_1)_2|  v_1 =v_2 =0\}$ is the other curve with eigenvalues $\lambda_{21}^{(1)}=\lambda_1-\lambda_2$ and $\lambda_{22}^{(1)}=\lambda_2$. Note that it is impossible $p_{11}\equiv p_{21}-p_{10}\equiv0$ since $\lambda_1$ and $\lambda_2$ are distinct.
\end{proof}

Now, let us consider the second case where $\lambda_1\not\equiv 0$ and $\lambda_2\equiv 0$. This leads to $tr(\mathbf{A}_{{X_0}|_{\W_0}})= p_{10}+p_{21}=\lambda_1$ and $\det(\mathbf{A}_{{X_0}|_{\W_0}})= p_{10}p_{21}-p_{11}p_{20}=0$. Therefore, 

$$\frac{p_{10}(z_3)}{p_{11}(z_3)}=\frac{p_{20}(z_3)}{p_{21}(z_3)}=\varphi(z_3)\implies p_{i0}(z_3)=\varphi(z_3)p_{i1}(z_3).$$ 
In the chart $((U_1)_1,\sigma_1(u))$, $\fol_1$ is
described by the vector field $X_1$ as in (\ref{ft31}). But this time, we have $Q_i(u)=p_{i1}(z_3)\big(u_2+\varphi(u_3)\big)+u_1\widetilde{Q_i}(u)$ for $i=1,2$, and $Q_3(u)=p_{30}+u_2p_{31}+u_1\widetilde{Q_3}(u)$. Since  
$$Q_2(0,u_2,u_3)-u_2Q_1(0,u_2,u_3)=\bigg(p_{21}-u_2p_{11}\bigg)\bigg(u_2+\varphi(u_3)\bigg)$$
there are two homeomorphic curves to $\W_0$ defined when $p_{11}\ne0$ as follows
$\W^{(1)}_1=\big\{ u\in (U_1)_1|u_1=u_2+\varphi(u_3)=0\big\}$ and $\W^{(1)}_2=\big\{ u\in (U_1)_1|u_1=u_2-p_{21}/p_{11}=0\big\}$.

\begin{proposition}\label{pro1an} If the matrix (\ref{matr}) has only one eigenvalue that is not identically zero, denoted by $\lambda_1$, then curves $\W_i^{(1)}$ that are homeomorphic to $\W_0$ with $\pi_1(\W_i^{(1)})=\W_0$ are elementary components of $\mathcal{F}_1$. The eigenvalues of $\fol_1$ at
$\W^{(1)}_1$ are $\lambda_{11}^{(1)}=\lambda_1$ and
$\lambda_{21}^{(1)}\equiv0$ while the eigenvalues of $\fol_1$ at
$\W_2^{(1)}$ are $\lambda_{12}^{(1)}=\lambda_1$ and $\lambda_{22}^{(1)}=-\lambda_1$.
\end{proposition}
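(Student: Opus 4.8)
The plan is to imitate the proof of Proposition \ref{proed}: straighten each of the two singular curves to the form $v_1=v_2=0$ by a translation in the fibre coordinate, push the foliation forward, and read off the eigenvalues of the resulting $2\times 2$ linearization matrix. The only genuinely new input relative to Proposition \ref{proed} is the bookkeeping of the determinant relation $p_{i0}(z_3)=\varphi(z_3)\,p_{i1}(z_3)$ that characterizes the present case; once this is in hand, the same eigenvalue transformation rule $\lambda_i\mapsto\{\lambda_i,\ \lambda_j-\lambda_i\}$ that governs Proposition \ref{proed} applies verbatim, and specializing $\lambda_2\equiv 0$ produces exactly the claimed eigenvalues.

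First I would record the local form of $\fol_1$ in the chart $((U_1)_1,\sigma_1(u))$ given in (\ref{ft31}), now with the specialization $Q_i(u)=p_{i1}(u_3)\big(u_2+\varphi(u_3)\big)+u_1\widetilde{Q_i}(u)$ for $i=1,2$ and $Q_3(u)=p_{30}+u_2p_{31}+u_1\widetilde{Q_3}(u)$, together with the two curves $\W_1^{(1)}=\{u_1=u_2+\varphi(u_3)=0\}$ and $\W_2^{(1)}=\{u_1=u_2-p_{21}/p_{11}=0\}$ already identified before the statement (the case $p_{11}\equiv 0$ being deferred to the complementary chart exactly as in Proposition \ref{proed}). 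For $\W_1^{(1)}$ I would set $v=F_1(u)=(u_1,u_2+\varphi(u_3),u_3)$, so that $Q_1$ becomes $p_{11}v_2+v_1\widetilde{Q_1}$. The first component $v_1Q_1$ then has order $\ge 2$ along $v_1=v_2=0$, forcing the whole top row of the linearization to vanish, while in the second component $Q_2-u_2Q_1$ the coefficient of $v_2$ collapses to $p_{21}+\varphi p_{11}=p_{21}+p_{10}=\lambda_1$ after using $p_{10}=\varphi p_{11}$. Hence the matrix is lower triangular with diagonal $(0,\lambda_1)$, giving $\lambda_{11}^{(1)}=\lambda_1$ and $\lambda_{21}^{(1)}\equiv 0$.

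For $\W_2^{(1)}$ I would set $v=F_2(u)=(u_1,u_2-\rho(u_3),u_3)$ with $\rho=p_{21}/p_{11}$. Now $Q_1$ evaluated on $v_1=v_2=0$ equals $p_{11}(\rho+\varphi)=p_{21}+p_{10}=\lambda_1$, so the top-left entry of the linearization is $\lambda_1$ and, since the remaining $v_2$-dependence of $v_1Q_1$ is quadratic, the top-right entry is $0$. Expanding $Q_2-u_2Q_1$ after the substitution $u_2=v_2+\rho$ and using $\rho p_{11}=p_{21}$, the factor $p_{21}-u_2p_{11}=-v_2p_{11}$ reduces the linear part in $v_2$ to $-p_{11}(\rho+\varphi)v_2=-\lambda_1 v_2$, so the matrix is lower triangular with diagonal $(\lambda_1,-\lambda_1)$, giving $\lambda_{12}^{(1)}=\lambda_1$ and $\lambda_{22}^{(1)}=-\lambda_1$. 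In both cases at least one eigenvalue equals $\lambda_1\not\equiv 0$, so by Definition \ref{def2} each $\W_i^{(1)}$ is a locally elementary component of $\fol_1$, which is the assertion.

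The main obstacle will be purely computational: correctly propagating the translation $u_2\mapsto v_2\mp(\text{shift})$ through the cross term $Q_2-u_2Q_1$ and through the $\varphi'(u_3)u_1Q_3$ (respectively $\rho'(u_3)u_1Q_3$) contribution arising from the $u_3$-dependence of the shift. These contributions only feed the off-diagonal $(2,1)$ entry $r_{20}$, which is irrelevant to the spectrum because both matrices are triangular; the conceptual content is simply that the kernel direction of the degenerate matrix (\ref{matr}) straightens to $\W_1^{(1)}$ and the $\lambda_1$-eigendirection to $\W_2^{(1)}$, with the eigenvalues transforming by the same rule as in (\ref{autv})--(\ref{matr4}). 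A final minor point to dispatch is the degenerate subcase $p_{11}\equiv 0$, handled in the other affine chart of the blow-up verbatim as in the last paragraph of the proof of Proposition \ref{proed}.
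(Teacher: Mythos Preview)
Your proposal is correct and follows essentially the same approach as the paper's proof: straighten each singular curve by a fibre translation, push forward the vector field (\ref{ft31}), and read off the triangular linearization matrix using the relation $p_{i0}=\varphi\, p_{i1}$. The only cosmetic difference is that for $\W_1^{(1)}$ the paper additionally swaps the first two coordinates (taking $F_1(u)=(u_2+\varphi(u_3),u_1,u_3)$ rather than your $(u_1,u_2+\varphi(u_3),u_3)$), which of course yields the same eigenvalue set $\{0,\lambda_1\}$.
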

\begin{proof} In fact, let $F_1(u)=(u_2+\varphi(u_3),u_1,u_3)=(v_1,v_2,v_3)$ be
a local biholomorphism. Therefore, the vector field
$Y_1=\big(F_1\big)_*X_{1}$ can be expressed as follows
\begin{equation}\label{champ8}
Y_{1}=\dps v_1(v_2r_{10}+ R_1)\frac{\partial}{\partial v_1} +
(v_1r_{20} + v_2r_{21}+R_2)\frac{\partial}{\partial v_2} + v_1(r_{30}+ v_2r_{31}+R_3)\frac{\partial}{\partial
v_3} \end{equation} where $r_{ij}=r_{ij}(v_3)$ and $\mbox{m}_{\W^{(1)}_1}({R_i})\ge2$.
But,
$r_{10}=p_{11}$ and $r_{21}= p_{21}+p_{11}\varphi(v_3)=p_{21}+p_{10}=\lambda_1$.
Consequently, we have

$$\mathbf{A}_{Y_1}|_{\W_1^{(1)}} = \begin{pmatrix} 0& 0 \cr

                                   r_{20}& \lambda_1\end{pmatrix}.$$
 
Therefore, the matrix $\mathbf{A}_{Y_1}|_{\W_1^{(1)}}$ has only one eigenvalue not identically zero
$\lambda_1$. In order to analyze the other curve $\W_2^{(1)}$ it is sufficient to consider the local
biholomorphism
$F_2(u)=(u_1,u_2-\frac{p_{21}}{p_{11}}, u_3)=(v_1,v_2,v_3)$ defined
for $p_{11}\ne0$. As before, let $Z_1=\big(F_2\big)_*X_{1}$,
i.e.,
\begin{equation}\label{champ9}
Z_1=\dps v_1(s_{10}+v_2s_{21}+S_1)\frac{\partial}{\partial v_1} +
(v_1s_{20}+v_2s_{21}+S_2)\frac{\partial}{\partial v_2} + v_1(s_{30}+v_2s_{31}+S_3)\frac{\partial}{\partial
v_3} \end{equation} where $s_{ij}=s_{ij}(v_3)$ and $\mbox{m}_{\W^{(2)}_1}({S_i})\ge2$.
 But, $s_{10}=-s_{21}= \lambda_1(v_3)$. Therefore, we get
$$\mathbf{A}_{Z_1}|_{\W_2^{(1)}} = \begin{pmatrix} \lambda_1& 0 \cr
 
                                   s_{20}& -\lambda_1\end{pmatrix}$$

\noindent i.e.; the matrix $\mathbf{A}_{Z_1}|_{\W_2^{(1)}}$ has two distinct eigenvalues denoted by $\lambda_{12}^{(1)}=\lambda_1(v_3)$
and $\lambda_{22}^{(1)}=-\lambda_1(v_3)$ with  
$\lambda_{12}^{(1)}/\lambda_{22}^{(1)}=-1\not\in Q_+$ for almost all
points of $\W_2^{(1)}$.
\end{proof}

Finally, we will consider the third case where $\lambda_1$ and $ \lambda_2$ are identically null, i.e., 
$tr(\mathbf{A}_{X_0}|_{\W_0})=det((\mathbf{A}_{X_0}|_{\W_0})=0$.
Again, we have two distinct situations to consider
$p_{10}=0$ or not. If
$p_{10}=0$ then $p_{21}=0$ and
$p_{11}p_{20}=0$. Let us only consider the case
where $p_{11}=0$ because the other case is analogous. Therefore, in (\ref{champ1}), 
the multiplicity $\mbox{m}_{\W_0}(P_1)=m_1\ge 2$ and 
\begin{equation}\label{matr1}
\mathbf{A}_{X_0}|_{\W_0} = \begin{pmatrix} 0& 0 \cr
                                   p_{20}& 0\end{pmatrix}
                                   \end{equation}

In $\E_1$, there are two types of curves. The first type is given by the roots of $p_{20}$, which are homeomorphic to $\mathbb{P}^1$. The second type is homeomorphic to $\W_0$.

To simplify our analysis, we will address the first type of curves in the chart $((U_1)_1,\sigma_1(u))$, while the second type of curves will be handled in the chart $((U_1)_2,\sigma_2(v))$. This allows us to have the following:
proposition
\begin{proposition}\label{propSS} If in the matrix (\ref{matr}) the coefficients  $p_{10}$, $ p_{11}$ and $ p_{21}$ are identically null and $p_{20}$ is an affine function with no common root with $p_{31}$ then there exists a blow-up sequence $\{\pi_i,\M_i,\W_i,\fol_i,\E_i\}$ such that $\W_i =(\pi_i)^{-1}(q)$, $q\in\W_{i-1}$ and $\W_i$ is a non-elementary component of $\sing(\fol_i)$ for all $i\ge 1$.
\end{proposition}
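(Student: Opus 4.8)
The plan is to run an explicit induction on the local normal form, showing that the strictly lower-triangular (nilpotent) matrix (\ref{matr1}) reproduces itself verbatim, one stage at a time, along the fibres of the successive blow-ups; at each stage the curve $\W_i$ I select will be the exceptional fibre sitting over the unique root of an affine coefficient, which is exactly one of the $\P^1$-curves described before the Proposition.

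First I would fix coordinates. Since $p_{10}\equiv p_{11}\equiv p_{21}\equiv 0$, the germ (\ref{champ1}) reads
$$X_0=P_1\frac{\partial}{\partial z_1}+\big(z_1p_{20}+P_2\big)\frac{\partial}{\partial z_2}+\big(z_1p_{30}+z_2p_{31}+P_3\big)\frac{\partial}{\partial z_3},\qquad \mbox{m}_{\W_0}(P_i)\ge 2,$$
with $p_{20}=p_{20}(z_3)$ affine. Let $a$ be its unique root and $c:=p_{20}'(a)\ne 0$ its (nonzero) slope. The hypothesis that $p_{20}$ and $p_{31}$ share no root is precisely the statement $p_{31}(a)\ne 0$, which I will use to guarantee that the next off-diagonal entry remains genuinely affine.

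Next I would blow up $\W_0$ and compute in the chart $\big((U_1)_1,\sigma_1(u)\big)$, $z=(u_1,u_1u_2,u_3)$, where $\fol_1$ is given by (\ref{ft31}). Under our hypotheses $Q_1=u_1\widetilde{Q_1}$, $Q_2=p_{20}+u_1\widetilde{Q_2}$ and $Q_3=p_{30}+u_2p_{31}+u_1\widetilde{Q_3}$, so $\E_1=\{u_1=0\}$ is invariant and the restricted field is $p_{20}(u_3)\,\partial/\partial u_2$; hence $\sing(\fol_1)\cap\E_1$ contains the whole fibre $\W_1:=\pi_1^{-1}(q)=\{u_1=0,\ u_3=a\}$, $q=(0,0,a)$, the curve homeomorphic to $\P^1$ produced by the root of $p_{20}$. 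Setting $x_1=u_1,\ x_2=u_3-a,\ x_3=u_2$, so that $\W_1=\{x_1=x_2=0\}$ has tangential coordinate $x_3$, a direct substitution into (\ref{ft31}) shows that the first normal component keeps multiplicity $\ge 2$, that the linear part (in the normal variables) of the second normal component is $x_1\big(p_{30}(a)+x_3p_{31}(a)\big)$, and that the tangential component has linear part $c\,x_2$. Consequently
$$\mathbf{A}_{X_1}|_{\W_1}=\begin{pmatrix} 0 & 0\\[2pt] p_{30}(a)+x_3p_{31}(a) & 0\end{pmatrix},$$
which is nilpotent, so both eigenvalues vanish identically and, by Definition \ref{def2}, $\W_1$ is a non-elementary component of $\sing(\fol_1)$.

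Finally I would close the induction by verifying that $(\fol_1,\W_1)$ again satisfies the hypotheses of the Proposition. In the variables $(x_1,x_2,x_3)$ the field $X_1$ has exactly the shape (\ref{champ1}) with updated coefficients $\widetilde{p}_{20}(x_3)=p_{30}(a)+p_{31}(a)\,x_3$ and $\widetilde{p}_{31}\equiv c$, while $\widetilde{p}_{10}\equiv\widetilde{p}_{11}\equiv\widetilde{p}_{21}\equiv 0$; here $\widetilde{p}_{20}$ is genuinely affine precisely because $p_{31}(a)\ne 0$, and $\widetilde{p}_{31}$ is a nonzero constant, so $\widetilde{p}_{20}$ and $\widetilde{p}_{31}$ trivially have no common root. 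Applying the same computation to $\W_1$ then yields $\W_2=\pi_2^{-1}(q')\cong\P^1$ over the root $q'$ of $\widetilde{p}_{20}$, again non-elementary and again in the same normal form, and from the first step onward the role of ``$p_{31}$'' is always played by a nonzero constant, so the no-common-root condition holds automatically forever. Iterating produces the blow-up sequence $\{\pi_i,\M_i,\W_i,\fol_i,\E_i\}$ with $\W_i=\pi_i^{-1}(q)$, $q\in\W_{i-1}$, and $\W_i$ non-elementary for all $i\ge 1$. The main obstacle is not any single computation but checking that this degenerate configuration is genuinely self-reproducing: one must track how the datum $p_{20}$ migrates into the next matrix through the coefficients $(p_{30},p_{31})$ of the tangential direction and confirm that the no-common-root hypothesis forces the new off-diagonal entry to stay a degree-one polynomial. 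Were it to collapse to a constant or to vanish, the following blow-up would either resolve the singularity or fail to create a full singular $\P^1$-fibre, thereby breaking the chain; ensuring this never happens is the heart of the argument.
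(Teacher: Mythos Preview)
Your proposal is correct and follows essentially the same approach as the paper's proof: both compute in the chart $\big((U_1)_1,\sigma_1(u)\big)$, take $\W_1$ to be the exceptional fibre over the root of $p_{20}$, swap the roles of $u_2$ and $u_3$ (with a translation by the root), and verify that the resulting local data reproduce the hypotheses with new coefficients $\widetilde{p}_{20}(x_3)=p_{30}(a)+p_{31}(a)\,x_3$ and $\widetilde{p}_{31}\equiv p_{20}'(a)\ne 0$. The paper normalises $a=0$ by writing $p_{20}(u_3)=\alpha u_3$, but otherwise the computations, the nilpotent matrix $\begin{pmatrix}0&0\\ r_{20}&0\end{pmatrix}$, and the self-reproducing induction are identical.
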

\begin{proof}
In the chart $((U_1)_1,\sigma_1(u))$, the foliation $\fol_1$ is described by the following vector field
\begin{equation}\label{champ2}
X_{1}=\dps u_1^{m_1}Q_1(u)\frac{\partial}{\partial u_1} +
\big(p_{20}+u_1{Q_2}\big)\frac{\partial}{\partial u_2} +u_1\big(p_{30}+u_2p_{31}+u_1Q_3\big)\frac{\partial}{\partial
u_3}
\end{equation}
where $p_{ij}=p_{ij}(u_3)$ and for certain functions $Q_i$. Let $\W_1$ be the curve defined by $u_1 = u_3 -\beta=0$ where $\beta$ is the root of $p_{20}$. Therefore,  $\W_1=(\pi_1)^{-1}(0,0,\beta)$.  Without loss of generality, we can assume that $p_{20}(u_3)=\alpha u_3$ with $\alpha\ne0$, that is, $\beta=0$.   In these coordinates $(w_1,w_2,w_3)=(u_1,u_3,u_2)$ the vector field $X_1$ in (\ref{champ2}) is rewritten as follows 
\begin{align}\label{champ3}
Y_1=&\dps w_1^{m_1}R_1(w)\frac{\partial}{\partial w_1} +
w_1\big(p_{30}(w_2)+w_3p_{31}(w_2)+w_1R_2\big)\frac{\partial}{\partial w_2}\cr
        & +\big(\alpha w_2 + w_1R_3\big)\frac{\partial}{\partial
w_3}
\end{align}

But, the second section of $Y_1$ in (\ref{champ3}) is
\begin{align}\label{for200}
w_1\big(p_{30}(w_2)+w_3p_{31}(w_2)+ w_1R_2\big)&=w_1\big(p_{30}(0)+w_3p_{31}(0)\big) +\widetilde{R}_2\cr
                                                                           &= w_1r_{20}(w_3) + \widetilde{R}_2
\end{align}
where $r_{20}(w_3)= p_{30}(0)+w_3p_{31}(0)$ and $\mbox{m}_{\W_1}(\widetilde{R}_2)\ge2$. Furthermore, $p_{31}(0)\ne0$
by hypothesis. So, 
\begin{equation}\label{matr3}
\mathbf{A}_{Y_1}|_{\W_1} = \begin{pmatrix} 0& 0 \cr
                                   r_{20}& 0\end{pmatrix}
                                   \end{equation}
where $r_{20}$ is also an affine function. In this way, the third section of $Y_1$ is 
\begin{align}\label{for201}
\alpha w_2 + w_1R_3 &= w_1R_3(0,0,w_3) +\alpha w_2 + \widetilde{R_3}\cr
                                    & =w_1r_{30} + w_2r_{31} + \widetilde{R_3}
\end{align}
where $r_{30}(w_3)=R_3(0,0,w_3)$,
$r_{31}(w_3)\equiv\alpha\ne0$ and $\mbox{m}_{\W_1}(\widetilde{R_3})\ge2$.
Consequently, the vector field $Y_1$ possesses the same properties as $X_1$ because $r_{20}$ is an affine function without any common root with $r_{31}(w_3)=\alpha\ne 0$. Hence, we can continue the blow-up process indefinitely, leading to $\W_2=(\pi_2)^{-1}(0,0,\beta_1)$, where $\beta_1=-r_{30}(0)/r_{31}(0)$, and so forth.
\end{proof}
\begin{corollary}
Let us assume in the matrix (\ref{matr}) that $p_{10}$, $p_{11}$  and $p_{21}$ and identically null and $p_{20}(z_3)=z_3g(z_3)$.
If $g(0)$ and $p_{31}(0)$ are nonzero then there exists a blow-up sequence $\{\pi_i,\M_i,\W_i,\fol_i,\E_i\}$ such that $\W_i =(\pi_i)^{-1}(q)$, $q\in\W_{i-1}$ and $\W_i$ is a non-elementary component of $\sing(\fol_i)$ for all $i\ge 1$.
\end{corollary}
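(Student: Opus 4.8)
The plan is to reduce the corollary to Proposition~\ref{propSS} by performing a single preliminary blow-up that brings the foliation into the exact normal form treated there. Under the hypotheses the matrix $\mathbf{A}_{X_0}|_{\W_0}$ of (\ref{matr}) equals $\begin{pmatrix}0&0\\ p_{20}&0\end{pmatrix}$ with $p_{20}(z_3)=z_3g(z_3)$, and $g(0)\neq0$ says precisely that $z_3=0$ is a \emph{simple} root of $p_{20}$; let $q\in\W_0$ be the corresponding point. Thus $\fol_0$ is described near $q$ by a vector field as in (\ref{champ1}) with $p_{10}\equiv p_{11}\equiv p_{21}\equiv0$, so I would blow up $\M_0$ along $\W_0$ and localize the analysis at $q$.

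First I would examine $\fol_1$ in the chart $\big((U_1)_1,\sigma_1(u)\big)$, where it is given by (\ref{champ2}). On $\E_1$ the singular locus is cut out by $p_{20}(u_3)=0$, and the simple root $u_3=0$ yields the fiber curve $\W_1=(\pi_1)^{-1}(q)=\{u_1=u_3=0\}$, homeomorphic to $\P^1$. Rewriting $X_1$ in coordinates $(w_1,w_2,w_3)=(u_1,u_3,u_2)$ as a vector field $Y_1$, so that $\W_1=\{w_1=w_2=0\}$ exactly as in the proof of Proposition~\ref{propSS}, the key computation is the linear part of $Y_1$ along $\W_1$. The third component $\dot w_3=p_{20}(w_2)+w_1Q_2=w_2g(w_2)+w_1Q_2$ has $w_2$-coefficient equal to the \emph{nonzero constant} $r_{31}\equiv g(0)$, while the second component $\dot w_2=w_1\big(p_{30}(w_2)+w_3p_{31}(w_2)+w_1Q_3\big)$ produces $r_{20}(w_3)=p_{30}(0)+w_3p_{31}(0)$, an affine function of slope $p_{31}(0)\neq0$. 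Hence $\mathbf{A}_{Y_1}|_{\W_1}=\begin{pmatrix}0&0\\ r_{20}&0\end{pmatrix}$ is nilpotent, i.e. of the non-elementary type (\ref{matr3}), with $r_{20}$ affine and $r_{31}$ a nonvanishing constant; in particular $\W_1$ is already a non-elementary component of $\sing(\fol_1)$.

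At this stage $\fol_1$ along $\W_1$ satisfies verbatim the hypotheses of Proposition~\ref{propSS}: every entry of the associated matrix other than $r_{20}$ vanishes identically, $r_{20}$ is affine, and it has no common root with $r_{31}\equiv g(0)\neq0$, since a nonzero constant has no roots at all. Applying Proposition~\ref{propSS} to $\fol_1$ then produces the infinite blow-up sequence $\{\pi_i,\M_i,\W_i,\fol_i,\E_i\}_{i\ge2}$ with $\W_i=(\pi_i)^{-1}(q_{i-1})$, $q_{i-1}\in\W_{i-1}$, each $\W_i$ a non-elementary component of $\sing(\fol_i)$. Prepending the first blow-up $\pi_1$ and the curve $\W_1$ furnishes the desired sequence for all $i\ge1$.

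The main point—and the only place where the two hypotheses are genuinely used—is the single linear-algebra computation of the second step. The condition $p_{31}(0)\neq0$ forces $r_{20}$ to be a genuine degree-one affine function with a well-defined simple root, so that the process has a point at which to continue; and $g(0)\neq0$, i.e. the simplicity of the root of $p_{20}$, is exactly what makes the new coefficient $r_{31}$ equal to the nonzero constant $g(0)$ needed to reproduce the ``no common root'' hypothesis at the next stage. Were the root of $p_{20}$ not simple, $r_{31}$ would inherit a vanishing factor and the inductive mechanism of Proposition~\ref{propSS} would collapse; so I expect the crux to be verifying that the simplicity of this root propagates correctly under the blow-up, even though the computation producing $r_{20}$ and $r_{31}$ is itself short.
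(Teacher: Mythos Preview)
Your proposal is correct and follows essentially the same route as the paper: a single blow-up along $\W_0$, the coordinate switch $(w_1,w_2,w_3)=(u_1,u_3,u_2)$, the identification $r_{20}(w_3)=p_{30}(0)+w_3p_{31}(0)$ and $r_{31}\equiv g(0)$, and then an appeal to Proposition~\ref{propSS}. Your discussion of where each hypothesis is consumed matches the paper's implicit logic.
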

\begin{proof}
In the chart $((U_1)_1,\sigma_1(u))$, the foliation $\fol_1$ is described by the following vector field 
$$ X_1=u_1^{m_1}Q_1(u)\frac{\partial}{\partial u_1}+ \big(u_3g(u_3) + u_1Q_2(u)\big)\frac{\partial}{\partial u_2}+ u_1\big(p_{30}+u_2p_{31}+u_1Q_3(u)\big)\frac{\partial}{\partial u_2}$$
where $p_{ij}=p_{ij}(u_3)$ and $Q_i(0,u_2,u_3)=0$ for $i=2,3$. But, in these coordinates $(w_1,w_2,w_3)=(u_1,u_3,u_2)$ the foliation $\fol_1$ is described by vector field $Y_1$ as follows 
\begin{align*}Y_1=&\dps w_1^{m_1}R_1\frac{\partial}{\partial w_1} +
w_1\big(p_{30}(w_2)+w_3p_{31}(w_2) + w_1R_2\big)\frac{\partial}{\partial w_2}+\cr & +\big(w_2g(w_2)+ w_1R_3\big)\frac{\partial}{\partial
w_3}.
\end{align*}
But, the second section of $Y_1$ is
\begin{align}\label{for202}
w_1\big(p_{30}(w_2)+w_3p_{31}(w_2)+ w_1R_2\big) &= w_1(p_{30}(0)+w_3p_{31}(0)+\widetilde{R}_2(w)\cr
                                                                              &= w_1 r_{20}(w_3) + \widetilde{R}_2(w)
\end{align}
while its third section is
\begin{align}\label{for202a}
w_2g(w_2) + w_1R_3(w) &= w_1R_3(0,0,w_3) +w_2g(0) + \widetilde{R}_3(w)\cr
                                          &= w_1 r_{30}(w_3) + w_2r_{31}(w_3) + \widetilde{R}_2(w).
\end{align}
Therefore,  $r_{20}(w_3)=p_{30}(0)+w_3p_{31}(0)$ is an affine function with no commom root with $r_{31}(w_3)=g(0)\ne0$. Therefore, we can aplly Proposition \ref{propSS} again.

\end{proof}
\begin{proposition}\label{pro00} If in the matrix (\ref{matr}) the coefiecients  $p_{10}$, $ p_{11}$ and $ p_{21}$ are identically null and $p_{20}\not\equiv0$.  Then, for any blow-up sequence $\{\pi_i,\M_i,\W_i,\fol_i,\E_i)$ such that $\W_i$ is homeomorphic to $\W_{i-1}$ and $\pi_i(\W_{i})=\W_{i-1}$ there is a natural number $k\in\N$ such
that $\W_i$ is an elementary component of $\sing(\W_i)$ for $i\ge k$.
\end{proposition}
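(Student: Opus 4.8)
The plan is to argue by contradiction, assuming that $\W_i$ is a non-elementary component of $\sing(\fol_i)$ for \emph{every} $i$; equivalently, that the matrix $\mathbf{A}_{X_i}|_{\W_i}$ of (\ref{matr}) is nilpotent at every stage. First I would normalize the situation. Since each $\W_i$ is homeomorphic to $\W_{i-1}$ with $\pi_i(\W_i)=\W_{i-1}$, Proposition \ref{propl} gives $m_{\W_i}(\fol_i)\le 1+m_{\W_0}(\fol_0)$, and Lemma \ref{lemafp0} produces infinitely many indices at which $m_{\W_i}(\fol_i)=1$ and $\W_i$ is of type III. Passing to the tail determined by these indices, I may assume we are exactly in the normal form (\ref{champ1}): $\W_i=\{z_1=z_2=0\}$ of type III with $m_{\W_i}(\fol_i)=1$, so that $\mathbf{A}_{X_i}|_{\W_i}$ is defined. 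By the contradiction hypothesis this matrix is nilpotent, hence—after the normalization preceding Proposition \ref{propSS}—conjugate to $\begin{pmatrix}0&0\\ a_i&0\end{pmatrix}$ with $a_i=a_i(z_3)$, and the homeomorphism hypothesis excludes type-II stages, since those produce components homeomorphic to $\mathbb{P}^{n-2}$ rather than to $\W_0$ (see the case-(ii) analysis and Theorem \ref{theoremt}(a)); thus only types I and III occur.

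Next I would isolate the transformation rule for the nilpotent data along the $\W_0$-homeomorphic curve. Writing the first component (the one with vanishing linear part) as $P_1=\sum_{|b|\ge 2}z_1^{b_1}z_2^{b_2}p_{1,b}(z_3)$, a direct computation in the chart $\sigma_2(v)$ (the chart carrying the curve homeomorphic to $\W_0$, as opposed to the chart $\sigma_1(u)$ carrying the $\mathbb{P}^{1}$-type curves of Proposition \ref{propSS}) shows that the strict transform again has nilpotent linearization $\begin{pmatrix}0&0\\ a_{i+1}&0\end{pmatrix}$ along $\W_{i+1}=\{v_1=v_2=0\}$, where $a_{i+1}$ is the coefficient of $z_2^2$ in $P_1$, while the new first component acquires the multiplicity-two monomial $a_i\,v_1v_2$. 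This is precisely the mechanism by which nilpotency is forced to degenerate: the quadratic jet of the first component is consumed, so after boundedly many such steps $\mathbf{A}_{X_i}|_{\W_i}$ becomes identically zero, at which point the first two components have multiplicity $\ge 2$ and $\W_i$ leaves type III. By the exclusion above it must then be of type I, and Lemma \ref{lemafp0} drives the multiplicity back to $1$ and the type back to III at a later index.

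With the transformation rule in hand, the final step is to extract a strictly decreasing invariant in $\N$. I would combine the embedded-point count $N(\fol_i,\aa_{\W_i})$—which is non-increasing because $\pi_{i+1}$ restricts to a biholomorphism off the exceptional divisor (see the remark after Theorem \ref{theoremB})—with a secondary jet/multiplicity invariant $\delta_i$ recording the order of the first nonvanishing homogeneous part of the first component; note that once $\ell_i=0$ the formula of Theorem \ref{theoremB} forces $\mu(\fol_i,\W_i)$ to be a natural number whose remaining terms are fixed by $\chi(\W_0)$, $\deg(\W_0)$ and $k$. The key point to verify is that, as long as $\mathbf{A}_{X_i}|_{\W_i}$ remains nilpotent, each cycle of homeomorphic blow-ups strictly lowers the pair $(N(\fol_i,\aa_{\W_i}),\delta_i)$ in $\N^2$ lexicographically; since it is bounded below, this cannot persist, so at some index $k$ the matrix $\mathbf{A}_{X_k}|_{\W_k}$ acquires a non-zero eigenvalue. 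By Propositions \ref{proed} and \ref{pro1an} the presence of a non-zero eigenvalue is preserved under all subsequent blow-ups along homeomorphic curves, so $\W_i$ is elementary for every $i\ge k$, and in particular $\fol_i$ is generically log canonical along $\W_i$, as claimed.

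The hard part will be the strict-decrease statement of the last step. The transformation rule keeps the matrix nilpotent for one or two blow-ups at a time and then passes through the zero matrix together with a temporary jump of the multiplicity to $2$, so the invariant must be insensitive to these intermediate type-I stages while still recording genuine progress. Controlling how the higher-order jets of the first component feed into the linear part across a full cycle (type III $\to$ zero matrix $\to$ type I $\to$ type III), and checking that the chosen invariant really drops over each cycle rather than merely staying bounded, is the crux; it is here that the explicit normal-form computations of this section must be combined with the bound of Proposition \ref{propl} and the stabilization mechanism of Lemma \ref{lemafp0}.
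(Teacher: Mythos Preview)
Your outline has the right architecture (contradiction, pass to the type-III tail via Lemma \ref{lemafp0}, follow the nilpotent branch in the $\sigma_2$-chart, produce a decreasing invariant), but the two load-bearing steps are not actually carried out, and one of them is misstated.

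First, the ``transformation rule'' is not correct as written. After one blow-up of (\ref{champ1}) with $p_{10}\equiv p_{11}\equiv p_{21}\equiv 0$ in the chart $\sigma_2$, the curve $\W_1=\{v_1=v_2=0\}$ has $m_{\W_1}(\fol_1)\ge 2$ (see (\ref{champ34})), so $\mathbf{A}_{Y_1}|_{\W_1}=0$, not $\begin{pmatrix}0&0\\ a_1&0\end{pmatrix}$ with $a_1$ equal to the $z_2^2$-coefficient of $P_1$. The nilpotent entry does not simply shift to a higher jet coefficient; instead the whole linear part vanishes and the relevant data are the integers $m_i=m_{\W_0}(P_i)$ and the exponents $n_i$ at which the ``pure $z_2$'' parts $g_i$ first appear. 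The paper tracks these explicitly: writing $P_i=z_2^{n_i}g_i+z_1L_i$, the $k$-th strict transform in the $\sigma_2$-chart has the form (\ref{champ35}), with the residual terms $R_i^{(k)}$ carrying exponents $n_1-(2k-1)$, $m_1-k$, $n_2-k$, $n_3-k$. These are the correct decreasing invariants, and they must eventually become negative because at least one $g_i\not\equiv 0$ (otherwise $\{v_1=0\}\subset\sing(\fol_1)$). Your $\delta_i$ is a gesture in this direction, but you have not identified which quantity actually drops, and the rule you wrote down would not produce it.

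Second, the lexicographic pair $(N(\fol_i,\aa_{\W_i}),\delta_i)$ does not do the job. The embedded-point count $N$ is only non-increasing, and nothing in your argument forces it to drop over a cycle; it can sit constant forever while the nilpotent branch persists. So the entire burden falls on $\delta_i$, which you leave undefined beyond ``order of the first nonvanishing homogeneous part of the first component''. That order can jump up when you pass through the zero-matrix stage, exactly the phenomenon you flag in your last paragraph. The paper avoids this by computing the normal form (\ref{champ33})--(\ref{champ35}) once and for all and reading off both the elementary eigenvalues $-(k-1)p_{20}$, $kp_{20}$ on the $\sigma_1$-side and the strict drop of the exponents on the $\sigma_2$-side; the intermediate case analysis (the various sub-cases on $p_{31}$, $m_{\W_1}(R_i)$, $\Delta_1$, etc.) is there precisely to close off the short branches where the sequence becomes elementary in two or three steps. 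Your proposal defers all of this to ``the hard part'', which is the proof.
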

\begin{proof}
The vector field (\ref{champ1}) can be rewritten as follows
\begin{equation}
\label{champ13}
X_0=\dps P_1(z)\frac{\partial}{\partial z_1} +
\big(z_1p_{20}+P_2(z)\big)\frac{\partial}{\partial z_2} +\big(z_1p_{30}+z_2p_{31}+P_3(z)\big)\frac{\partial}{\partial
z_3}
\end{equation}
where $p_{ij}=p_{ij}(z_3)$ and
$$P_i(z)=\dps\sum_{j=0}^{m_i}z_1^{m_i-j}z_2^jP_{ij}(z)=z_2^{n_i}g_i(z)+z_1L_i(z),\quad m_i=\mbox{m}_{\W_0}(P_i)\ge 2$$
with $n_i\ge m_i$,  either $g_i(0,0,z_3):=p_i(z_3)\not\equiv0$ or $g_i\equiv0$, $\mbox{m}_{\W_0}(L_i)\ge m_i-1$ with
$L_i(0,1,z_3):=q_i(z_3)$ for all $i$. In addition, if $\mbox{m}_{\W_0}(L_i)\ge m_i$ then $n_i=m_i$ and $p_i\not\equiv0$.

From (\ref{champ13}), in the chart $((U_1)_2,\sigma_2(v))$ 
the foliation $\fol_1$ is described by the following vector field
\begin{equation}\label{champ34}
Y_{1}=\dps\big(-v_1^2p_{20}+R_1-v_1R_2\big)\frac{\partial}{\partial v_1} +
v_2\big(v_1p_{20}+R_2\big)\frac{\partial}{\partial v_2} +v_2\big(v_1p_{30}+p_{31}+R_3\big)\frac{\partial}{\partial
v_3}
\end{equation}
where $p_{ij}=p_{ij}(v_3)$, $R_i=v_2^{n_i-1}g_i^{(1)}(v)+v_1v_2^{m_i-1}L_i^{(1)}(v)$ with $g_i^{(1)}(0,0,v_3)=p_i(v_3)$ and $ L_i^{(1)}(0,0,v_3)=q_i(v_3)$. 

In this chart, the exceptional divisor $\E_1$ is defined by the equation $\{v_2=0\}$ 
and the non-elementary curve
$\W_1=\{v\in(U_1)_2|v_1=v_2=0\}\subset\sing(\fol_1)$ has multiplicity equal to 2. Besides that
$\mbox{m}_{\E_1}(\pi_1^*\fol_0)=0$. From this point onward, let us focus solely on the fibers $\pi_1^{-1}(0,0,z_3)$ for which $p_{20}(z_3)\ne0$. This is because the curves associated with these fibers have already been taken into account in Proposition \ref{propSS}.

To continue with our analysis, we need to make some considerations about the possible values of $p_{31}$. Specifically, if $p_{31}\equiv0$, then in (\ref{champ13}), there must be a function $g_i$ that is not identically zero for some $i$. Otherwise, the hypersurface $v_1=0$ would be entirely contained in the singular set of $X_1$.

Hence, let us consider the case where $\mbox{m}_{\W_1}(\fol_1)=1$ which results in $p_{31}\not\equiv0$ or $\mbox{m}_{\W_1}(R_1)=1$.

Therefore, if $p_{31}\not\equiv0$ and $\mbox{m}_{\W_1}(R_1)\ge2$ then $\W_1$ is of type I.  In this situation, the singular set of $\fol_2$
restricted to the exceptional divisor consists of the elementary
curve $\W_2=\{x\in(U_2)_1|x_1=x_2=0\}$, with $\sigma_1(x)=v$,  and some
closed points. The eigenvalues of $\fol_2$ at $\W_2$ are
$\lambda_{11}^{(2)}=-p_{20}(x_3)$ and
$\lambda_{21}^{(2)}=2p_{20}(x_3)$, i.e.
$\lambda_{11}^{(2)}/\lambda_{21}^{(2)}=-1/2\not\in Q_+$ for almost
all $x\in\W_2$. More precisely, $\lambda_{11}^{(2)}/\lambda_{21}^{(2)}=-1/2\not\in Q_+$ for all $x_3$ such that $p_{20}(x_3)\ne0$.

Now, if $\mbox{m}_{\W_1}(R_1)=1$, then it must be the case that $m_1=n_1=2$ and $p_1\not\equiv0$, resulting in $\W_1$ being of type III. Consequently, the singular set of $\fol_2$, when restricted to $\E_2$, contains a curve that is homeomorphic to $\W_1$, but with multiplicity equal to 2. In fact, in the chart $\big((U_2)_1,\sigma_1(t)\big)$, $\fol_2$ can be described by the following vector field:
\begin{align}\label{for200a}
X_{2}=&\dps t_1\bigg(-t_1p_{20}+R_1^{(2)}-t_1R_2^{(2)}\bigg)\frac{\partial}{\partial t_1} +
t_2\bigg(2t_1p_{20}+R_1^{(2)}+2t_1R_2^{(2)}\bigg)\frac{\partial}{\partial t_2}+\cr 
&+t_1t_2\bigg(t_1p_{30}+p_{31}+R_3^{(2)}\bigg)\frac{\partial}{\partial
t_3}
\end{align}
where
$$ R_i^{(2)}(t)=(t_1t_2)^{m_i-1}L_i^{(2)}(t)+t_1^{n_i-2}t_2^{n_i-1}g_i^{(2)}(t),\quad g_i^{(2)}(0,0,t_3)=p_i(t_3).$$
Thus, $\W_2=\{t\in(U_2)_1|t_1=t_2=0\}\subset\sing(\fol_2)$.  But, in this situation, the singular set of $\fol_3$ contains three elementary curves homemorphic to $\W_2$. In fact, on the chart $\big((U_3)_1,\sigma_1(x)=t\big)$, there are two curves $\W_1^{(3)}=\{t\in(U_3)_1|x_1=x_2=0\}$ and $\W_2^{(3)}=\{t\in(U_3)_1|x_1=x_2-\frac{3p_{20}}{2p_1}=0\}$ while on the other chart $\big((U_3)_1,\sigma_2(y)=t\big)$, there is the third curve $\W_3^{(3)}=\{y\in(U_3)_2|y_1=y_2=0\}$. The eigenvalues of $\fol_3$ along $\W_1^{(3)}$ are $\lambda_{11}^{(3)}=-p_{20}(x_3)$ and $\lambda_{12}^{(3)}=3p_{20}(x_3)$, along  $\W_2^{(3)}$ are   $\lambda_{21}^{(3)}=\frac{p_{20}(x_3)}{2}$ and $\lambda_{22}^{(3)}=-3p_{20}(x_3)$, at $\W_3^{(3)}$ are $\lambda_{31}^{(3)}=2p_1(x_3)-2p_{20}(x_3)$ and $\lambda_{32}^{(3)}=2p_{20}(x_3)-p_1(x_3)$. It is worth noting that the case $\lambda_{31}^{(3)}\equiv\lambda_{32}^{(3)}\equiv0$ is not possible.

Henceforth, we will exclusively examine the situation where $\mbox{m}_{\W_1}(\fol_1)\ge2$, which implies that $p_{31}\equiv0$ and $\mbox{m}_{\W_1}(R_1)\ge2$, leading to $\W_1$ being of type III.
Furthermore, if $\mbox{m}_{\W_1}(R_1)=2$ then $m_1=2$ or $n_1=3$ while if $\mbox{m}_{\W_1}(R_2)=1$ then $m_2=n_2=2$ and $p_2\not\equiv0$. Thus, in the chart $\big((U_2)_1,v=\sigma_1(x)\big)$, the foliation $\fol_2$ is described by the following vector field
\begin{align}\label{for201a}
X_{2}=&\dps x_1\big(-p_{20}+R_1^{(2)}-R_2^{(2)}\big)\frac{\partial}{\partial x_1} +
x_2\big(2p_{20}-R_1^{(2)}+2R_2^{(2)}\big)\frac{\partial}{\partial x_2}\cr
& +x_1x_2\big(p_{30}+R_3^{(2)}\big)\frac{\partial}{\partial
x_3}
\end{align}
where 
\begin{align*}R_1^{(2)}(t)=&x_1^{m_1-2}x_2^{m_1-1}L_1^{(2)}(x)+x_1^{n_1-3}x_2^{n_1-1}g_1^{(2)}(x),\cr
R_i^{(2)}(x)=&x_1^{m_i-1}x_2^{m_i-1}L_i^{(2)}(x)+x_1^{n_i-2}x_2^{n_i-1}g_i^{(2)}(x),\quad \mbox{for }i=2,3
\end{align*}
with $L_i^{(2)}(0,0,x_3)=q_i(x_3)$ and $ g_i^{(2)}(0,0,x_3)=p_i(x_3)$.

Here, we assume that $p_1\equiv0$ when $n_3\ge4$, and similarly $p_2\equiv q_2\equiv0$ when $\mbox{m}_{\W_1}(R_2)\ge2$.
 Thus, the singular set of $\fol_2$ is defined as follows
$$x_1 = x_2\bigg( 2p_{20}+(2p_2-q_1)x_2 -p_1x_2^2\bigg)=0.$$ Thus, if $p_1\not\equiv0$ and $\Delta_1= (2p_2-q_1)^2+8p_{20}p_1 \not\equiv0$ then we have 3 curves to consider:
$\W_1^{(2)}=\{x\in(U_2)_1| x_1=x_2=0\}$, $\W_2^{(2)}=\{x\in(U_2)_1| x_1=x_2-\psi_1(x_3)=0\}$ and $\W_3^{(2)}=\{x\in(U_2)_1| x_1=x_2-\psi_2(x_3)=0\}$ where
$$ \psi_i(x_3) = \frac{2p_2(x_3)-q_1(x_3) +(-1)^i\sqrt{\Delta_1}}{2p_1(x_3)}.$$
The eigenvalues of $\fol_2$ at $\W_1^{(2)}$ are $\lambda_{11}^{(2)}=-p_{20}$ and $\lambda_{12}^{(2)}=2p_{20}$, at $\W_2^{(2)}$ are $\lambda_{21}^{(2)}=p_{20}+\psi_1p_2$ and $\lambda_{22}^{(2)}=\psi_1\sqrt{\Delta_1}$, 
at $\W_3^{(2)}$ are $\lambda_{31}^{(2)}=p_{20}+\psi_2p_2$ and $\lambda_{32}^{(2)}=-\psi_2\sqrt{\Delta_1}$.

If $\Delta_1\equiv0$ and $p_1\not\equiv0$ then there are 2 homeomorphic curves to $\W_1$ in the singular set of $\fol_2$, namely $\W_1^{(2)}=\{x\in(U_2)_1| x_1=x_2=0\}$ and $\W_2^{(2)}=\{x\in(U_2)_1| x_1=x_2-\psi_1(x_3)=0\}$, but this last one has multiplicity equal to 2. The eigenvalues of $\fol_2$ are $\lambda_{21}^{(2)}=p_{20}+\psi_1p_2$ and $\lambda_{22}^{(2)}\equiv0$.

If $p_1\equiv0$ and $2q_2-q_1\not\equiv0$ then there are 3 elementary curves contained in $\E_2$, that is, $\W_1^{(2)}=\{x\in(U_2)_1| x_1=x_2=0\}$, $\W_2^{(2)}=\{x\in(U_2)_1| x_1=x_2-\frac{2p_{20}}{q_1-2p_2}=0\}$ and $\W_3^{(2)}=\{y\in(U_2)_1| y_1=y_2=\}$ where $\sigma_2(y)=v$. The eigenvalues of $\fol_2$ along $\W_2^{(2)}$ are $\lambda_{21}^{(2)}=\frac{q_1p_{20}}{q_1-2p_2}$ and $\lambda_{22}^{(2)}=-2p_{20}$, along $\W_3^{(2)}$ are $\lambda_{31}^{(2)}=q_1-2p_2$ and $\lambda_{32}^{(2)}=p_2$.

Now, if $p_1\equiv0$ and $2q_2-q_1\equiv0$ then there are 2 elementary curves contained in $\E_2$, the elementary curve $\W_1^{(2)}=\{x\in(U_2)_1| x_1=x_2=0\}$ and $\W_2^{(2)}=\{y\in(U_2)_1| y_1=y_2=0\}$, with multiplicity equal to 2.  The eigenvalues of $\fol_2$ along  $\W_2^{(2)}$ are $\lambda_{21}^{(2)}\equiv0$ and $\lambda_{22}^{(2)}=q_2$. Notice if $q_2\equiv0$ then $q_1\equiv0$ resulting in $\mbox{m}_{\W_1}(R_1) >3$ and $\mbox{m}_{\W_1}(R_2) >2$ which is absurd.

Now, we will consider $\mbox{m}_{\W_1}(R_1) =3$, $\mbox{m}_{\W_1}(R_2) =2$, and $\mbox{m}_{\W_1}(R_3) =1$. Hence, $m_3=n_3=2$ and $p_3\not\equiv0$. Therefore, the singular set of $\fol_2$ has 2 curves homemorphic to $W_1$, namely, $\W_1^{(2)}=\{x\in(U_2)_1| x_1=x_2=0\}$ as before and $\W_2^{(2)}=\{y\in(U_2)_2| y_1=y_2=0\}$ with multilplicity equal to 2. But this curve  $\W_2^{(2)}$ is non-elementary and of type I. Thus, let $\W_2=\W_2^{(2)}$. The singular set of $\fol_3$  contains the elementary curve $\W_1^{(3)}=\{t\in(U_3)_1| t_1=t_2=0\}$ and some isolated closed points. The eigenvalues of $\fol_3$ along $\W_1^{(3)}$ are $\lambda_{11}^{(3)}=-2p_{20}$ and $\lambda_{12}^{(3)}=3p_{20}$.

Finally, if a blow-up sequence $\{\pi_i,\M_i,\W_i,\mathcal{F}_i,\E_i\}$ exists such that $\W_i$ is homeomorphic to $\W_{i-1}$ and $\pi_i(\W_i)=\W_{i-1}$, with $\W_i\subset\E_i$ being a non-elementary curve of $\text{Sing}(\mathcal{F}_i)$ for all $i$, then, based on our previous observations, the only possibility is $\text{m}_{\W_1}(R_1) \ge 3$, $\text{m}_{\W_1}(R_2) \ge 2$, and $\text{m}_{\W_1}(R_3) \ge 2$, and these conditions must remain invariant under subsequent blow-ups. More precisely, foliation $\fol_2$, in chart $\big((U_2)_2,\sigma_2(y)\big)$, is described by the following vector field.

\begin{align}\label{for913}
Y_2 &= \big( -2y_1^2p_{20}+ R_1^{(2)}-2y_1R_2^{(2)}\big)\frac{\partial}{\partial y_1}+y_2 \big( y_1p_{20}+ R_2^{(2)}\big)\frac{\partial}{\partial y_2}+\cr &+y_2 \big( y_1p_{30}+ R_3^{(2)}\big)\frac{\partial}{\partial y_3}
\end{align}
where 
$$ R_1^{(2)}=y_1y_2^{m_1-2}L_1^{(2)}(y)+y_2^{n_1-3}g_1^{(2)}(y), \quad R_i^{(2)}=y_1y_2^{m_i-1}L_i^{(2)}(y)+y_2^{n_i-2}g_i^{(2)}(y)$$
for $i=2,3$, with $L_i^{(2)}(y)=L_i^{(1)}\circ \sigma_2(y)$ and $g_i^{(2)}(y)=g_i^{(1)}\circ \sigma_2(y)$.

Thus, $\W_2=\{y\in (U_2)_2| y_1=y_2=0\}$ is a non-elementary and $\mbox{m}_{\W_2}(\fol_2) = 2$. In order for such a sequence to exist we must have $\mbox{m}_{\W_2}(R_1^{(2)}) \ge3$, $\mbox{m}_{\W_2}(R_2^{(2)}) \ge 2$ and $\mbox{m}_{\W_2}(R_3^{(2)}) \ge2$ and so on.  Therefore, let $\W_k$ be the curve defined in the chart $\big((U)_k,\sigma_2(y^{(k)})\big)$ as follows $\{ y^{(k)}\in(U_k)_2| y_1^{(k)}= y_2^{(k)}=0\}$, with $\sigma_2(y^{(k)})=y^{(k-1)}$ for $k\ge1$. By finite induction,
the foliation $\fol_k$, in the chart $\big((U_k)_1,\sigma_1(x)=y^{(k-1)})$, is described by the vector field
\begin{flalign}
\label{champ33}
 X_k&=\dps x_1\bigg(-(k-1)(p_{20}+S_2^{(k)})+ S_1^{(k)}\bigg)\frac{\partial}{\partial x_1} +
x_2\bigg(k(p_{20}+S_2^{(k)}) -S_1^{(k)}\bigg)\frac{\partial}{\partial x_2^{(k)}}+\cr
&+x_1x_2\bigg( p_{30} + S_3^{(k)}\bigg)\frac{\partial}{\partial x_3^{(k)}}
\end{flalign}
where 
\begin{flalign*}
S_1^{(k)}(x)&=x_1^{\alpha_k}x_2^{\alpha_{k-1}}\tilde{g}_1^{(k)}(x)+x_1^{m_1-k}(x_2)^{m_1-k+1}\tilde{L}_1^{(k)}(x),\quad \alpha_k=n_1-(2k-1)\cr
S_i^{(k)}(x)&=x_1^{n_i-k}x_2^{n_i-k+1}\tilde{g}_i^{(k)}(x)+(x_1x_2)^{m_i-1}\tilde{L}_i^{(k)}(x),\mbox{ for }i=2,3
\end{flalign*}
with $\tilde{L}_i^{(k)}(y)=L_i^{(k-1)}(\sigma_1(x))$ and $\tilde{g}_i^{(k)}(y)=g_i^{(k-1)}(\sigma_1(x))$, 
while in the chart $\big((U_k)_2,\sigma_2(y)=y^{(k-1)})\big)$, by the vector field
\begin{flalign}
\label{champ35} Y_k&=\dps \bigg(-ky_1^2p_{20}+R_1^{(k)}-ky_1R_2^{(k)}\bigg)\frac{\partial}{\partial y_1} +
y_2\bigg(y_1p_{20} + R_2^{(k)}\bigg)\frac{\partial}{\partial y_2}+\cr
&+y_2\bigg(y_1p_{30}+R_3^{(k)}\bigg)\frac{\partial}{\partial y_3}
\end{flalign}
where 
\begin{flalign*}
R_1^{(k)}(y)&=y_2^{\alpha_k}g_1^{(k)}(y)+y_1y_2^{m_1-k}L_1^{(k)}(y),\cr
R_i^{(k)}(y)&=y_2^{n_i-k}g_2^{(k)}(y)+y_1y_2^{m_i-1}L_2^{(k)}(y),\mbox{ for }i=2,3
\end{flalign*}
with $L_i^{(k)}(y)=L_i^{(k-1)}\circ\sigma_2(y)$ and $g_i^{(k)}(y)=g_i^{(k-1)}\circ \sigma_2(y)$.
Furthermore, $g_i^{(k)}(0,0,x_3)=p_i(x_3)$ and $L_i^{(k)}(0,0,x_3)=q_{i}(x_3)$ for all $i$. 

Since $\mbox{m}_{\W_{k-1}}(R_1^{(k-1)}) \ge3$, $\mbox{m}_{\W_{k-1}}(R_2^{(k-1)}) \ge 2$ and $\mbox{m}_{\W_{k-1}}(R_3^{(k-1)}) \ge2$, $\fol_k$ is well defined as well as
$\alpha_k,n_2-k,n_3-k,m_1-k\ge0$ with
$\mbox{m}_{\E_i}(\pi_i^*\fol_{i-1})=1$ for $i=1,\ldots,k$.  By Theorem \ref{theoremt} there are three curves $\W_j^{(k)}\subset\sing(\fol_k)$, counting the multiplicities, since $m_{\W_k}(\fol_{k})=2$. It is not
difficult to see that the curve
$\W_1^{(k)}=\{x^{(k)}\in(U_k)_1|x_1^{(k)}=x_2^{(k)}=0\}$ is an elementary component of
$\sing(\fol_k)$ since $\lambda_{11}^{(k)}=-(k-1)p_{20}(x_3)$ and
$\lambda_{21}^{(k)}=kp_{20}(x_3)$, i.e.;
$\lambda_{11}^{(k)}/\lambda_{21}^{(k)}=-(k-1)/k\not\in Q_+, k\ge2$  for almost
all $x^{(k)}\in\W_1^{(k)}$. Notice that $S_i^{(k)}|_{\E_k}\equiv0$ for all $i$ resulting in $\W_1^{(k)}$ is the unique homeomorphic curve to $\W_{k-1}$ contained in this chart.

However, considering that at least one of $p_i\not\equiv0$, it follows that at least one of the three inequalities $\text{m}_{\W_k}(R_1^{(k)}) \le \text{min}\{ m_1-k+1,\alpha_k\}$, $\text{m}{\W_{k}}(R_2^{(k)}) \le n_2-k$, or $\text{m}_{ \{W_{k}}(R_3^{(k)}) \le n_3-k$ holds true for all $k >0$. Consequently, it is impossible for such a sequence to continue indefinitely. This assumption is consistent since it would contradict Lemma \ref{lemafp0} otherwise.  Therefore, for some $k>0$, the analysis of Equation (\ref{champ35}) follows a pattern similar to that of Equation (\ref{champ34}), with the substitution of $p_{20}$ and $p_2$ by $kp_{20}$ and $kp_2$ in $R_1$, respectively. It is enough to compare Equations (\ref{champ34}) and (\ref{champ35}). Thus, the study is reduced to one of the cases that had previously been examined.
\end{proof}

\begin{proposition}\label{pro01}
Let us consider the foliation $\fol_0$ described by the vector field (\ref{champ1}) with  $p_{10}\not\equiv0$ and
the all eigenvalues of (\ref{matr}) are identically null. Then, for any blow-up sequence Let $\{\pi_i, \M_i,\W_i,\fol_i,\E_i\}$ be such that $\W_i$ is homeomorphic to $\W_{i-1}$ and $\pi_i(\W_i)=\W_{i-1}$  there is a natural $k\in\N$ such that $\W_i$ are elementary components of $\fol_i$ for $i\ge k$.
\end{proposition}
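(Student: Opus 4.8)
The plan is to reduce this case to Proposition \ref{pro00} by a linear change of coordinates that brings the matrix $\mathbf{A}_{X_0}|_{\W_0}$ into its Jordan normal form. First I would record what the hypotheses force on (\ref{matr}): since both eigenvalues vanish identically we have $\mathrm{tr}(\mathbf{A}_{X_0}|_{\W_0})=p_{10}+p_{21}\equiv0$ and $\det(\mathbf{A}_{X_0}|_{\W_0})=-p_{10}^2-p_{11}p_{20}\equiv0$, so that $p_{21}\equiv-p_{10}$ and $p_{11}p_{20}\equiv-p_{10}^2$. Because $p_{10}\not\equiv0$, the matrix is a nonzero nilpotent matrix and neither $p_{11}$ nor $p_{20}$ is identically zero; in particular $\mathrm{m}_{\W_0}(\fol_0)=1$, so the relevant blow-up sequences are exactly those with curves $\W_i$ homeomorphic to $\W_{i-1}$.

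Next I would construct the conjugating biholomorphism. A direct computation shows that $v=(-p_{11},p_{10})$ spans both the kernel and the image of $\mathbf{A}_{X_0}|_{\W_0}$, and that $\mathbf{A}_{X_0}|_{\W_0}\,e_1=(p_{20}/p_{10})\,v$. Hence the $z_3$-dependent linear change of coordinates $z_1=w_1-p_{11}(w_3)\,w_2$, $z_2=p_{10}(w_3)\,w_2$, $z_3=w_3$ --- which is a biholomorphism wherever $p_{10}\neq0$ and which preserves $\W_0=\{w_1=w_2=0\}$ --- conjugates $\mathbf{A}_{X_0}|_{\W_0}$ to $\begin{pmatrix}0&0\\ p_{20}/p_{10}&0\end{pmatrix}$. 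By the Proposition asserting that Definitions \ref{defm} and \ref{def2} are independent of the chosen coordinate system, the restricted linear matrix transforms exactly by this conjugation (the $z_3$-derivative terms produced by the change of coordinates carry a factor vanishing along $\W_0$, since the $\partial/\partial z_3$-component of $X_0$ vanishes there, hence they do not contribute to the linear part) and $\mathrm{m}_{\W_0}(\fol_0)$ is unchanged. Consequently, in the new coordinates $\fol_0$ is described by a vector field of the form (\ref{champ1}) whose matrix (\ref{matr}) has $p_{10}\equiv p_{11}\equiv p_{21}\equiv0$ and nonzero lower-left entry $\tilde p_{20}=p_{20}/p_{10}\not\equiv0$, with higher-order terms still of multiplicity $\ge2$ along $\W_0$ --- precisely the hypotheses of Proposition \ref{pro00}.

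Finally, I would invoke Proposition \ref{pro00}. A biholomorphism fixing the center $\W_0$ lifts to every stage of the tower $\{\pi_i,\M_i,\W_i,\fol_i,\E_i\}$ and carries a blow-up sequence of $\fol_0$ with $\W_i$ homeomorphic to $\W_{i-1}$ and $\pi_i(\W_i)=\W_{i-1}$ to such a sequence for the transformed foliation, matching the homeomorphic curves stage by stage. Since being an elementary component is a coordinate-independent notion (Definition \ref{def2} together with the same invariance Proposition), Proposition \ref{pro00} yields a $k\in\N$ such that $\W_i$ is elementary for $i\ge k$, and the conclusion transfers back to the original $\fol_0$.

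The main obstacle will be the locus $\{p_{10}=0\}$, where the conjugation degenerates: there the normalizing matrix $\begin{pmatrix}1&-p_{11}\\0&p_{10}\end{pmatrix}$, of determinant $p_{10}$, is no longer invertible and $\tilde p_{20}=p_{20}/p_{10}$ acquires poles. Since $p_{10}$ is a nonzero function of $z_3$ its zero set is discrete, so the reduction is valid on a dense open subset of a neighborhood of $\W_0$; one must check that excising these finitely many fibers is harmless, which is exactly what the clause ``for almost all points of $\W_i$'' permits. A secondary technical point, handled as in the proofs of Propositions \ref{propSS} and \ref{pro00} by working chart by chart, is to clear denominators so that the transformed field is genuinely holomorphic before running the blow-up computation.
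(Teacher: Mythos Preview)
Your approach is correct but takes a genuinely different route from the paper. You normalize at stage~$0$: a $z_3$-dependent linear change brings the nilpotent matrix $\mathbf{A}_{X_0}|_{\W_0}$ into Jordan form $\begin{pmatrix}0&0\\ \tilde p_{20}&0\end{pmatrix}$ and you then invoke Proposition~\ref{pro00} directly, using functoriality of blow-ups under biholomorphisms preserving the center to transport the conclusion back. The paper instead first performs the blow-up, writes $p_{i0}=\varphi(z_3)p_{i1}$ (using $p_{10}\not\equiv0$), locates the unique non-elementary curve $\W_1=\{u_1=u_2+\varphi(u_3)=0\}$, straightens it, and then carries out an explicit case analysis on $\mathrm m_{\W_1}(\fol_1)$, on $\varphi'$, and on several auxiliary coefficients~$a_0,a_1,b_1,r_{30},\Delta$; it computes the eigenvalue ratios along the resulting curves ($-1/2$, $-1/3$, $-1/6$, etc.) case by case, and only in the residual subcases does it reduce to~(\ref{champ13}) (i.e.\ to Proposition~\ref{pro00}) by a coordinate swap.

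What each buys: your argument is much shorter and exhibits the proposition as a corollary of Proposition~\ref{pro00} via Jordan normal form, at the cost of working with a meromorphic conjugation (the determinant $p_{10}$ and $\tilde p_{20}=p_{20}/p_{10}$ force you to excise the discrete locus $\{p_{10}=0\}$ and clear denominators, as you note). The paper's route is longer but stays holomorphic throughout, and as a by-product records the precise eigenvalues of $\fol_i$ along each $\W_j^{(i)}$ appearing in the first few blow-ups --- information that feeds into the normal-form description following Theorem~\ref{theoremA}. Both approaches legitimately discard a finite set of fibers (you at $p_{10}=0$, the paper at $p_{11}=0$), which is exactly what the ``almost all points of $\W_i$'' clause in Definition~\ref{def2} permits.
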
 . 
\begin{proof}
From (\ref{champ1}), we get  
\begin{equation}\label{champ50}
X_{0}=\dps \sum_{i=1}^{3}\bigg(z_1 p_{i0}+z_2p_{i1} + P_i(z)\bigg)\frac{\partial}{\partial z_i},
\end{equation} where
$$P_i(z)=\sum_{j=0}^{m_i}z_1^{m_i-j}z_2^jP_{ij}(z),\quad m_i \ge 2.$$

Under these conditions, since $tr(\mathbf{A}_{X_0}|_{\W_0})\equiv
det(\mathbf{A}_{X_0}|_{\W_0})equiv0$ there exists a holomorphic
function $\varphi(z_3)$ such that
$p_{i0}(z_3)=\varphi(z_3)p_{i1}(z_3)$ for $i=1,2$, which results
\begin{equation}
\mathbf{A}_{X_0}|_{\W_0} = \begin{pmatrix} \varphi(z_3)p_{11}(z_3)&p_{11}(z_3) \cr
                                  -\varphi^2(z_3) p_{11}(z_3)&- \varphi(z_3)p_{11}(z_3)\end{pmatrix}.
                                   \end{equation}
In the chart $(({U_1})_1,\sigma_1(u))$, the foliation $\fol_1$ is
described by the following vector field
\begin{flalign}\label{champ5}
X_1&=\dps u_1\bigg((u_2+\varphi(z_3))p_{11}+u_1Q_1\bigg)\frac{\partial}{\partial u_1} -
\bigg((u_2+\varphi(z_3))^2p_{11}-u_1(Q_2-u_2Q_1)\bigg)\frac{\partial}{\partial u_2}+\cr &+ u_1(p_{30} +u_2p_{31}+u_1Q_3)\frac{\partial}{\partial
u_3}
\end{flalign}
where 
$$Q_i(u)=u_1^{m_i-2}\sum_{j=0}^{m_i}u_2^jP_{ij}(u_1,u_1u_2,u_3).$$

In this chart, there is the only non-elementary curve
$\W_1$  which is defined by Equations $u_1=u_2+\varphi(u_3)=0$ and has multiplicity
equal to 2. Just as it was done
in the Proposition \ref{pro00}, from now on we only consider the
fibers $\pi_1^{-1}(0,0,z_3)$ such that $p_{11}(z_3)\ne0$. In this coordinate system  $(v_1,v_2,v_3)=F(u)=(u_1,u_2+\varphi(u_3),u_3)$,  the foliation $\fol_1$  is described by the vector field
\begin{flalign}
\label{champ6} Y_{1}&=\dps v_1\big(v_2 p_{11} + v_1R_1\big)\frac{\partial}{\partial v_1} -
\bigg( p_{11}v_2^2 - v_1\big(G_2 - v_2R_1+\varphi^{\prime}(v_3)( r_{30}+v_2r_{31})\big)\bigg)\frac{\partial}{\partial v_2}+\cr  &+v_1\big( r_{30}+v_2r_{31}+v_1R_3\big)\frac{\partial}{\partial
v_3},
\end{flalign}
where $R_i(v)=Q_i\circ F^{-1}(v)$ and $G_2(v) = R_2(v)+\varphi(v_3)R_1(v) + v_1\varphi^{\prime}(v_3)R_3(v)$. 

Initially, we will consider the case where $\varphi$ is not a constant function and  and $\mbox{m}_{\W_1}(\fol_1)=1$. Hence,  under these conditions, $r_{30}\not\equiv0$ or $a_0(v_3) := G_2(0,0,v_3)+\varphi^{\prime}(v_3)r_{30}(v_3)\not\equiv0$. 

Let us consider $a_0 \not\equiv 0$ which results $\W_1$ is of type III. So, in the chart $\big((U_2)_2,  v=\sigma_2(t)\big)$, the singular set of $\fol_2$ contains the only curve $\W_2=\{t\in(U_2)_2|t_1=t_2=0\}$ that is homeomorphic to $\W_1$. The curve $\W_2$ has multiplicity equal to 2 and $\mbox{m}_{\W_2}(\fol_2)=2$.  The singular set of $\fol_3$ contains three elementary homeomorphic curves to $\W_2$. In other words, in the chart $(U_3)_1,t=\sigma_1(x))$, there are curves $\W_1^{(3)}=\{x\in(U_3)_1| x_1=x_2=0\}$,  with eingevalues of $\fol_3$ at $\W_1^{(3)}$ are $\lambda_{11}^{(3)}=-a_0(x_3)$ and $\lambda_{11}^{(3)}=a_0(x_3)$; and $$\W_2^{(3)}=\left\{x\in(U_3)_1| x_1=x_2-\frac{2a_0(x_3)}{3p_{11}(x_3)}=0\right\}$$ with eingevalues of $\fol_3$ at $\W_2^{(3)}$ are $\lambda_{21}^{(3)}= a_0(x_3)/3$ and $\lambda_{22}^{(3)}=-2a_0(x_3)$. In the chart $(U_3)_2,t=\sigma_2(y))$, there is the curve $\W_3^{(3)}=\{y\in(U_3)_2| y_1=y_2=0\}$, with eingevalues of $\fol_3$ at $\W_3^{(3)}$ are $\lambda_{31}^{(3)}=3p_{11}(y_3)$ and $\lambda_{32}^{(3)}=-p_{11}(y_3)$.

Now, we will consider $a_0\equiv0$ and $r_{30}\not\equiv0$ which result $\W_1$ is of type I. Therefore, the singular set of $\fol_2$ contains the only curve $\W_2=\{y\in(U_2)_2|y_1=y_2=0\}$, $\sigma_2(y)=v$, which is homeomorphic to $\W_1$. The eigenvalues of $\fol_2$ at $\W_2$ are $\lambda_{21}^{(2)}=2p_{11}(y_3)$ and $\lambda_{21}^{(2)}=-p_{11}(y_3)$ whose ratio is a negative rational number for almost every point of $\W_2$.

Thus, we will consider $\mbox{m}_{\W_1}(\fol_1)=2$ which results $r_{30}\equiv0$ and $a_0\equiv0$ in (\ref{champ6}). Consequently, the singular set of $\fol_2$ contains three elementary homeomorphic curves to $\W_1$. In the chart $\big((U_2)_1,v=\sigma_1(x)\big)$, the singular set of $\fol_2$ is defined by the following Equations
\begin{equation}\label{for911}
x_1=-2x_2^2p_{11}(x_3) + x_2b_1(x_3) +a_1(x_3)=0
\end{equation}
where 
$$ a_1(x_3)=\frac{\partial G_2}{\partial v_1}(0,0,x_3)\mbox{ and } b_1(x_3)=\frac{\partial G_2}{\partial v_2}(0,0,x_3)-2R_1(0,0,v_3) + \varphi^{\prime}(x_3)r_{31}(x_3).$$

Hence, if $\Delta=b_1^2(x_3) + 8p_{11}(x_3)a_1(x_3)\not\equiv0$ then in this chart there are two curves which are defined as follows $\W_1^{(2)}=\{x\in(U_2)_1| x_1 =x_2 -\psi_1(x_3)=0\}$ and $\W_2^{(2)}=\{x\in(U_2)_1| x_1 =x_2 -\psi_2(x_3)=0\}$ where

$$ \phi_i(x_3) = \frac{b_1(x_3) -(-1)^i \sqrt{\Delta}}{4p_{11}(x_3)}.$$
The third curve is $\W_3^{(2)}=\{y\in(U_2)_2| y_1 =y_2 =0\}$, with $\sigma_2(y)=v$. The eigenvalues of $\fol_2$ along  $\W_1^{(2)}$ are $\lambda_{11}^{(2)}=\phi_1(x_3)p_{11}(x_3)+ R_1(0,0,x_3)$ and $\lambda_{12}^{(2)}=\sqrt{\Delta}$. The eigenvalues of $\fol_2$ along  $\W_2^{(2)}$ are $\lambda_{21}^{(2)}=\phi_2(x_3)p_{11}(x_3) R_1(0,0,x_3)$ and $\lambda_{22}^{(2)}=\sqrt{\Delta}$. And the eigenvalues of $\fol_2$ along  $\W_3^{(2)}$ are $\lambda_{31}^{(2)}=2p_{11}(x_3)$ and $\lambda_{12}^{(2)}=-p_{11}(x_3)$.

If $\Delta\equiv0$ and $b_1\not\equiv0$ in Equation (\ref{for911}) then the curve $\W_1^{(2)}$ has multiplicity equal to 2. Hence, $\W_1^{(2)}$ is an elementar component if $\lambda_{11}^{(2)}=\phi_1(x_3)p_{11}(x_3)+ R_1(0,0,x_3)\not\equiv0$. Otherwise, it is enough to make this change of variables $(t_1,t_2,t_3)=(x_2 - \psi_1(x_3),x_1,x_3$ which results in the vector field (\ref{champ6}) being transformed into the vector field (\ref{for913}). The proof then proceeds similarly to that of Proposition (\ref{pro00}).

In an exact similar way, if $\varphi$ is not a constant function or $\Delta\equiv b_1\equiv0$, the vector field (\ref{champ6}) can also be transformed into the vector field (\ref{champ13}) by changing the variables $(v_1,v_2,v_3)$ to $(t_2,t_1,t_3)$. So, we finish the proof of the  Proposition.

\end{proof}

\subsection{Proof of Theorem \ref{theoremA} }
\begin{proof}
Let $\{\pi_i, \M_i,\W_i,\fol_i,\E_i\}$ be a blow-up sequence such that $\M_0=\P^3$ and $\W_i$ is homeomorphic to $\W_{i-1}$ with $\pi_i({\W_i})=\W_{i-1}$ for all $i\ge 1$. From Theorem \ref{theoremB}, there exists $k_0\in \N$ such that $m_{\E_i}(\pi_i^*\fol_{i-1})=0$ for $i\ge k_0$. Hence, $m_{\W_i}(\fol_i)=1$ and $\W_i$ is of type III for some index $i$. From Propostions \ref{proed}, \ref{pro1an}, \ref{pro00} and \ref{pro01}, we conclude the proof of Theorem \ref{theoremA}. Thus, $\W_i$ is an elementary component for $i\ge k$, for some natural number $k$, and for almost all points of $\W_i$. If follows from \cite[Proposition 2.20]{Cascini-Spicer} that $(\fol_i)=1$ is   generically log canonical along $\W_i$. 
\end{proof}
\begin{example}\label{SSe}({\bf F. Sanz and F. Sancho's example}) \rm  Let us consider the holomorphic foliation $\fol_0$ defined on $\M_0=\P^3$
described in the affine open set $U_3=\{[\xi]\in\P_3, \xi_3\ne 0\}$ by
the following vector field
$$X_{0}= \dps  z_1^2\frac{\partial}{\partial z_1}+(-\alpha z_1z_2+z_1z_3)\frac{\partial}{\partial z_2}+(-\lambda z_1+z_2-\beta z_1z_3)\frac{\partial}{\partial z_3},$$
where $z_i=\frac{\xi_{i-1}}{\xi_3}$, $\alpha,\beta\in\R_{\ge0}$ and
$\lambda\in\R_{>0}$. Thus, we have that
$$\sing(\fol_0)=\W_0\cup\W_1\cup \{p_1\}$$
where $\W_0:=\{\xi_0=\xi_1=0\}$, $\W_1:\{\xi_0=\xi_3=0\}$
and $p_1=[1:0:0:0]$. Let $\pi_1:\M_1\to\M_0$ be the blowup of $\P^3$
 along  $\W_0$ being $\E_1$ and $\fol_1$ as in the previous
example. Thus, the curve $\W_0$ is type III. The singular set of $\fol_1$
contains only one curve which is homeomorphic to $\W_0$  but with
multiplicity equal to 2. See Theorem \ref{theoremt}.

In fact, in the chart $((U_1)_2,z=\sigma_2(v))$, with relations $z=\sigma_2(v)=(v_1v_2,v_2,v_3)$, the foliation $\fol_1$ is
described by the following vector field
$$ Y_1=\big(v_2v_1^2-v_1^2(-\alpha v_2+v_3)\big)\frac{\partial}{\partial v_1}+v_2(-\alpha v_1v_2+v_1v_3)\frac{\partial}{\partial v_2}+v_2\big(-v_1(\lambda+\beta v_3)+1\big)\frac{\partial}{\partial v_3}.$$

It is not difficult to see that the curve defined as
$\W_2^{(1)}=\{v\in (U_1)_2|v_1=v_2=0\}$ is such a curve. In the chart
$((U_1)_1,\sigma_1(u))$, the foliation $\fol_1$ is
described by the vector field
$$X_{1}=u_1^2\frac{\partial}{\partial u_1}+(u_3-u_1u_2(1+\alpha))\frac{\partial}{\partial u_2}+u_1\big(-\lambda-\beta v_3 + u_2)\frac{\partial}{\partial u_3}.$$

However, the singular set of $\fol_1$ contains the curve
$\W_1^{(1)}=\{u\in (U_1)_2|u_1=u_3=0\}$ which is  homeomorphic   to $\P^1$. F. Sanz and F. Sancho showed that the vector field $X_1$ is invariant by a blow-up centered at $\W_1^{(1)}$ which results that $X_0$ cannot be
desingularized by blow-ups along such curves. See \cite{FSFS} for more details.

Let $\pi_2:\M_2\to \M_1$ be the blowup of $\M_1$ along  $\W_2^{(1)}$ being $\E_2$ and $\fol_2$ the
exceptional divisor and the strict transform foliation, respectively. Thus,
the curve $\W_2^{(1)}$ is of type I and
$\mbox{m}_{\E_2}(\pi_2^*\fol_1)=1$. In the chart $((U_2)_1,\sigma_1(w)=v)$, the foliation $\fol_2$ is described by
the vector field
\begin{align*}
 X_2=&\dps
w_1\big(-w_3+(1+\alpha)w_1w_2\big)\frac{\partial}{\partial
w_1}+w_2\big(2w_3-(1+2\alpha)w_1w_2\big)\frac{\partial}{\partial
w_2}+\cr & w_2\big(1-w_1(\lambda
+\beta)\big)\frac{\partial}{\partial w_3}.
\end{align*}

Except for $w_3=0$, the curve
$\W_1^{(2)}=\{w\in(U_2)_1|w_1=w_2=0\}$ is an elementary curve of
$\sing(\fol_2)$ as the eigenvalues of $\mathbf{A}_{X_2}|_{\W_1^{(2)}}$ are
$\lambda_{11}^{(2)}(w_3)=-w_3$ and $\lambda_{21}^{(2)}(w_3)=2w_3$.
Furthermore, $\lambda_{11}^{(2)}/\lambda_{21}^{(2)}=-1/2\not\in \Q_+$
for almost all $w\in\W_1^{(1)}$. The exception occurs precisely at the
intersection point with the curve $\pi_2^{-1}\big(\W_1^{(1)}\big)$.

\end{example}

\end{document}